   \newtheorem{thm}{Theorem}
\newtheorem*{mainthm}{The Main Theorem}
      \newtheorem{prop}[thm]{Proposition}
      \newtheorem{cor}[thm]{Corollary}
      \newtheorem{lemma}[thm]{Lemma}
  \theoremstyle{definition}
\newcommand{\dd}{\mathrm{d}}
\newcommand{\vect}[1]{\vec{#1}}
\newcommand{\super}{\mathsf{sf}}
\newcommand{\define}[1]{\textbf{#1}}
\newcommand{\R}{\mathbb{R}}
\newcommand{\Z}{\mathbb{Z}}
\newcommand{\N}{\mathbb{N}}
\newcommand{\ball}{B_R^n}
\newcommand{\sphere}{S^{n-1}_R}
\newcommand{\dnormal}{\frac{\partial}{\partial\nu}}
\newcommand{\dnormali}[1]{\frac{\partial^#1}{\partial\nu^#1}}
\renewcommand{\vec}{\mathbf}
\newcommand{\psit}{\tau}
\newcommand{\deltap}{\delta}
\newcommand{\one}{\mathbf{1}}
\newcommand{\ring}{\mathcal{R}}
\newcommand{\tempdim}{\kappa}
\DeclareMathOperator{\vol}{Vol}
\LetLtxMacro\orgvdots\vdots
\LetLtxMacro\orgddots\ddots
\DeclareRobustCommand\vdots{%
  \mathpalette\@vdots{}%
}
\newcommand*{\@vdots}[2]{%
  \sbox0{$#1\cdotp\cdotp\cdotp\m@th$}%
  \sbox2{$#1.\m@th$}%
  \vbox{%
    \dimen@=\wd0 %
    \advance\dimen@ -3\ht2 %
    \kern.5\dimen@
    \dimen@=\wd2 %
    \advance\dimen@ -\ht2 %
    \dimen2=\wd0 %
    \advance\dimen2 -\dimen@
    \vbox to \dimen2{%
      \offinterlineskip
      \copy2 \vfill\copy2 \vfill\copy2 %
    }%
  }%
}
\DeclareRobustCommand\ddots{%
  \mathinner{%
    \mathpalette\@ddots{}%
    \mkern\thinmuskip
  }%
}
\newcommand*{\@ddots}[2]{%
  \sbox0{$#1\cdotp\cdotp\cdotp\m@th$}%
  \sbox2{$#1.\m@th$}%
  \vbox{%
    \dimen@=\wd0 %
    \advance\dimen@ -3\ht2 %
    \kern.5\dimen@
    \dimen@=\wd2 %
    \advance\dimen@ -\ht2 %
    \dimen2=\wd0 %
    \advance\dimen2 -\dimen@
    \vbox to \dimen2{%
      \offinterlineskip
      \hbox{$#1\mathpunct{.}\m@th$}%
      \vfill
      \hbox{$#1\mathpunct{\kern\wd2}\mathpunct{.}\m@th$}%
      \vfill
      \hbox{$#1\mathpunct{\kern\wd2}\mathpunct{\kern\wd2}\mathpunct{.}\m@th$}%
    }%
  }%
}
\DeclareMathOperator{\interior}{int}
\newcommand{\e}{\mathrm{e}}
\begin{document}

\begin{frontmatter}[classification=text]


\author[simon]{Simon Willerton}

\begin{abstract}
Magnitude is an invariant of metric spaces with origins in category theory.  Using potential theoretic methods, Barcel\'o and Carbery gave an algorithm for calculating the magnitude of any odd dimensional ball in Euclidean space, and they proved that it was a rational function of the radius of the ball.  In this paper an explicit formula is given for the magnitude of each odd dimensional ball in terms of a ratio of Hankel determinants of reverse Bessel polynomials.  This is done by finding a distribution on the ball which solves the weight equations.  Using Schr\"oder paths and a continued fraction expansion for the generating function of the reverse Bessel polynomials, combinatorial formulae are given for the numerator and denominator of the magnitude of each odd dimensional ball.  These formulae are then used to prove facts about the magnitude such as its asymptotic behaviour as the radius of the ball grows.
\end{abstract}
\end{frontmatter}

\section{Introduction}
\subsection{Background}
Magnitude was originally introduced by Leinster~\cite{Leinster:Magnitude} as a measure of the `size' of finite metric spaces by formal analogy with his definition of Euler characteristic of finite categories.  It soon became clear~\cite{LeinsterWillerton:AsymptoticMagnitude} that this definition could be extended to many compact infinite metric spaces, such as compact subsets of Euclidean space and this was formalized by Meckes~\cite{Meckes:PositiveDefinite}.  (See Section~\ref{sec:Weightings} for more details on the definition of magnitude.)  For instance, the magnitude of the line segment of length $2R$ --- also known as the $1$-ball of radius $R$ --- is given by $R+1$.

Despite its abstract origins, magnitude has been found to have connections with many areas of mathematics.  For instance, in~\cite{LeinsterCobbold:Diversity} deep connections with biodiversity measurement were given; in~\cite{Willerton:SpheresSurfaces} differential geometric methods were used to relate magnitude to curvature; in~\cite{Meckes:MagnitudeDimensions} analytic methods were used to relate magnitude to Minkowski dimension and potential theory; in~\cite{HepworthWillerton:MagnitudeGraphHomology} homological algebraic methods were used to relate magnitude to graph theory and categorification.

Various computations and calculations led to questions about whether this Euler characteristic-like invariant satisfied an inclusion-exclusion principle; such a thing was certainly satisfied in dimension $1$, but in higher dimension it seemed as though it might work only asymptotically as the metric was scaled up.  Numerical computations were inconclusive and the exact magnitude of any Euclidean subset of dimension greater than $1$ was unknown.

In a recent paper~\cite{BarceloCarbery}, Barcel\'o and Carbery showed, amongst other things, that the magnitude of any odd dimensional ball could be calculated using ideas of potential theory introduced by Meckes~\cite{Meckes:MagnitudeDimensions}.  In particular, they showed how the magnitude potential of such a ball is a solution to a boundary value problem which they were able to reduce to a linear system, and then they applied a recursive method to compute the magnitude of the odd ball, which for fixed dimension is seen to be a rational function of the radius of the ball, and this rational function has integer coefficients.

Barcel\'o and Carbery calculated the following by hand using their algorithm, where $\left|B^n_R\right|$ denotes the magnitude of the $n$-dimensional ball of radius~$R$.
\begin{align*}
\left|B^ 1 _R\right|
&=
R+1
\\
\left|B^ 3 _R\right|
&=
\frac{R^{3} + 6R^{2} + 12  R + 6}{3!}
\\
\left|B^ 5 _R\right|
&=
\frac{R^{6} + 18 \, R^{5} + 135 \, R^{4} + 525 \, R^{3} + 1080 \, R^{2} + 1080 \, R + 360}{5! \, {\left(R + 3\right)}}
\\
\left|B^ 7 _R\right|
&=
\frac{  R^{10} +
40  R^{9} + 720  R^{8} +  \dots 
+ 1814400  R^{2} + 1209600  R + 302400
}{7!\, {\left(R^{3} + 12  R^{2} + 48  R + 60\right)}}
\end{align*}
Barcel\'o and Carbery observed, but did not prove, that the coefficients are non-negative; one might hope that if they are positive integer coefficients then they are actually counting something.  They also claimed that they could bound the degree of the denominator by  $\frac{3n^2-2n+7}{8}$.  We shall prove stronger statements and give combinatorial formulae for the coefficients of the numerator and denominator.  They used Fourier analysis to prove that $\left|B^ n _R\right|\to 1$ as $R\to 0$ and that $\left|B^ n _R\right|= R^n/n! +O(R^{n-1})$ as $R\to\infty$; both of these statements will be consequences of our formulae for the numerator and denominator. 

These calculations of Barcel\'o and Carbery showed that in general the magnitude does not satisfy an inclusion-exclusion principle.  More recent work of Gimperlein and Goffeng~\cite{GimperleinGoffeng:MagnitudeFunction} showed that domains in a fixed Euclidean space  \emph{asymptotically} satisfy a version of the inclusion-exclusion principle.

Barcel\'o and Carbery essentially solved a differential equation to find the magnitude; we shall essentially solve an integral equation.  A key ingredient will be what is basically a special function identity, which we shall prove in Section~\ref{section:KeyIntegral} using PDE methods inspired by~\cite{BarceloCarbery}.

\subsection{The main results}
There are three main goals of this paper: firstly to show that the magnitude of an odd-dimensional ball can be calculated by solving the weight equation, without necessarily finding a potential; secondly, to give simple and explicit formulae for the numerator and denominator of the magnitude $\bigl|B^{2p+1}_R\bigr|$ in terms of `Hankel determinants' of the sequence $\left(\chi_i(R)\right)_{i=0}^\infty$ of `reverse Bessel polynomials'; and thirdly to give combinatorial expressions for these Hankel determinants and thus prove properties of the magnitude of odd dimensional balls.
Before stating the main results precisely we should define the reverse Bessel polynomials and the notion of Hankel determinant.

Let $\left(\chi_i(R)\right)_{i=0}^\infty$ denote the sequence of \define{reverse Bessel polynomials}, so that $\chi_i(R)$ is a degree $i$ integer polynomial in $R$.  The sequence begins as follows:
\begin{align*}
\chi_0(R)&=1;\\
\chi_1(R)&=R;\\
\chi_2(R)&=R^2+R;\\
\chi_3(R)&=R^3+3R^2+3R;\\
\chi_4(R)&=R^{4} + 6 R^{3} + 15 R^{2} + 15 R.
\end{align*}
There are many ways to define this sequence, but we can take the recursion relation as a definition:
\begin{equation}
  \label{eq:ChiRecursion}
  \chi_{i+2}(R)=R^2\chi_i(R)+(2i+1)\chi_{i+1}(R).
\end{equation}
A standard reference for these polynomials is Grosswald~\cite{Grosswald:BesselPolynomials}, but Wikipedia~\cite{wikipedia:BesselPolynomial} is as good a place as any to learn about them;   note that the indexing used can vary from author to author: here, we have taken the version which forms a Sheffer sequence.  These reverse Bessel polynomials are related to the functions $\psi_i\colon(0,\infty)\to \R$ of Barcel\'o and Carbery~\cite{BarceloCarbery} by $\chi_i(R)=e^RR^{2i}\psi_i(R)$ --- see Proposition~\ref{prop:chiAndpsi} --- and are related to the modified spherical Bessel functions~\cite[Section~10]{NIST:DLMF} by $\chi_i(R)=\frac{2}{\pi} e^R R^{i+1}k_{i-1}(R)$.

Now let us see what a Hankel determinant is.  If $(\alpha_i)_{i=0}^\infty$ is a sequence of elements in a commutative ring, then, for $p=0,1,2,\dots$, the $p$th \define{Hankel determinant} of the sequence is the determinant
 \(\det[\alpha_{i+j}]_{i,j=0}^p\), that is the determinant of the matrix with constant anti-diagonals:
\[
\begin{vmatrix}
\alpha_0&\alpha_1&\alpha_2&\dots&\alpha_p\\
\alpha_1&\alpha_2&\dots&\dots&\alpha_{p+1}\\
\alpha_2&\dots&&\dots&\alpha_{p+2}\\
\vdots&&&&\vdots\\
\alpha_p&\dots&&\dots&\alpha_{2p}
\end{vmatrix}.
\]

The main results on magnitude that will be proved in this paper can be collected into the following statement.
\begin{mainthm}
%
Suppose that $n=2p+1$ is an odd positive integer.  The magnitude $\left|B^n_R\right|$ of the $n$-ball with radius $R$ can be expressed as the ratio of integer polynomials in the following way:
\[\left|B^n_R\right|
=\frac{\det[\chi_{i+j+2}(R)]^{p}_{i,j=0}}
{n!\,R\det[\chi_{i+j}(R)]^{p}_{i,j=0}},
\]
where $\chi_i(R)$ is the $i$th reverse Bessel polynomial.

There is a common monomial factor  in the above numerator and denominator.  Defining the $k$th superfactorial via $\super(k):=\prod_{i=0}^k i!$, we can write 
\(\left|B^n_R\right|
=\frac{N_p(R)}
{n!\,D_p(R)},
\)
where
\[N_p(R):= \frac{\det[\chi_{i+j+2}(R)]^{p}_{i,j=0}}{\super(p)R^{p+1}};
\qquad 
D_p(R):= \frac{\det[\chi_{i+j}(R)]^{p}_{i,j=0}}{\super(p)R^{p}}.\]

These polynomials have combinatorial expressions as sums of weights of collections of disjoint Schr\"oder paths (the terminology is explained in Section~\ref{section:SchroderPaths}):
\[
N_p(R)=\sum_{\sigma\in X_{p+1}}W_2(\sigma, R),\qquad
D_p(R)=\sum_{\sigma\in X_{p-1}}W_0(\sigma, R),
\]
where $X_k$ is a certain set of collections of disjoint Schr\"oder paths.

Both $N_p(R)$ and $D_p(R)$ are monic integer polynomials with positive coefficients and, in particular, non-zero constant terms. The constant terms satisfy $N_p(0)=n!\,D_p(0)$.  Thus $|B^n_R|\to 1$ as $R\to 0$.

The degrees of the polynomials are given by the following:
\[\deg(N_p(R))= (p+1)(p+2)/2;\qquad \deg(D_p(R))= (p-1)p/2.\]
The leading terms are as follows, for $\tempdim := \frac{1}{2}(p+1)(p+2)$:
\begin{align*}
  N_p(R)&= R^{\tempdim} +\tfrac{(p+1)^2(p+2)}{2} R^{\tempdim-1} + \tfrac{p(p+1)^3(p+2)(p+3)}{8} R^{\tempdim-2}+O(R^{\tempdim-3}),\\
  D_p(R)&=
  R^{\tempdim-2p-1} +\tfrac{(p-1)p(p+1)}{2} R^{\tempdim-2p-2} + \tfrac{(p-2)(p-1)p(p+1)^3}{8} R^{\tempdim-2p-3}+O(R^{\tempdim-2p-4}).
\end{align*}
Thus there is the following asymptotic behaviour of magnitude:
\[
  \left|B_R^n\right|=\frac{1}{n!}\left(R^n + \tfrac{n(n+1)}{2}R^{n-1}+ \tfrac{(n-1)n (n+1)^2}{8}R^{n-2}\right) + O(R^{n-3})
  \quad\text{as } R\to \infty.
\]
\end{mainthm}

\begin{proof}[Sketch proof.]  The rest of the paper will flesh out the following strategy.
\begin{enumerate}
\item Guess the form of the weighting distribution on the ball $B_R^n$.
\item Give an expression for   $\int_{S_R^{2p}}
    e^{-\left|\vec{x}-\vec{s}\right|}\,\mathrm{d}\vec{x}$ in terms of reverse Bessel polynomials.
\item Use the above two items to solve the weight equations for the ball.
\item Use Cramer's Rule to express the magnitude as a ratio of determinants.
\item Show that these determinants are equal to the Hankel determinants above.
\item Use the theory of continued fractions to give combinatorial expressions for the Hankel determinants in terms of counting weighted Schr\"oder paths.
\item Use these combinatorial expressions to give the degrees of the polynomials, the positivity of the coefficients and the formulae for the leading and constant terms. \qedhere
\end{enumerate}
\end{proof}
You are invited to perform the exercise of calculating the Hankel determinants in the theorem when $p=1$ and $p=2$ and verifying that the correct formulae for the magnitude are obtained.  (Many of the calculations in this paper are verified in the SageMath notebook available as~\cite{Willerton:SageWorksheet}.)

By way of comparison with the bound for the degree of the denominator of $\frac{3n^2-2n+7}{8}$  from~\cite{BarceloCarbery} mentioned above,  note that we have proved  that the degree of the denominator is bounded by $\frac{n^2-4n+3}{8}$.  This  bound is strict in dimensions that have been calculated numerically, i.e.~up to $n=45$.   This strictness is equivalent, of course, to the numerator and denominator having no common factor in those dimensions: in fact, in those dimensions the numerator and denominator polynomials are, according to SageMath, all irreducible over~$\mathbb{Q}$.

From the above theorem we get the following corollary, which Barcel\'o and Carbery proved using Fourier theory~\cite[Theorem~1]{BarceloCarbery}.  (This does not necessarily hold for metric spaces that cannot be isometrically embedded in Euclidean space~\cite[Example~2.2.8]{Leinster:Magnitude}.)
\begin{cor}
If $X$ is a non-empty, compact subset of some Euclidean space and $tX$ denotes $X$ scaled by a factor of $t$ then
\[
  \left|tX\right| \to 1\quad\text{as }t\to 0.
\]
\end{cor}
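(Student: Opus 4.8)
The plan is to sandwich $tX$ between a single point and a ball of \emph{odd} dimension and then invoke the Main Theorem. Since magnitude depends only on the metric space, I would first embed the Euclidean space containing $X$ isometrically into $\R^n$ for some odd $n$ --- adding one dimension if necessary --- and so assume $X\subseteq\R^n$ with $n$ odd. As $X$ is compact it lies in some ball $B^n_R$, and scaling by $t$ gives
\[
\{\vec{x}_0\}\ \subseteq\ tX\ \subseteq\ B^n_{tR}
\]
for any chosen basepoint $\vec{x}_0\in X$ and every $t>0$.

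The one substantive ingredient I would use is that magnitude is monotone under inclusion for compact subsets of Euclidean space: $A\subseteq Z$ implies $\left|A\right|\le\left|Z\right|$. I would obtain this from Meckes' variational description of the magnitude of a compact positive definite metric space --- compact subsets of $\R^n$ being of negative type, hence positive definite~\cite{Meckes:PositiveDefinite} --- which expresses $\left|Z\right|$ as the supremum of $\mu(Z)^2\big/\iint e^{-\left|\vec{x}-\vec{y}\right|}\,\dd\mu(\vec{x})\,\dd\mu(\vec{y})$ over nonzero finite signed Borel measures $\mu$ on $Z$~\cite{Meckes:MagnitudeDimensions}. Any such $\mu$ supported on a closed subset $A$ is simultaneously a competitor for $A$ and for $Z$ with the same value of the quotient, so the supremum over $Z$ is at least that over $A$; in particular $\left|tX\right|\ge\left|\{\vec{x}_0\}\right|=1$ as well.

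Putting the pieces together, for every $t>0$ one has $1\le\left|tX\right|\le\left|B^n_{tR}\right|$, and the Main Theorem gives $\left|B^n_{tR}\right|\to 1$ as $t\to 0$, so the squeeze theorem yields $\left|tX\right|\to 1$. I do not anticipate a genuine difficulty: given the Main Theorem the corollary is soft. The only point demanding care --- and precisely the reason the statement can fail for metric spaces not embeddable in Euclidean space, cf.~\cite[Example~2.2.8]{Leinster:Magnitude} --- is the monotonicity step, which relies on negative type both to make the variational formula available and to guarantee that every space involved is positive definite, so that its magnitude is well defined in the first place.
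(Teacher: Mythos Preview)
Your proof is essentially identical to the paper's: sandwich $tX$ between a singleton and an odd-dimensional ball, invoke monotonicity of magnitude for subsets of Euclidean space (the paper simply cites~\cite[Corollary~2.4.4]{Leinster:Magnitude} rather than deriving it from Meckes' variational formula), and squeeze using the Main Theorem. One small slip: the left inclusion should read $\{t\vec{x}_0\}\subseteq tX$, not $\{\vec{x}_0\}\subseteq tX$, since $\vec{x}_0\in X$ does not place $\vec{x}_0$ in $tX$; this is harmless, as any singleton has magnitude~$1$.
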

\begin{proof}
As $X$ is non-empty, we have some $x\in X$.  As $X\subset\R^e$ for some $e$ we can isometrically embed $X$ in $\R^n$ for some odd $n$.  As $X$ is compact, it is bounded, so there is some $R$ with $X\subset B^n_R$.  Thus $\{tx\}\subset tX\subset tB^n_R=B^n_{tR}$.  By the monotonicity of magnitude for subsets of Euclidean space~\cite[Corollary~2.4.4]{Leinster:Magnitude} this means $1=\bigl|\{tx\}\bigr|\le \bigl|tX\bigr| \le \bigl|B^n_{tR}\bigr|$.  By the theorem above we have $\bigl|B^n_{tR}\bigr|\to 1$ as $t\to 0$ and the result follows from the Sandwich Rule.
\end{proof}

The rest of this introduction will describe how the Hankel determinant formula is obtained.   The details of proofs will be given in the body of the paper.  The combinatorial interpretation of the Hankel determinants and the remaining results about the magnitude are proved in 
Section~\ref{section:SchroderPaths}.


\subsection{Weightings and weight distributions}
\label{sec:Weightings}
In this section we will give the definition due to Meckes of the correct notion of weighting and weight equation on a convex subset of Euclidean space.  The strategy for finding the magnitude of an odd dimensional ball will be to `simply' solve the weight equation.  We will start with a reminder of the notion of weighting and weight equation on a finite metric space.

Magnitude was defined by Leinster on finite metric space as follows.  If $A$ is a finite metric space then a \define{weighting} on $A$ is a function $w\colon A\to \R$ such that
\[
 \sum _{x} w(x) e^{-\mathrm{d}(x,s)} =1\quad \text {for all }s\in A.
\]
This is called the \define{weight equation}.
If a weighting exists then the \define{magnitude} is defined to be the total weight, $\left |A\right | \coloneqq \sum_{x} w(x)$. This is independent of any choice of weighting.  A weighting is known to exist for every finite subset of Euclidean space.

Following calculations in \cite{LeinsterWillerton:AsymptoticMagnitude}, Meckes~\cite{Meckes:PositiveDefinite} showed that magnitude could be extended to a unique maximal lower semicontinuous function on the space of  compact `positive definite' spaces.  This class of metric spaces includes compact subsets of Euclidean space.   One way 
of calculating the magnitude of an infinite compact subset $X\subset \R^n$ of Euclidean space is by taking a sequence $(A_i)_{i=0}^\infty$ of finite subsets --- $A_i\subset X$ --- which tend to $X$ in the Hausdorff topology, then we have $|X|=\lim_i |A_i|$.

Taking a weighting on  $A_i$ for each $i$, gives a sequence of finite signed measures on $X$.  One might hope that they tend to a measure $\mu$ on $X$ which satisfies the obvious analogue of the weighting equation above, namely
\[
 \int_{x}  e^{-\mathrm{d}(x,s)} \mathrm{d}\mu(x)=1\quad \text {for all }s\in X,
\]
and that one could then calculate the magnitude $|X|$ as the total mass of $\mu$, ie.~$\int_X \mathrm{d}\mu$.  This will work for some spaces such as the closed interval~$B^1_R$, see~\cite{Willerton:SpheresSurfaces}.  Unfortunately, it does not work in general, as a sequence of finite signed measures which gives a convergent sequence when evaluated on any function does not  typically give rise to a signed measure.  Here is a simple example of that.

Define the sequence $(\mu _{i})_{i=0}^{\infty }$ of finitely supported signed measures on the real line $\mathbb{R}$ by $\mu _{i}\coloneqq i\delta _{0} - i\delta _{1/i}$, where $\delta_a$ is the Dirac delta measure supported at $a\in \R$. Then for any differentiable function $f$ we get the following convergence:
\[
\int _{\mathbb{R}} f \,\dd\mu _{i} = \frac{f(0)-f(1/i)}{1/i} \to f'(0)\qquad \text {as}\qquad i\to \infty .
\]
 But the functional $f\mapsto f'(0)$ is not represented by any signed measure. Rather it is a distribution.  
 
 We will need to use distributions as our limiting object of finite signed measures, and so use weight \emph{distributions} to generalize weight measures.
In general terms, a distribution on $\mathbb{R}^{n}$ is a linear functional on some suitable class of functions, say smooth and decaying appropriately to zero at $\infty $. We will describe some specific spaces of distributions below.  We will write $\langle w, f\rangle $ for the evaluation of a distribution $w$ on a function $f$.

Here are a few of examples of distributions.
\begin{enumerate}[(i)]
\item For each (appropriately integrable) function $g$ we have an associated distribution with $\langle g, f\rangle \coloneqq \int _{\vec{x}\in\mathbb{R}^{n}} g(\vec{x})f(\vec{x})\, \dd\vec{x}$.
\item For each signed measure $\mu $ we have an associated distribution with $\langle \mu , f\rangle \coloneqq \int _{\mathbb{R}^{n}} f \,\dd \mu $.
\item Generalizing the derivative mentioned above, for any cooriented, smooth, codimension one submanifold $\Sigma$ of $\mathbb{R}^{n}$, and $i\in \N$ we have the distribution $w_{i}$ given by
  \[
  \langle w_i, f\rangle \coloneqq \int _{\Sigma} \dnormali{i} f(\vec{x}) \,\dd\vec{x},
  \]
   where $\frac{\partial }{\partial \nu }$ means derivative in the normal direction to the submanifold.
\end{enumerate}

%

Meckes~\cite{Meckes:MagnitudeDimensions} showed that, for the magnitude of subsets of $\R^n$,  we need to consider distributions in the following \define{Bessel potential space}:
\[H^{-i}(\mathbb{R}^{n}) := 
 \left\{w \in \mathcal{S}'(\mathbb{R}^{n})\mid  (1+{\left\| \cdot \right\|}^2)^{-i/2}\hat w\in L^2(\mathbb{R}^{n})\right\},\]
where $\mathcal{S}'(\mathbb{R}^{n})$ is the space of tempered distributions and $\hat w$ is the Fourier transform of the distribution $w$.  

For non-analysts the definition of Bessel potential space might look a little intimidating, but we will not need to know what tempered distributions or the Fourier transform of a distribution are, and will really only need the theorem below together with the fact that certain specific distributions, similar to the examples above, live in a Bessel potential space.  These are all proved in Section~\ref{section:BesselPotential}.

\begin{thm}[Meckes~\cite{Meckes:PersComm}]
\label{thm:MagnitudeWeighting}
  Suppose $K$ is a compact, convex subset of $\R^n$, for $n=2p+1$,  with nonempty
  interior, $\interior(K)$.   Suppose that we have a distribution $w\in H^{-(p+1)}(\R^n)$, which is supported on K and
  satisfies 
  \[\langle w, e^{-\mathrm{d}(\vec{s},\cdot)}\rangle = 1 \quad\text{for every }\vec{s} \in \interior(K),\] 
  then $w$ is the unique such distribution on $K$.  
  
  Moreover, if $\one$ is any smooth function which is $1$ on $K$ 
then the magnitude of $K$ is given by
  \begin{equation}
  \left|K\right| = \langle w, \one\rangle.
  \label{eqn:magdefn}
  \end{equation}
\end{thm}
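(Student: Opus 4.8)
\emph{Proof proposal.} The plan is to realise $H^{-(p+1)}(\R^n)$, with $n=2p+1$, as a Hilbert space in which solving the weight equation becomes a Riesz representation problem. The one genuinely analytic input is the classical Fourier identity $\widehat{e^{-\|\cdot\|}}(\xi)=\gamma_n\,(1+\|\xi\|^2)^{-(p+1)}$ on $\R^{2p+1}$, for an explicit constant $\gamma_n>0$. From it one sees that the symmetric bilinear form
\[
(u,v):=\bigl\langle u,\,e^{-\|\cdot\|}\ast v\bigr\rangle=\gamma_n\int_{\R^n}\hat u(\xi)\,\overline{\hat v(\xi)}\,(1+\|\xi\|^2)^{-(p+1)}\,\dd\xi
\]
is, up to the factor $\gamma_n$, the standard inner product on $H^{-(p+1)}(\R^n)$; in particular it is positive definite, and $\mathcal H_K:=\{v\in H^{-(p+1)}(\R^n)\mid\operatorname{supp}v\subseteq K\}$ is a closed subspace, hence a Hilbert space in its own right. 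Since $p+1>n/2$ we have $\delta_{\vec x}\in H^{-(p+1)}(\R^n)$ for every $\vec x\in\R^n$ and, dually, $H^{p+1}(\R^n)\hookrightarrow C^0(\R^n)$; thus $e^{-\|\cdot\|}\ast v$ is a genuine continuous function for every $v\in\mathcal H_K$, and testing the weight equation against $\delta_{\vec s}$ shows that it is equivalent to the assertion $(e^{-\|\cdot\|}\ast w)(\vec s)=1$ for all $\vec s\in\interior(K)$.

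The crux is a localisation lemma: if $f\in H^{p+1}(\R^n)$ vanishes on $K$, then $\langle v,f\rangle=0$ for every $v\in\mathcal H_K$. This is where convexity of $K$ has to be used. Since $K$ is convex, its exterior $\R^n\setminus K$ is a Lipschitz open set, and since the Sobolev exponent $p+1$ is an integer, and so avoids the exceptional half-integer values ($p+1\notin\tfrac12+\Z$), the space of $H^{p+1}$-functions supported in $\overline{\R^n\setminus K}$ coincides with the $H^{p+1}(\R^n)$-closure of $C^\infty_c(\R^n\setminus K)$. If $f=0$ on $K$ then $\operatorname{supp}f\subseteq\overline{\R^n\setminus K}$, so $f=\lim_j f_j$ in $H^{p+1}(\R^n)$ with each $f_j$ smooth and supported at positive distance from $K$; then $\langle v,f_j\rangle=0$ because $f_j$ vanishes on a neighbourhood of $\operatorname{supp}v\subseteq K$, and continuity of the $H^{-(p+1)}\times H^{p+1}$ pairing gives $\langle v,f\rangle=0$.

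Granting the lemma, the rest is soft. For uniqueness: if $w$ satisfies the hypotheses then $\one-e^{-\|\cdot\|}\ast w\in H^{p+1}(\R^n)$ vanishes on $\interior(K)$ by the weight equation, hence on $K$ by continuity, so the lemma gives $\langle v,\one\rangle=\langle v,e^{-\|\cdot\|}\ast w\rangle=(v,w)$ for every $v\in\mathcal H_K$; that is, $w$ is the Riesz representative in $\mathcal H_K$ of the bounded functional $v\mapsto\langle v,\one\rangle$ (bounded since $|\langle v,\one\rangle|\le\|v\|_{H^{-(p+1)}}\|\one\|_{H^{p+1}}$), and such a representative is unique, so any $w$ satisfying the hypotheses is this one --- which is what we mean by \emph{the} weight distribution on $K$. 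For the magnitude I would start from Meckes's variational formula $|K|=\sup_{\mu}\bigl(2\langle\mu,\one\rangle-(\mu,\mu)\bigr)$, the supremum taken over finite signed measures on $K$. Finite signed measures on $K$ are dense in $\mathcal H_K$ (dilate $\mu$ slightly towards an interior point of $K$, then mollify at a scale small enough to keep the support inside $K$), and $v\mapsto 2\langle v,\one\rangle-(v,v)$ is continuous on $H^{-(p+1)}(\R^n)$, so the supremum is unchanged when $\mu$ ranges over all of $\mathcal H_K$. Writing $2\langle v,\one\rangle-(v,v)=2(v,w)-(v,v)=(w,w)-(v-w,v-w)$, the supremum equals $(w,w)$, attained at $v=w$, and $(w,w)=\langle w,\one\rangle$ by the Riesz identity; hence $|K|=\langle w,\one\rangle$. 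In passing, the right-hand side is independent of the choice of $\one$, since two admissible choices differ by an $H^{p+1}$-function vanishing on $K$, which the lemma annihilates.

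The main obstacle is the localisation lemma. Identifying the $H^{p+1}$-functions that vanish on the convex body $K$ with the $H^{p+1}$-closure of $C^\infty_c(\R^n\setminus K)$ is not formal: it rests on the structure theory of Sobolev spaces on Lipschitz domains, together with the observation that the relevant exponent $p+1$ avoids the exceptional half-integer values. The remaining ingredients --- the Fourier transform of $e^{-\|\cdot\|}$, the embeddings $\delta_{\vec x}\in H^{-(p+1)}$ and $H^{p+1}\hookrightarrow C^0$, the density of finite signed measures in $\mathcal H_K$, and the final Hilbert-space computation --- are routine by comparison.
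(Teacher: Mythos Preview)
Your argument is correct, but it is organised differently from the paper's. The paper passes immediately to the potential $h=Zw\in H^{p+1}(\R^n)$ via the canonical isometry $Z\colon H^{-(p+1)}\to H^{p+1}$ (so that $w=\frac{1}{n!\,\omega_n}(I-\Delta)^{p+1}h$), reads off from the hypotheses that $h=1$ on $K$ and $(I-\Delta)^{p+1}h=0$ weakly on $\R^n\setminus K$, and then simply cites Barcel\'o--Carbery for the uniqueness of such an $h$, Meckes's Proposition~5.7 to identify $h$ as the potential function, and Meckes's Proposition~4.2 for the magnitude formula. In short, the paper reduces the statement to a boundary-value problem for the potential and quotes the existing literature for each step.

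You stay on the $H^{-(p+1)}$ side and unpack much of what those citations contain: your localisation lemma (resting on the identification of $\tilde H^{p+1}(\R^n\setminus K)$ with $H^{p+1}_{\overline{\R^n\setminus K}}$ for Lipschitz exteriors and integer exponent) plays the role of the Barcel\'o--Carbery uniqueness, Riesz representation in $\mathcal H_K$ replaces the identification of $h$ with the potential function, and the variational formula plus density of measures replaces the cited Proposition~4.2. Both proofs sit on the same Hilbert-space structure coming from $\widehat{e^{-\|\cdot\|}}=\gamma_n(1+\|\xi\|^2)^{-(p+1)}$, so neither is essentially more elementary; your version has the merit of making the Sobolev-theoretic input explicit (and of showing directly why $\langle w,\one\rangle$ is independent of the choice of $\one$), at the cost of having to justify the trace-type identification and the density of measures yourself. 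One small point: in your density parenthetical you write ``dilate $\mu$'' where you mean a general $v\in\mathcal H_K$; the idea (dilate towards an interior point using convexity, then mollify) is correct.
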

\noindent This theorem is essentially a summary of ideas from~\cite{Meckes:MagnitudeDimensions} and~\cite{BarceloCarbery}.  Note that in the above $\langle w, e^{-\mathrm{d}(\vec{s},\cdot)}\rangle$ is well defined because, although the function $e^{-\mathrm{d}(\vec{s},\cdot)}$ is not differentiable at the origin, it does lie in $H^{(p+1)}(\R^n)$.

 We will call a distribution which satisfies the hypotheses of the above theorem a \define{weight distribution}.

\subsection{How we find the magnitude of an odd ball}
We will find the magnitude of an odd ball by using Theorem~\ref{thm:MagnitudeWeighting} above.  So we want to find a weight distribution $w$ on the $n$-ball $\ball \subset \R^n$, this means a distribution $w\in H^{-(p+1)}(\R^n)$  which is supported on the ball   and satisfies the weight equation
\begin{equation}
  \left\langle w, e^{-\left|{\cdot}-\vec{s}\right|}\right\rangle =1\qquad\text{for all }\vec{s}\in \R^n\text{ with }|\vec{s}|<R.
  \label{eqn:weightdistribution}
\end{equation}
 We will guess that the weight distribution $w$ has a particular form, namely some multiple of the Lebesgue measure on the ball plus some linear combination of integration of normal derivatives on the boundary sphere;  so we will guess that for a suitably smooth function $f$ the distribution evaluates on $f$ as follows:
 \begin{equation}
   \langle w, f\rangle
   =
   \frac{1}{n!\,\omega_n}\left(
   \int_{\vec{x}\in\ball} f\,\dd\vec{x} +
  \sum_{i=0}^p 
    \beta_i(R) \int_{\vec{x}\in\sphere} \dnormali{i} f\,\dd\vec{x}
    \right),
   \label{eqn:defineweighting}
\end{equation}
where, as above, $\dnormal$ means differentiate in the normal direction to the boundary, where $\omega_n$ is the volume of the unit $n$-dimensional ball and where $\left\{\beta_i(R)\right\}_{i=0}^p$ is a set of unknown functions of $R$ which will be found by solving the weight equation~\eqref{eqn:weightdistribution}.  This guess is based on low-dimensional calculations following on from ideas in~\cite{BarceloCarbery}.  The fact that this distribution is in the Bessel potential space $H^{-(p+1)}(\R^n)$ is proved in Section~\ref{section:ExamplesInBesselSpace}.

Once we have shown that we can solve the weight equation for this $w$ then we can find the magnitude using equation~\eqref{eqn:magdefn}:
\begin{align}
  \left|\ball \right|
  &=  \langle w, \one\rangle
  =
  \frac{1}{n!\,\omega_n}\left(
   \int_{\ball} \one \,\dd\vec{x}+
   \beta_0(R) \int_{\sphere} \one\,\dd\vec{x}\right)\notag\\
   &=
  \frac{1}{n!\,\omega_n}\left(
  \omega_n R^n +\beta_0(R) \sigma_{n-1}R^{n-1}
   \right)\notag \\
   &=
  \frac{1}{n!}\left(
   R^n +n\beta_0(R) R^{n-1}
   \right),
   \label{eqn:magbetazero}
\end{align}
where $\sigma_{n-1}$ denotes the volume of the unit $(n-1)$-sphere and where we have used the fact that $\sigma_{n-1}/\omega_n = n$.  So the goal now is to solve the weight equation~\eqref{eqn:weightdistribution} using a weight distribution of the form \eqref{eqn:defineweighting}; to do this we will need to prove a rather non-obvious integral identity.

\subsection{The key integral identity and its generalization}
The key to the solution to the weight equation \eqref{eqn:weightdistribution} lies in evaluating, for $\vec{s}\in\interior(\ball)$, the integral over the sphere
\[
  \frac{1}{n!\,\omega_n}
  \int_{\vec{x}\in\sphere}
  e^{-\left|\vec{x}-\vec{s}\right|}\,\mathrm{d}\vec{x},
\]
where, as usual, $\omega_n$ is the volume of the unit $n$-ball.
By spherical symmetry, the only dependence of this expression on the vector $\vec{s}$ is via its magnitude $|\vec{s}|$: we will write $s=|\vec{s}|$.  This expression will be thought of as a function of $R$ and $s$, with $R>s\ge 0$ and we are going to express it in terms of the reverse Bessel polynomials and another sequence of  functions which we will now introduce.


The reverse Bessel polynomials are closely related to the modified spherical Bessel functions of the second kind.  The next sequence of functions we are interested in are closely related to modified spherical Bessel functions of the \emph{first} kind~\cite[Section 10]{NIST:DLMF}, but I don't have a good name for them.  Let  $(\psit_i)_{i=0}^\infty$ denote the sequence of functions $\R\to \R$ which has $\psit_0=\cosh$ and satisfies the recursion relation
\[
  \psit_{i+1}(s)=-\tfrac{1}{s}\psit_i'(s).
\]
The sequence
begins in the following way:
 \begin{align*}
\psit_0(s)&=\cosh(s);\\
\psit_1(s)
&=-\frac{\sinh\left(s\right)}{s};\\
\psit_2(s)
&=\frac{\cosh\left(s\right)}{s^{2}} - \frac{\sinh\left(s\right)}{s^{3}};\\
\psit_3(s)
&=
-\frac{\sinh\left(s\right)}{s^{3}} + \frac{3 \, \cosh\left(s\right)}{s^{4}} - \frac{3 \, \sinh\left(s\right)}{s^{5}};\\
\psit_4(s)
&=
\frac{\cosh\left(s\right)}{s^{4}} - \frac{6 \, \sinh\left(s\right)}{s^{5}} + \frac{15 \, \cosh\left(s\right)}{s^{6}} - \frac{15 \, \sinh\left(s\right)}{s^{7}}.
\end{align*}
It is not obvious that these functions are well defined at $s=0$, but this is proved in Proposition~\ref{prop:psiTilde0}.  

We can now state the key result required for solving the weight equation.
\begin{thm}[The Key Integral]
\label{thm:KeyIntegral}
  For $n=2p+1$ an odd integer, $R>0$, with $\vec{s}$ a point in the interior of the ball $\ball$, and $s=|\vec{s}|$, then
  \[
    \frac{1}{n!\,\omega_n}
    \int_{\vec{x}\in\sphere}
    e^{-\left|\vec{x}-\vec{s}\right|}\,\mathrm{d}\vec{x}
    =
    \frac{(-1)^p e^{-R}}{2^p p!}\sum_{i=0}^p \binom{p}{i}\chi_{p+i}(R)\psit_i(s).
  \]
\end{thm}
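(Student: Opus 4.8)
The plan is to reduce the sphere integral to a one-dimensional integral in the "height" variable and then recognise the answer as a solution of an ordinary differential equation (in $R$) that the right-hand side also satisfies, the approach being lifted from the PDE methods of Barcel\'o and Carbery. First I would exploit the rotational symmetry: fixing $\vec s$ on an axis, parametrise $\sphere$ so that $\left|\vec x-\vec s\right|$ depends only on the single coordinate $t=\langle\vec x,\vec s\rangle/s\in[-R,R]$, and integrate out the remaining $(n-2)$-sphere of radius $\sqrt{R^2-t^2}$. This converts $\frac{1}{n!\,\omega_n}\int_{\sphere}e^{-\left|\vec x-\vec s\right|}\,\dd\vec x$ into a constant multiple of $\int_{-R}^{R}e^{-\sqrt{R^2+s^2-2st}}\,(R^2-t^2)^{(n-3)/2}\,\dd t$; substituting $u=\sqrt{R^2+s^2-2st}$ turns this into an integral of $e^{-u}$ against a polynomial in $u$ over $u\in[R-s,R+s]$, which can in principle be evaluated by repeated integration by parts. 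I would rather not grind that out directly, though.

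Instead, the cleaner route is to show both sides satisfy the same Helmholtz-type ODE in $R$ with the same initial data. The function $\vec s\mapsto \frac{1}{n!\,\omega_n}\int_{\sphere}e^{-\left|\vec x-\vec s\right|}\,\dd\vec x$ is, up to normalisation, the single-layer potential of the kernel $e^{-\left|\cdot\right|}$ over the sphere of radius $R$; since $(-\Delta+1)$ applied to $e^{-\left|\vec x\right|}$ in $\R^{2p+1}$ is a multiple of $\delta_0$ plus lower-order spherically symmetric terms (this is exactly the computation underlying Meckes's and Barcel\'o--Carbery's potential theory), the potential is a radial function of $s$ annihilated by a fixed ordinary differential operator for $s<R$, with jump conditions across $s=R$. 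Matching this against the claimed formula means I need the recursion $\psit_{i+1}=-\tfrac1s\psit_i'$ for the $\psit_i$ and the recursion \eqref{eq:ChiRecursion} for the $\chi_i$ to be compatible with that operator; the binomial sum $\sum_i\binom{p}{i}\chi_{p+i}(R)\psit_i(s)$ should be engineered precisely so that the $R$-derivatives act via the $\chi$-recursion while the $s$-derivatives act via the $\psit$-recursion, and the two match because of the classical contiguous relations among (modified spherical) Bessel functions of the two kinds.

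Concretely the steps are: (1) set up the symmetry reduction and identify the integral as a radial potential; (2) compute $(-\Delta_{\vec s}+1)$ of it, getting $0$ for $0\le s<R$ and recording the behaviour as $s\uparrow R$ and as $s\downarrow 0$; (3) verify directly that the right-hand side, as a function of $s$ for fixed $R$, is annihilated by the same radial operator — this uses only $\psit_{i+1}=-\tfrac1s\psit_i'$, $\psit_0=\cosh$, and the fact that $\cosh$ solves the scalar equation; (4) pin down the two constants (overall normalisation and the absence of the "other" solution, i.e.\ regularity at $s=0$) by evaluating at $s=0$, where only $\psit_0(0)=1$ survives and one is left checking $\frac{1}{n!\,\omega_n}\int_{\sphere}e^{-R}\,\dd\vec x=\frac{\sigma_{n-1}R^{n-1}}{n!\,\omega_n}e^{-R}=\frac{(-1)^pe^{-R}}{2^pp!}\chi_p(R)$, i.e.\ that $n\,R^{n-1}=\tfrac{(-1)^pn!}{2^pp!}\chi_p(R)$ — wait, that cannot be right since $\chi_p$ is not a monomial, so in fact $s=0$ only fixes one relation and the matching of the full $R$-dependence must come from an induction on $p$ using \eqref{eq:ChiRecursion}; (5) close the induction, deriving the $(2p+1)$-dimensional identity from the $(2p-1)$-dimensional one by relating integration over $S^{2p}_R$ to integration over $S^{2p-2}_R$ via the slicing in step (1), which introduces exactly one application of each recursion and hence the shift $p\mapsto p+1$ in both indices of $\chi_{p+i}$ and the new binomial coefficients.

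The main obstacle I expect is step (5), the bookkeeping of the induction: making the dimensional recursion on the left-hand side line up, term by term, with the simultaneous shifts in the $\chi$-index (via $R^2\chi_i+(2i+1)\chi_{i+1}$) and the appearance of a fresh $\psit$-factor on the right, so that the binomial identity $\binom{p+1}{i}=\binom{p}{i}+\binom{p}{i-1}$ falls out naturally rather than having to be forced. Equivalently, if one prefers the direct route, the obstacle is organising the repeated integration by parts in the $u$-integral of step (1) so that the emerging coefficients are manifestly $\binom{p}{i}\chi_{p+i}(R)$; either way the combinatorial heart of the proof is showing that the two Bessel-type recursions are exchanged by the dimensional shift. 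Given the paper's stated intention to use "PDE methods inspired by~\cite{BarceloCarbery}", I would expect the actual proof to follow the Helmholtz-equation route of steps (1)--(4) and replace my messy induction in (5) by a clean uniqueness argument for the relevant boundary value problem.
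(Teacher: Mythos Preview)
Your proposal has the right instinct---exploit radial symmetry and use a differential operator in $\vec s$ to characterise the integral---but it contains a genuine gap at the step you label (2)--(3). You assert that $(-\Delta_{\vec s}+1)$ applied once to the single-layer potential vanishes for $s<R$. It does not: for a radial function $f(r)$ on $\R^{2p+1}$ one has $(I-\Delta)e^{-r}=\tfrac{2p}{r}e^{-r}=2p\,\psi_1(r)$, which is nonzero for $p\ge 1$. The crucial point, which is the engine of the paper's proof, is that $(I-\Delta)$ acts as a \emph{ladder} operator, $(I-\Delta)\psi_i=2(p-i)\psi_{i+1}$, so that one needs the \emph{iterated} operator $(I-\Delta)^{p+1}$ to annihilate $\psi_0=e^{-r}$. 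The resulting ODE in $s$ is of order $2(p+1)$, and its smooth radial solutions near the origin form a $(p+1)$-dimensional space spanned by $\{\psit_0,\dots,\psit_p\}$. This is exactly why the right-hand side is a sum of $p+1$ terms, and it is what your argument is missing.

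This omission also explains your confusion in step (4). At $s=0$ one has $\psit_i(0)=(-1)^i/\bigl(1\cdot3\cdots(2i-1)\bigr)$, so \emph{all} terms survive, not just the $i=0$ term; your attempted check $n R^{n-1}\stackrel{?}{=}\tfrac{(-1)^p n!}{2^p p!}\chi_p(R)$ therefore fails for the right reason, not the one you diagnosed. Setting $s=0$ gives only one linear relation among the $p+1$ unknown coefficients; the remaining relations are obtained not by induction on the dimension $p$ as you propose in (5), but by applying $(I-\Delta_{\vec s})$ once to reduce to the corresponding integral with $\psi_1$ in place of $\psi_0$. The paper packages this as a generalised identity for $\int_{S^{2p}_R}\psi_j(|\vec x-\vec s|)\,\dd\vec x$ for $0\le j\le p$ and inducts \emph{downward} on $j$ from the easy base case $j=p$ (where a single evaluation at $s=0$ suffices) to the desired case $j=0$. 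Your dimensional induction on $p$ would require comparing integrals over spheres of different dimensions and is unlikely to produce the clean binomial structure you are hoping for.
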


In the process of proving the above key integral we will prove a more general result. We will need to introduce the sequence of functions $(\psi_i(r))_{i=0}^\infty$, used by Barcel\'o and Carbery~\cite{BarceloCarbery}.  These can be defined by taking $\psi_0(r)=\e^{-r}$ and using the same recurrence relation as for $\psit_i$, namely,
\[
  \psi_{i+1}(r)=-\tfrac{1}{r}\psi_i'(r).
\]
The sequence
begins in the following way:
 \begin{align*}
\psi_0 (r) &= \e^{-r}\\[0.5em]
\psi_1 (r) &= \e^{-r} \left( \frac{1}{r} \right)\\
\psi_2 (r) &= \e^{-r} \left( \frac{1}{r^{2}} + \frac{1}{r^{3}} \right)\\
\psi_3 (r) &= \e^{-r} \left( \frac{1}{r^{3}} + \frac{3}{r^{4}} + \frac{3}{r^{5}} \right)\\
\psi_4 (r) &= \e^{-r} \left( \frac{1}{r^{4}} + \frac{6}{r^{5}} + \frac{15}{r^{6}} + \frac{15}{r^{7}} \right)
.
\end{align*}
These are related to modified spherical Bessel functions of the second kind (see below) and are related to the reverse Bessel polynomials by $\chi_i(r)=r^{2i}\e^r\psi_i(r)$ as shall be proved in Proposition~\ref{prop:chiAndpsi}. 

We can now state the more general theorem which we shall prove.
\begin{thm}
\label{thm:GeneralKeyIntegral}
For $0\le j \le p$, $\vec{s}\in \R^{2p+1}$, with $s=|\vec{s}|$ and  $R>s\ge 0$
\begin{equation*}
  \int_{\vect{x}\in S^{2p}_R} \psi_j(\left | \vect{x}-\vect{s}\right|)\,\dd\vect{x}
  =
  (-2\pi)^p  2 \e^{-R}\sum_{i=0}^{p-j} \binom{p-j}{i}\chi_{i+p}(R) \psit_{i+j}(s).
\end{equation*}
\end{thm}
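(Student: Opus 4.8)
The plan is to prove the identity by induction on $p$, with the spherical integral of $\psi_j(|\vec{x}-\vec{s}|)$ computed via a PDE argument. The crucial observation is that $\psi_0(r)=e^{-r}$ satisfies a modified Helmholtz-type equation away from the origin, and more generally the functions $\psi_j$ are built to interact nicely with the radial Laplacian in dimension $2p+1$: indeed the substitution $\psi_{j+1}=-\frac{1}{r}\psi_j'$ is precisely the operation that lowers the "effective dimension" by $2$. So I would first establish, in the body of Section~\ref{section:KeyIntegral}, that $u(\vec{x}):=\psi_j(|\vec{x}-\vec{s}|)$ is (for $\vec{x}\ne\vec{s}$) annihilated by a suitable constant-coefficient operator, or satisfies an inhomogeneous equation $\Delta u = u - (\text{lower order in the }\psi\text{-hierarchy})$, where $\Delta$ is the Laplacian on $\R^{2p+1}$. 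Integrating such a relation over the ball $B^{2p+1}_R$ and applying the divergence theorem converts a volume integral into the boundary integral $\int_{S^{2p}_R}\psi_j$ plus normal-derivative terms $\int_{S^{2p}_R}\frac{\partial}{\partial\nu}\psi_j$, and the latter is again of the same shape because $\frac{\partial}{\partial\nu}$ acting radially reproduces $\psi_{j+1}$ up to a factor of $r$.

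Concretely, I would proceed as follows. First I would record the ODE satisfied by each $\psi_i$ as a function of $r$ (equivalently, translate the recursion $\chi_i=r^{2i}e^r\psi_i$ of Proposition~\ref{prop:chiAndpsi} and the $\chi$-recursion~\eqref{eq:ChiRecursion} into a relation purely among $\psi_{i-1},\psi_i,\psi_{i+1}$ and their derivatives). Second, I would fix $\vec{s}$ and view the left-hand side as a function $F_j(R,s)$; differentiating under the integral sign with respect to $R$, and using the recursion $\psi_j' = -r\,\psi_{j+1}$ together with the geometry of the sphere, I would derive a first-order linear ODE in $R$ relating $F_j$, $\partial_R F_j$ and $F_{j+1}$ (this is the Green's-identity step in disguise). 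Third, I would check the claimed closed form on the right-hand side satisfies the same ODE: this reduces to a polynomial identity among the $\chi_{i+p}(R)$ obtained from~\eqref{eq:ChiRecursion}, and a matching identity among the $\psit_{i+j}(s)$ obtained from $\psit_{i+1}=-\frac{1}{s}\psit_i'$. Fourth, I would pin down the solution by a boundary/initial condition — most naturally the behaviour as $R\downarrow s$, or alternatively the limiting case $j=p$, for which the sum on the right collapses to the single term $(-2\pi)^p\,2\,e^{-R}\chi_p(R)\psit_p(s)$ and the left-hand side $\int_{S^{2p}_R}\psi_p(|\vec{x}-\vec{s}|)\,\dd\vec{x}$ can be evaluated directly (since $\psi_p$ is, up to elementary factors, the fundamental solution we want, this base case is where the Bessel-function identities do the real work).

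The main obstacle, I expect, is getting the PDE/Green's-identity bookkeeping exactly right: one must handle the singularity of $\psi_j(|\vec{x}-\vec{s}|)$ at $\vec{x}=\vec{s}$ (which for $j\ge 1$ is non-integrable in the naive sense, so the "volume integral" must be interpreted as a principal value or regularized, contributing a delta-type boundary term at $\vec{s}$), and one must track all the dimension-dependent constants $\sigma_{2p}$, $\omega_{2p+1}$, the powers of $-2\pi$, and the factor $2$ coming from $\psit_0=\cosh$ rather than $e^{-s}$. A cleaner route that sidesteps the singularity is to prove the identity first for $\vec{s}$ outside a slightly smaller ball and then argue by real-analyticity in $s$ on $[0,R)$; alternatively one can integrate by parts in the $r$-variable inside the sphere integral using the explicit $\SO(2p+1)$-reduction to a one-dimensional integral over the angle between $\vec{x}$ and $\vec{s}$, turning everything into an elementary (if lengthy) computation with $\cosh$, $\sinh$, and powers of $s$ and $R$. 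Either way, once the base case $j=p$ is secured and the recursion in $j$ (decreasing $j$, holding $p$ fixed) is verified term-by-term against~\eqref{eq:ChiRecursion} and the $\psit$-recursion, the theorem follows; Theorem~\ref{thm:KeyIntegral} is then the special case $j=0$ after multiplying through by $\frac{1}{n!\,\omega_n}$ and simplifying $\sigma_{2p}/(n!\,\omega_n)$ against $(-2\pi)^p 2 e^{-R}$.
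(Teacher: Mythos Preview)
Your proposal has the right high-level shape --- base case $j=p$, then a downward recursion in $j$ with $p$ fixed --- but it is missing the paper's key structural idea and, as written, would not close.

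The paper never differentiates in $R$, never integrates over the ball, and never touches the singularity of $\psi_j(|\vec{x}-\vec{s}|)$ at $\vec{x}=\vec{s}$.  Instead it applies the operator $(I-\Delta_{\vec{s}})$ in the \emph{$\vec{s}$ variable}.  Because $\vec{s}$ lies strictly inside the sphere, this is completely nonsingular: one simply differentiates under the integral sign.  The crucial lemma is that $(I-\Delta)\xi_i = 2(p-i)\,\xi_{i+1}$ for both families $\psi_i$ and $\psit_i$ (the Laplacian being the one on $\R^{2p+1}$).  Hence $(I-\Delta_{\vec{s}})^{p-j+1}$ annihilates $\psi_j(|\vec{x}-\vec{s}|)$ and therefore annihilates the whole integral.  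But the integral is a smooth spherically symmetric function of $\vec{s}$ near the origin, and the space of such solutions of $(I-\Delta)^{p-j+1}g=0$ is exactly $\operatorname{span}\{\psit_j,\dots,\psit_p\}$ (the $\psi_i$'s are excluded because they are singular at $0$).  This is what forces the right-hand side to have the claimed form $\sum_i a_{p,j,i}(R)\,\psit_{i+j}(s)$ \emph{before} any coefficient-matching; your proposal never establishes this and instead asks to ``check the RHS satisfies the same ODE'', which does not by itself pin down the $s$-dependence.

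Once that structural step is in place, the recursion is clean: applying $(I-\Delta_{\vec{s}})$ once turns the $j$-integral into $2(p-j)$ times the $(j{+}1)$-integral, and on the right it shifts the $\psit$'s up by one; comparing with the inductive hypothesis identifies all coefficients $a_{p,j,i}(R)$ except the top one $a_{p,j,p-j}(R)$, and that last one is read off by setting $s=0$ (using a Leibniz-type expansion of $\psi_{p-(p-j)}$ in terms of $\psi_p,\psi_{p+1},\dots$ together with the explicit values $\psit_i(0)$).  Your Green's-identity/ODE-in-$R$ route would reintroduce exactly the singularity bookkeeping you flagged, and your ``induction on $p$'' is not the induction that runs --- the induction is downward in $j$ with $p$ fixed throughout.
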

\noindent The integral we want, Theorem~\ref{thm:KeyIntegral}, is then just the case $j=0$, as the volume of the unit radius odd ball has the formula $\omega_{2p+1}=\frac{2(p!) (4\pi)^p}{(2p+1)!}$.  We will prove this more general case, because we obtain the case $j=0$ by proceeding inductively downward from $j=p$.  The proof of this theorem is given in Section~\ref{section:KeyIntegral} and uses PDE methods inspired by results in~\cite{BarceloCarbery}.

It is worth noting here the relationship of the above integral with a Bessel function identity.  One can show that our functions are related to the modified spherical Bessel functions via 
\[
\psit_i(x)=(-1)^i\sqrt{\frac{\pi}{2}}\frac{I_{i-1/2}(x)}{x^{i-1/2}}
\quad\text{and}\quad
\psi_i(x)=\sqrt{\frac{2}{\pi}}\frac{K_{i-1/2}(x)}{x^{i-1/2}}
,\]
where $I_\nu$ and $K_\nu$ are the modified Bessel functions of the first and second kind respectively.
 Using these and polar coordinates on the sphere, we can write the above integral identity in the following way,
\begin{align*}
\sqrt{\frac{2}{\pi}}&\int_0^\pi \frac{K_{j-1/2}(w)}{w^{j-1/2}}\sin^{2p-1}(\theta)\, \dd\theta \\
&= (-1)^{p-j}(p-1)!\,  2^p 
\sum_{m=0}^{p-j}(-1)^m \binom{p-j}{m} 
              R^{2m} \frac{K_{p+m-1/2} (R)}{R^{p+m-1/2}}\cdot
              \frac{I_{m+j-1/2} (s)} { s^{m+j-1/2}},
\end{align*}
where $w=\sqrt{R^2+s^2-2Rs\cos\theta}$.  

I have not been able to find this result in the literature at all.  However, the `base case' for the induction, when $j=p$, becomes the following
\[
\sqrt{\frac{2}{\pi}}\int_0^\pi \frac{K_{p-1/2}(w)}{w^{p-1/2}}\sin^{2p-1}(\theta)\, \dd\theta 
= (p-1)! \, 2^p 
             \frac{K_{p-1/2} (R)}{R^{p-1/2}}\cdot
              \frac{I_{p-1/2} (s)} { s^{p-1/2}},
\]
and this --- or the unmodified analogue --- is standard in Bessel function literature, e.g.~\cite[11.42~(16)]{Watson:BesselFunctions} or \cite[(4.11.6)]{Andrews:SpecialFunctions}.

\subsection{Corollaries of the key integral identity}
In order to solve the weight equation \eqref{eqn:weightdistribution} using the weighting $w$ defined in \eqref{eqn:defineweighting}, as well as integrating the expression $ e^{-\left|\vec{x}-\vec{s}\right|}$ over the sphere, as in the key result above, we also need to integrate it over the ball, and to integrate its normal derivatives over the sphere.  Fortunately, these integrals can be obtained reasonably straightforwardly from the key result.  The proofs are given in Section~\ref{section:ProofsOfCorollaries}.

Firstly, integrating over the ball we have the following.
\begin{cor}
\label{cor:FirstCor}
    For $n=2p+1$ an odd integer, $R>0$, $\vec{s}$ a point in the interior of the ball $\ball$, and $s=|\vec{s}|$, then
  \[
    \frac{1}{n!\,\omega_n}
    \int_{\vec{x}\in\ball}
    e^{-\left|\vec{x}-\vec{s}\right|}\,\mathrm{d}\vec{x}
    =
    1 -
    \frac{(-1)^p e^{-R}}{2^p p!}\sum_{i=0}^p \binom{p}{i}\frac{\chi_{p+i+1}(R)}{R}\psit_i(s).
  \]
\end{cor}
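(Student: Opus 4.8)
The plan is to obtain this by ``integrating the Key Integral over the radius''. Fix $\vec{s}$ and hence $s=|\vec{s}|$, write $F(R)$ for the left-hand side of the claimed identity, viewed as a function of $R\in(s,\infty)$, and write $G(R)$ for the right-hand side. Each $\chi_k$ with $k\ge 1$ has vanishing constant term (immediate from the recursion~\eqref{eq:ChiRecursion} together with $\chi_0=1$, $\chi_1=R$), so $\chi_{p+i+1}(R)/R$ is a polynomial and $G$ is a smooth function of $R$; of course $F$ is smooth too. I will show that $F'=G'$ on $(s,\infty)$ and that $F(R)\to 1$ and $G(R)\to 1$ as $R\to\infty$; since $s<R$ whenever $\vec{s}\in\interior(\ball)$, this forces $F\equiv G$, which is the corollary.

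Differentiating $F$ is the point where the Key Integral enters. Slicing $\ball$ into the spheres $S^{n-1}_r$, $0\le r\le R$, shows that the $R$-derivative of an integral over $\ball$ is the corresponding surface integral over $\sphere$, so $F'(R)=\frac{1}{n!\,\omega_n}\int_{\vec{x}\in\sphere}e^{-|\vec{x}-\vec{s}|}\,\dd\vec{x}$. For $R>s$ the point $\vec{s}$ lies in the interior of $\ball$, so Theorem~\ref{thm:KeyIntegral} applies and gives
\[
  F'(R)=\frac{(-1)^p e^{-R}}{2^p p!}\sum_{i=0}^p\binom{p}{i}\chi_{p+i}(R)\,\psit_i(s).
\]

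To differentiate $G$ it suffices to establish the single identity
\[
  \frac{\dd}{\dd R}\!\left[\frac{e^{-R}\chi_k(R)}{R}\right]=-e^{-R}\chi_{k-1}(R)\qquad(k\ge 1).
\]
I would prove this by rewriting $e^{-R}\chi_k(R)/R = R^{2k-1}\psi_k(R)$ (using $\chi_k=R^{2k}e^R\psi_k$ from Proposition~\ref{prop:chiAndpsi}) and differentiating, invoking $\psi_k'=-R\psi_{k+1}$ and the relation $R^2\psi_{k+1}=\psi_{k-1}+(2k-1)\psi_k$, which is just~\eqref{eq:ChiRecursion} transported through $\chi_i=R^{2i}e^R\psi_i$. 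Applying this term-by-term with $k=p+i+1$ collapses $G'(R)$ exactly onto the displayed expression for $F'(R)$. Finally, as $R\to\infty$ we have $F(R)\to\frac{1}{n!\,\omega_n}\int_{\R^n}e^{-|\vec{y}|}\,\dd\vec{y}=\frac{\sigma_{n-1}(n-1)!}{n!\,\omega_n}=1$ (using $\sigma_{n-1}=n\,\omega_n$), while $G(R)\to 1$ because $e^{-R}\chi_{p+i+1}(R)/R\to 0$. Hence $F-G$ is constant on $(s,\infty)$ and tends to $0$, so $F\equiv G$.

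The one genuinely computational step is verifying the derivative identity for $e^{-R}\chi_k(R)/R$ and checking that, after inserting it into the binomial sum defining $G$, the result is precisely $F'$; everything else is bookkeeping. One could instead try to recover $F$ by integrating its derivative up from $R=0$, but then the spheres of radius $r<s$ contribute and for those $\vec{s}$ lies outside $S^{n-1}_r$, so Theorem~\ref{thm:KeyIntegral} does not apply in the stated form; the argument via the limit $R\to\infty$ avoids this difficulty entirely.
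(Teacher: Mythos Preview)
Your proof is correct and is essentially the same as the paper's: the paper writes $\int_{\ball}=\int_{\R^n}-\int_R^\infty\int_{S^{n-1}_r}$ and integrates the Key Integral over $r\in[R,\infty)$, using the antiderivative identity $\frac{\dd}{\dd r}\bigl(-e^{-r}\chi_{i+1}(r)/r\bigr)=e^{-r}\chi_i(r)$ (proved exactly as you suggest, via $\chi_i=r^{2i}e^r\psi_i$) together with $\frac{1}{n!\,\omega_n}\int_{\R^n}e^{-|\vec{x}-\vec{s}|}\,\dd\vec{x}=1$. Your version simply applies the Fundamental Theorem of Calculus in the other direction---differentiating both sides in $R$ and matching constants at $R=\infty$---but the two lemmas invoked are identical.
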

\noindent This is proved in Section~\ref{section:ProofOfFirstCor}.

For integrating the normal derivatives over the sphere, it makes sense to simplify the notation by introducing the operator $\deltap$ defined on functions of $R$ by
\[
  \deltap f(R):=
  e^{-R}R^{2p}\frac{\mathrm d}{\mathrm d R}\left(e^R R^{-2p}f(R)\right).\]
Remember that $p$ is supposed to be a fixed integer, so although $\deltap$ depends on $p$ we won't include it in the notation.
\begin{cor}
\label{cor:SecondCor}
  For $n=2p+1$ an odd integer, $R>0$, $\vec{s}$ a point in the interior of the ball $\ball$, and $s=|\vec{s}|$, then
  \[
    \frac{1}{n!\,\omega_n}
    \int_{\vec{x}\in\sphere}
    \dnormali{j}e^{-\left|\vec{x}-\vec{s}\right|}\,\mathrm{d}\vec{x}
    =
    \frac{(-1)^p  e^{-R}}{2^p p!}\sum_{i=0}^p \binom{p}{i}
    \deltap^j\chi_{p+i}(R)\psit_i(s).
  \]
\end{cor}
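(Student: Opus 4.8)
The plan is to induct on $j$, with the base case $j=0$ being nothing other than Theorem~\ref{thm:KeyIntegral} (interpreting $\deltap^{0}$ as the identity). For the inductive step I would first record the elementary formula for differentiating a sphere integral in its radius: if $g$ is smooth on an annulus about $\sphere$, then parametrising $\sphere$ by the unit sphere via $\vec{x}=R\vec{u}$ (so $\dd\vec{x}=R^{n-1}\dd\vec{u}$) and differentiating $\int_{\vec{x}\in\sphere}g\,\dd\vec{x}=R^{n-1}\int_{\vec{u}\in S^{n-1}}g(R\vec{u})\,\dd\vec{u}$ under the integral sign yields
\[
\frac{\dd}{\dd R}\int_{\vec{x}\in\sphere}g\,\dd\vec{x}
=\frac{n-1}{R}\int_{\vec{x}\in\sphere}g\,\dd\vec{x}+\int_{\vec{x}\in\sphere}\dnormal g\,\dd\vec{x},
\]
the last term appearing because the outward normal to $\sphere$ is radial, so $\tfrac{\partial}{\partial R}g(R\vec{u})=(\dnormal g)(R\vec{u})$. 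Since $n-1=2p$, and since $\vec{s}\in\interior(\ball)$ keeps $e^{-|\vec{x}-\vec{s}|}$ and each of its iterated normal derivatives (which are just the iterated radial derivatives of this function) smooth on an annulus around $\sphere$, taking $g=\dnormali{j}e^{-|\cdot-\vec{s}|}$ and rearranging gives the recursion
\[
\frac{1}{n!\,\omega_n}\int_{\vec{x}\in\sphere}\dnormali{j+1}e^{-|\vec{x}-\vec{s}|}\,\dd\vec{x}
=\Bigl(\frac{\dd}{\dd R}-\frac{2p}{R}\Bigr)\Bigl[\frac{1}{n!\,\omega_n}\int_{\vec{x}\in\sphere}\dnormali{j}e^{-|\vec{x}-\vec{s}|}\,\dd\vec{x}\Bigr].
\]

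I would then substitute the inductive hypothesis into the bracket and push $\frac{\dd}{\dd R}-\frac{2p}{R}$ through it. Because each $\psit_i(s)$ is independent of $R$, this operator acts only on the factor $e^{-R}\deltap^{j}\chi_{p+i}(R)$, and a one-line computation verifies
\[
\Bigl(\frac{\dd}{\dd R}-\frac{2p}{R}\Bigr)\bigl(e^{-R}h(R)\bigr)=e^{-R}\,\deltap h(R)
\]
for every differentiable $h$; that is, multiplication by $e^{-R}$ intertwines $\deltap$ with the operator $\frac{\dd}{\dd R}-\frac{2p}{R}$ --- which is exactly the reason the operator $\deltap$ is defined as it is. Taking $h=\deltap^{j}\chi_{p+i}$ therefore carries the bracket to $\tfrac{(-1)^{p}e^{-R}}{2^{p}p!}\sum_{i=0}^{p}\binom{p}{i}\deltap^{j+1}\chi_{p+i}(R)\,\psit_i(s)$, which is precisely the claimed identity at level $j+1$, so the induction closes.

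I do not expect a genuine obstacle: all of the real content is already carried by Theorem~\ref{thm:KeyIntegral}, and what remains is the observation that the sphere-averages of the iterated normal derivatives of $e^{-|\cdot-\vec{s}|}$ are generated from the plain sphere-average by repeatedly applying the first-order operator $\frac{\dd}{\dd R}-\frac{2p}{R}$, which the $e^{-R}$ prefactor turns into $\deltap$ acting on the reverse Bessel polynomials. The points that merit a sentence of justification are the legitimacy of differentiating under the integral sign (immediate, since $\vec{s}$ is bounded away from $\sphere$, so the integrand and its $R$-derivative are jointly continuous on a neighbourhood of $\sphere$), the identification of the iterated normal derivative of $e^{-|\vec{x}-\vec{s}|}$ with its iterated radial derivative, and the bookkeeping with the $e^{-R}$ and $R^{\pm 2p}$ factors in the intertwining identity above.
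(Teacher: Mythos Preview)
Your proposal is correct and follows essentially the same idea as the paper. The only difference is packaging: the paper does the computation in one shot by parametrising $\sphere$ by the unit sphere, identifying $\dnormali{j}e^{-|R\hat{\vec x}-\vec s|}=\frac{\dd^j}{\dd R^j}e^{-|R\hat{\vec x}-\vec s|}$, and then writing
\[
\frac{1}{n!\,\omega_n}\int_{\sphere}\dnormali{j}e^{-|\vec x-\vec s|}\,\dd\vec x
= R^{2p}\frac{\dd^j}{\dd R^j}\Bigl(R^{-2p}\cdot\frac{1}{n!\,\omega_n}\int_{\sphere}e^{-|\vec x-\vec s|}\,\dd\vec x\Bigr),
\]
which immediately gives $\deltap^j$ acting on $\chi_{p+i}$. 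Your induction is exactly the $j\mapsto j+1$ case of this identity, since the conjugation $R^{2p}\frac{\dd}{\dd R}R^{-2p}=\frac{\dd}{\dd R}-\frac{2p}{R}$ is what produces your recursion; the intertwining with $e^{-R}$ then turns that operator into $\deltap$, as you say.
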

\noindent This is proved in Section~\ref{section:ProofOfSecondCor}.

We will see below in Lemma~\ref{lemma:delta} that $\deltap^j\chi_{p+i}(R)$ can be expressed as a suitable linear combination of reverse Bessel functions.
\subsection{Solving the weight equation}  
We can now use the corollaries of the key integral identity to solve the weight equation for an odd ball. The weight equation \eqref{eqn:weightdistribution} for the weighting $w$ defined in~\eqref{eqn:defineweighting} is the following:
\begin{equation*}
   \frac{1}{n!\,\omega_n}\left(
   \int_{\vec{x}\in\ball}     e^{-\left|\vec{x}-\vec{s}\right|}\,\mathrm{d}\vec{x} +
  \sum_{i=0}^p 
    \beta_i(R) \int_{\vec{x}\in\sphere} \dnormali{i}    e^{-\left|\vec{x}-\vec{s}\right|}\,\mathrm{d}\vec{x}
    \right)
    =
    1
    \quad
    \text{for all }
    |\vec{s}|<R.
\end{equation*}
We want to solve this to find the set $\{\beta_j(R)\}_{j=0}^p$.  Substituting in the expressions from Corollaries~\ref{cor:FirstCor} and~\ref{cor:SecondCor} we find
\begin{multline*}
  1 -
    \frac{(-1)^p e^{-R}}{2^p p!}\sum_{i=0}^p
    \binom{p}{i}\frac{\chi_{p+i+1}(R)}{R}\psit_i(s)
    +\sum_{j=0}^p \beta_j(R) \frac{(-1)^p e^{-R}}{2^p p!}\sum_{i=0}^p \binom{p}{i}
    \deltap^j\chi_{p+i}(R)\psit_i(s)
    =1.
\end{multline*}
Rearranging and simplifying leads to the following:
\[
  \sum_{i=0}^p \binom{p}{i}\sum_{j=0}^p \beta_j(R)
    \deltap^j\chi_{p+i}(R)\psit_i(s)
  =
  \sum_{i=0}^p
    \binom{p}{i}\frac{\chi_{p+i+1}(R)}{R}\psit_i(s).
\]
As this needs to be true for all $s<R$ and as the set of functions $\{\psit_i\}_{i=0}^\infty$ is linearly independent, we can compare the coefficients of $\psit_i(s)$ for each $i=0,\dots,p$ and obtain the following linear system:
\[
  \sum_{j=0}^p
    \deltap^j\chi_{p+i}(R)\beta_j(R)
  =
  \chi_{p+i+1}(R)/R
  \qquad
  \text{for }i=0,\dots,p.
\]
We have thus obtained the following theorem.
\begin{thm}\label{thm:linearsystem}
The set of functions $\{\beta_j(R)\}_{i=0}^p$ satisfies the linear system
\[
\begin{pmatrix}
\chi_p(R)&  \deltap \chi_p(R)& \deltap^2 \chi_p(R)&\dots&\deltap^p\chi_p(R)\\
\chi_{p+1}(R)&  \deltap \chi_{p+1}(R)& \deltap^2 \chi_{p+1}(R)&\dots&\deltap^p\chi_{p+1}(R)\\
\vdots&\vdots&\vdots&&\vdots\\
\chi_{2p}(R)&  \deltap \chi_{2p}(R)& \deltap^2 \chi_{2p}(R)&\dots&\deltap^p\chi_{2p}(R)\\
\end{pmatrix}
\begin{pmatrix}
\beta_0(R)\\
\beta_1(R)\\
\vdots\\
\beta_{p}(R)
\end{pmatrix}
=
\begin{pmatrix}
\chi_{p+1}(R)/R\\
\chi_{p+2}(R)/R\\
\vdots\\
\chi_{2p+1}(R)/R
\end{pmatrix}
\]
if and only if the distribution $w$ defined in~\eqref{eqn:defineweighting} is a weight distribution on the ball $B^n_R$.
\end{thm}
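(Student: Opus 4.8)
The plan is to carry out the computation sketched in the paragraphs immediately above the statement and to supply the two small justifications it rests on. First I would substitute the explicit form \eqref{eqn:defineweighting} of the distribution $w$ into the weight equation \eqref{eqn:weightdistribution}, turning its left-hand side into $\frac{1}{n!\,\omega_n}$ times $\int_{\vec{x}\in\ball} e^{-|\vec{x}-\vec{s}|}\,\dd\vec{x} + \sum_{i=0}^p \beta_i(R)\int_{\vec{x}\in\sphere}\dnormali{i}e^{-|\vec{x}-\vec{s}|}\,\dd\vec{x}$, and then apply Corollary~\ref{cor:FirstCor} to the ball integral and Corollary~\ref{cor:SecondCor} to each normal-derivative integral over the sphere. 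Both corollaries express these quantities through the reverse Bessel polynomials $\chi_{p+i}$, the operator $\deltap$, and the functions $\psit_i(s)$; here $s=|\vec{s}|$, which by spherical symmetry is the only way $\vec{s}$ enters.

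After the substitution, the constant $1$ produced by Corollary~\ref{cor:FirstCor} cancels the $1$ on the right, and the common scalar $\frac{(-1)^p e^{-R}}{2^p p!}$, which is nonzero for every $R>0$, may be cancelled as well. What remains is the requirement that two explicit $\R$-linear combinations of $\psit_0(s),\dots,\psit_p(s)$, with coefficients depending on $R$, agree for all $s$ with $0\le s<R$. I would then use that the family $(\psit_i)_{i=0}^\infty$ is linearly independent: since each $\psit_i$ is analytic on $(0,\infty)$ and extends analytically across $0$ (Proposition~\ref{prop:psiTilde0}), a linear relation valid on the interval $[0,R)$ would be valid identically, so the restrictions of $\psit_0,\dots,\psit_p$ to $[0,R)$ are linearly independent. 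Comparing the coefficient of $\psit_i(s)$ on the two sides and dividing out the nonzero binomial coefficient $\binom{p}{i}$ gives, for each $i=0,\dots,p$, the scalar identity $\sum_{j=0}^p \deltap^j\chi_{p+i}(R)\,\beta_j(R)=\chi_{p+i+1}(R)/R$; reading these $p+1$ identities together (rows indexed by $i$, columns by $j$) is precisely the displayed matrix equation.

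Each of these steps is reversible, which gives the other implication: if the $\beta_j(R)$ satisfy the linear system then all $p+1$ coefficient identities hold, hence the two $\psit$-combinations coincide on $[0,R)$, hence --- after restoring the scalar factor and the cancelled $1$'s --- the weight equation \eqref{eqn:weightdistribution} holds. Finally, to match the statement's phrase ``weight distribution'' I would recall that any distribution of the shape \eqref{eqn:defineweighting} is automatically supported on $\ball$ and lies in $H^{-(p+1)}(\R^n)$, this being the content of Section~\ref{section:ExamplesInBesselSpace}; for such $w$ the only remaining clause in the definition of a weight distribution is the weight equation itself, so the two directions together yield the claimed ``if and only if''.

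I do not anticipate a serious obstacle: the genuine analytic work is already packaged inside the Key Integral (Theorem~\ref{thm:KeyIntegral}) and its Corollaries~\ref{cor:FirstCor} and~\ref{cor:SecondCor}, and the present theorem is a formal consequence of them together with the definition of a weight distribution. The one point needing a little care is the passage from ``the two $\psit$-combinations agree on $[0,R)$'' to ``their coefficients agree'', i.e.\ linear independence of $\psit_0,\dots,\psit_p$ on a bounded interval; that is where I would invoke analyticity as above rather than attempt to handle the whole infinite family at once.
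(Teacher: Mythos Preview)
Your proposal is correct and follows essentially the same route as the paper: substitute \eqref{eqn:defineweighting} into the weight equation, apply Corollaries~\ref{cor:FirstCor} and~\ref{cor:SecondCor}, cancel the constant $1$ and the nonzero scalar factor, and then compare coefficients of the $\psit_i(s)$ using their linear independence. You actually supply more detail than the paper does --- the analyticity argument for linear independence on $[0,R)$, the explicit reversibility for the ``only if'' direction, and the reference to Section~\ref{section:ExamplesInBesselSpace} for membership in $H^{-(p+1)}(\R^n)$ --- all of which are apt.
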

Solving this linear system will lead to expression for the magnitude, as from~\eqref{eqn:magbetazero} we have that the magnitude is given by $\left|\ball \right|=
  \frac{1}{n!}\left(
   R^n +n\beta_0(R) R^{n-1}
   \right)$.
We will see below how to use this to get a formula for the magnitude without explicitly finding $\beta_0(R)$, but first it is worth pausing for an example.
\subsection{An example}
It is very easy from the above theorem to get a computer algebra system such as SageMath calculate the entries in the linear system and solve it (for instance, see~\cite{Willerton:SageWorksheet}).  The entries are almost polynomials in the sense that multiplying through by $R^{p-1}$ will give all entries being integer polynomials which means that computation can be done quite efficiently.

For example, we can look at the case where $n=5$, i.e.~$p=2$.  There the system looks as follows.
\begin{multline*}
\biggl(\begin{smallmatrix}
R^{2} + R & -R^{2} - 3R - 3 & {(R^{3} + 5R^{2} + 12R +
12)}/{R} \\
R^{3} + 3R^{2} + 3R & -R^{3} - 4R^{2} - 9R - 9 & {(R^{4} +
5R^{3} + 17R^{2} + 36R + 36)}/{R} \\
R^{4} + 6R^{3} + 15R^{2} + 15R & -R^{4} - 6R^{3} - 21R^{2} - 45R -
45 &{(R^{5} + 6R^{4} + 27R^{3} + 87R^{2} + 180R + 180)}/{R}
\end{smallmatrix}\biggr)
\biggl(
\begin{smallmatrix}
\beta_0(R)\\
\beta_1(R)\\
\beta_{2}(R)
\end{smallmatrix}
\biggr)
\\=
\biggl(
\begin{smallmatrix}
R^{2} + 3R + 3 \\
R^{3} + 6R^{2} + 15R + 15 \\
R^{4} + 10R^{3} + 45R^{2} + 105R + 105
\end{smallmatrix}
\biggr)
\end{multline*}
And the solution is
\[
\begin{pmatrix}
\beta_0(R)\\
\beta_1(R)\\
\beta_{2}(R)
\end{pmatrix}
=
\frac{1}{(R + 3)R^4}
\begin{pmatrix}
{3R^5 + 27R^4 + 105R^3 + 216R^2 + 216R + 72}\\
 (3R^4 + 29R^3 + 108R^2 + 180R + 120)R\\
 (R^3 + 9R^2 + 27R +24)R^2
\end{pmatrix}.
\]
We are really interested in the magnitude and this can be calculated using the formula~\eqref{eqn:magbetazero}:
\begin{align*}
  \left|B_R^5 \right|
   &=
  \frac{1}{n!}\left(
   R^n +n\beta_0(R) R^{n-1}
   \right),
   \\
   &=
   \frac{R^{5}}{5!}+\frac{3R^5+27R^4+105R^3+216R^2+216R+72}{4!(R+3)}
   \\
   &=
    \frac{1}{5!} \frac{R^{6} + 18R^{5} + 135R^{4} + 525R^{3} + 1080R^{2} + 1080R + 360}{R + 3}.
\end{align*}
Reassuringly, this is the same as the formula given at the beginning of the paper.


\subsection{Hankel determinant formula}
We can now get to the formula for magnitude in terms of Hankel determinants of reverse Bessel polynomials.

We know from~\eqref{eqn:magbetazero} that the magnitude of an odd dimensional ball satisfies the following equation
\[
  -nR^{n-1}\beta_0 (R) +n!
  \left|\ball \right|
  =
  R^n.
\]
We can include this in the linear system from Theorem~\ref{thm:linearsystem} to get the extended linear system
\begin{equation}
\label{eq:ExtendedLinearSystem}
\begin{pmatrix}
\chi_p(R)& \deltap\chi_{p}(R)& \dots&\deltap^p\chi_p(R) &0\\
\chi_{p+1}(R)& \deltap\chi_{p+1}(R)& \dots&\deltap^p\chi_{p+1}(R)&0\\
\vdots&\vdots&&\vdots&\vdots\\
\chi_{2p}(R)& \deltap\chi_{2p}(R) &\dots&\deltap^p\chi_{2p}(R)&0\\
-nR^{n-1}&0 &\dots&0&n!
\end{pmatrix}
\begin{pmatrix}
\beta_0(R)\\
\beta_1(R)\\
\vdots\\
\beta_{p}(R)\\
\left|\ball \right|
\end{pmatrix}
=
\begin{pmatrix}
\chi_{p+1}(R)/R\\
\chi_{p+2}(R)/R\\
\vdots\\
\chi_{2p+1}(R)/R\\
R^n
\end{pmatrix}.
\end{equation}
Cramer's Rule then gives us a formula for the magnitude $\left|\ball \right|$ as a ratio of determinants.  Then by various properties of the reverse Bessel polynomials and sequences of row and column operations that are detailed in Section~\ref{section:HankelDets} we obtain the following theorem.

\begin{thm}\label{thm:hankel}  For $n=2p+1$ the magnitude of the $n$-dimensional radius~$R$ ball can be expressed as the following ratio involving Hankel determinants:
\[|B_R^n|=\frac{\det [\chi_{i+j+2}(R)]_{i,j=0}^p}{n!\,R\det[\chi_{i+j}(R)]_{i,j=0}^p}.\]
\end{thm}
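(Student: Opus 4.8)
The plan is to read $\left|\ball\right|$ off the extended linear system~\eqref{eq:ExtendedLinearSystem} by Cramer's rule and then to recognise the two determinants that appear. The coefficient matrix of~\eqref{eq:ExtendedLinearSystem} is invertible, since a weight distribution of the form~\eqref{eqn:defineweighting} exists and Theorem~\ref{thm:linearsystem} matches its coefficients $\beta_j$ with the unique solution of the system. Solving for the last unknown, $\left|\ball\right|$, Cramer's rule writes it as a ratio of two $(p+2)\times(p+2)$ determinants. The denominator is the determinant of the coefficient matrix, which, expanded along its last column (whose only nonzero entry is the bottom $n!$), equals $n!\det M$, where $M=\bigl[\deltap^j\chi_{p+i}(R)\bigr]_{i,j=0}^p$ is the matrix of Theorem~\ref{thm:linearsystem}. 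The numerator is $\det\mathcal M$, where $\mathcal M$ is the coefficient matrix with its last column replaced by the right-hand side $\bigl(\chi_{p+1}(R)/R,\dots,\chi_{2p+1}(R)/R,\,R^n\bigr)^{\mathsf T}$. So the whole task is to show
\[
\det M=R^{p(p+1)/2}\det[\chi_{i+j}(R)]_{i,j=0}^p
\quad\text{and}\quad
\det\mathcal M=R^{p(p+1)/2-1}\det[\chi_{i+j+2}(R)]_{i,j=0}^p.
\]

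The algebraic engine is Lemma~\ref{lemma:delta}, a closed form for $\deltap^j\chi_k$ as a combination of reverse Bessel polynomials of lower index. First I would establish the base case: for $k\ge1$,
\[
\deltap\chi_k(R)=-R\,\chi_{k-1}(R)+c(R)\,\chi_k(R)
\]
with $c$ not depending on $k$. This comes from combining the relation $\chi_k(r)=r^{2k}\e^r\psi_k(r)$ of Proposition~\ref{prop:chiAndpsi} and the recursion $\psi_{k+1}=-\tfrac1r\psi_k'$ (which together give a first-order relation among $\chi_k$, $\chi_k'$ and $\chi_{k+1}$) with the three-term recurrence~\eqref{eq:ChiRecursion} to replace $\chi_{k+1}$ by $\chi_{k-1}$. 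Since $\deltap$ is a first-order differential operator it obeys $\deltap(g\chi_m)=g'\chi_m+g\,\deltap\chi_m$, so iterating the base case and tracking the lowest-index term gives, for $0\le i,j\le p$,
\[
\deltap^j\chi_{p+i}(R)=\sum_{\ell=0}^{j}a^{(j)}_\ell(R)\,\chi_{p+i-\ell}(R),\qquad a^{(j)}_j(R)=(-R)^j,
\]
where in the course of this iteration $\deltap$ is only ever applied to $\chi_m$ with $m\ge1$, so the base case is always available.

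For the denominator this identity is exactly a factorisation $M=CA$, with $C=[\chi_{p+i-\ell}(R)]_{i,\ell=0}^p$ and $A=[a^{(j)}_\ell(R)]_{\ell,j=0}^p$ upper triangular with $\det A=\prod_{j=0}^p(-R)^j=(-R)^{p(p+1)/2}$. Reversing the order of the columns of $C$ turns it into the Hankel matrix $[\chi_{i+j}(R)]_{i,j=0}^p$ and contributes the sign $(-1)^{p(p+1)/2}$, which cancels that of $\det A$; hence $\det M=R^{p(p+1)/2}\det[\chi_{i+j}(R)]_{i,j=0}^p$. The identical computation with all indices raised by $2$ gives $\det\bigl[\deltap^j\chi_{p+i+2}(R)\bigr]_{i,j=0}^p=R^{p(p+1)/2}\det[\chi_{i+j+2}(R)]_{i,j=0}^p$, so the theorem reduces to the single determinant identity $R\det\mathcal M=\det\bigl[\deltap^j\chi_{p+i+2}(R)\bigr]_{i,j=0}^p$.

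I expect this last identity to be the main obstacle, and it is the heart of Section~\ref{section:HankelDets}. The plan is to clear the border of $\mathcal M$ by elementary operations: cofactor-expanding $\det\mathcal M$ along its last row already reduces it to $R^n\det M$ plus a multiple of the determinant of the matrix obtained from $M$ by replacing its first column with $\bigl(\chi_{p+1+i}(R)/R\bigr)_{i=0}^p$, and then the three-term recurrence~\eqref{eq:ChiRecursion}---which expresses $\chi_{k+1}/R$ and $\chi_{k+2}$ through the $\chi_{k-1},\chi_k$ already present---is used to push every index up by $2$ and recombine into $\det[\chi_{i+j+2}(R)]$. The delicate point is that the coefficients in~\eqref{eq:ChiRecursion} depend on the index $k$, hence on the row, so choosing the right sequence of row and column operations and then tracking the accumulated signs and powers of $R$ is what needs care; the outcome can be checked against the $p=1,2$ cases and the SageMath notebook~\cite{Willerton:SageWorksheet}.
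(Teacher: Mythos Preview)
Your approach is essentially the paper's: Cramer's rule on the extended system~\eqref{eq:ExtendedLinearSystem}, then Lemma~\ref{lemma:delta} to reduce each determinant to Hankel form. Your treatment of the denominator is in fact a cleaner packaging of the paper's column operations --- your factorisation $M=CA$ with $A$ upper triangular is exactly what the paper achieves step by step, and your sign bookkeeping is correct.

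For the numerator you correctly isolate the obstacle but your sketch stops short of the actual mechanism. The paper does \emph{not} try to relate $\det\mathcal M$ directly to $\det[\deltap^j\chi_{p+i+2}]$; instead, after combining the two cofactors into a single first column $\chi_{p+i}+n\chi_{p+i+1}/R^2$, it uses the recursion~\eqref{eq:ChiRecursion} with $n=2p+1$ to rewrite this as $\chi_{p+i+2}/R^2+2i\,\chi_{p+i+1}/R^2$, and then --- the key observation your sketch misses --- shows that \emph{every} entry $N_{i,j}$ for $j\ge1$ can be rewritten as
\[
N_{i,j}=-\deltap^{j-1}\bigl(R^{-1}\chi_{p+i+1}\bigr)-2i\,\deltap^{j-1}\bigl(R^{-1}\chi_{p+i}\bigr),
\]
so that each row is (something with index shifted by $1$) plus $2i$ times (the previous row's pattern). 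Row operations then kill the $2i$ terms uniformly, leaving a matrix whose columns are $\deltap^{j-1}(R^{-1}\chi_{p+i+1})$; a second round of column operations, now using part~\ref{lemma:deltathird} of Lemma~\ref{lemma:delta} (the formula for $\deltap^j(R^{-1}\chi_m)$, which you do not invoke), finishes the reduction to Hankel form. So the ``right sequence'' you anticipate is: one pass of row operations governed by the row index $2i$, then one pass of column operations governed by Lemma~\ref{lemma:delta}\ref{lemma:deltathird}.
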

%

We have now achieved the second of our goals.  This formula is a very compact and beautiful way of expressing the magnitude of an odd ball.  The formula, however, remains somewhat mysterious.  It is not clear why such a formula should exist.  Certainly the heavily computational way it is obtained in Section~\ref{section:HankelDets} does not shed any light on this.  I was able to guess the formula as I spotted  patterns in some other calculations.

All that aside, this formula does give an easy and memorable way to implement the calculation of magnitude in computer algebra such as SageMath or Maple.  However,  as it stands it does not tell us much about the actual polynomials in the numerator or denominator.  For instance, I do not know how to show directly from the above definition what the degrees of the polynomials are or that the coefficients are all positive.  Fortunately, there are deep connections between Hankel determinants, continued fractions and combinatorics that can be exploited here.

\subsection{Combinatorial formulae via Schr\"oder paths}
There is a classical theory allowing the calculation of the Hankel determinants of a sequence of \emph{numbers} $(a_i)_{i=0}^{\infty}$ using a continued fraction expansion of the generating function $\sum_i a_i t^i$ of the sequence.  A stumbling block in using this approach in the current context is that this usually gives a simple factorization of the Hankel determinants whereas the Hankel determinants here turn out to typically be {irreducible} polynomials (up to trivial factors of $R$).  This basically means that the generating function of the reverse Bessel polynomials cannot have a continued fraction expansion of the usual Stieltjes-type or Jacobi-type.  There has been more recent work on the case when the sequence consists of polynomials rather than numbers; in particular, Alan Sokal has a large work in progress~\cite{Sokal:ContinuedFractionsBook}.   I asked Sokal about the reverse Bessel polynomials, and he was able to find a Thron-type continued fraction expansion (see Theorem~\ref{thm:ContinuedFraction}).  This made the reverse Bessel polynomials the first non-trivial example he had come across of a  sequence of polynomials which has a Thron expansion but no Stieltjes or 
Jacobi expansion.
 
 With this continued fraction expansion in hand, one can apply combinatorial techniques --- such as the Karlin-McGregor-Lindstr\"om-Gessel-Viennot Lemma (Theorem~\ref{thm:KMLGV}) --- to give formulae for the Hankel determinants appearing in the magnitude in terms of counting weighted `Schr\"oder paths'.   We see in Theorem~\ref{thm:CombinatorialDeterminant} that
 \begin{align*}
  \det\left[\chi_{i+j}(R)\right]_{i,j=0}^p
  &=
  R^p\super(p)\sum_{\sigma\in X_{p-1}}W_0(\sigma, R),
  \\
  \det\left[\chi_{i+j+2}(R)\right]_{i,j=0}^p
  &=
  R^{p+1}\super(p)\sum_{\sigma\in X_{p+1}}W_2(\sigma, R),
\end{align*}
where the sums are over certain collections of Schr\"oder paths.
In fact, each summand in the two sums above is a monomial so it is possible to give combinatorial expressions for each of the coefficients in the numerator and denominator polynomials.   These expressions are used in Section~\ref{section:SchroderPaths} to prove all of the remaining results about the magnitude of odd balls in the Main Theorem.

\subsection{Further observations about the magnitude of odd balls}
We finish the introduction with a few observations which merit additional study.

You might have noticed that the numerator and denominator at low degree are unimodal --- so the coefficients rise to a maximum and then fall --- or more specifically are log-concave --- a polynomial with positive coefficients $\sum_i a_i R^i$ is log-concave  if $a_i^2\ge a_{i-1} a_{i+1}$ for all $i$.  According to SageMath, both $N_p(R)$ and $D_p(R)$ are log-concave for $p\le 22$; however, I do not even know how to prove that they are unimodal.  Sokal~\cite{Sokal:ContinuedFractionsBook} has other examples of Hankel determinants of sequences of enumerative polynomials that appear, according to computer calculations, to be log-concave, so this is perhaps a more general phenomenon.

To give an example, in Figure~\ref{fig:LogConcavity}, the logarithms of the coefficients in the numerator polynomial $N_{22}(R)$ are plotted and they are seen to form a concave set of points.  Plotting the coefficients of the denominators gives similar looking results.  An alternative way to think of the polynomials is in terms of their roots.  The  roots of $N_{22}(R)$ as pictured in Figure~\ref{fig:LogConcavity} were calculated using high-precision arithmetic in SageMath~\cite{Willerton:SageWorksheet}.  The pattern is typical for all numerators and denominators that have been calculated (i.e.~up to $p=22$) in that the roots all lie in the sector $\{z\mid 3\pi/4< \arg(z)<5\pi/4\}$.
This gives added weight to the idea that the polynomials are log-concave as any real polynomial with all of its roots in the sector $\{z\mid 2\pi/3< \arg(z)<4\pi/3\}$ is log-concave (see~\cite[Propostion~7]{Stanley:LogConcave}).   
 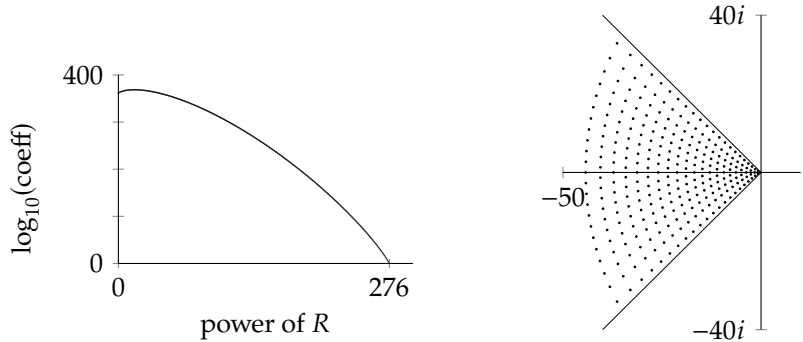
\begin{figure}[h]
\begin{center}
\begin{tikzpicture}
\begin{axis}[
width = 0.45\textwidth,
axis x line=bottom, axis y line = left,
xmax=300,
ymax=400, 
xtick={0,276}, xticklabels={$0$ ,$276$},
ytick={0,100,...,400}, yticklabels={$0$,,,,$400$},
x axis line style={style = -},y axis line style={style = -},
xlabel={power of $R$},ylabel={$\log_{10}(\textrm{coeff})$},
yscale=0.8,
legend style={at={(1,0.1)},anchor=south east}
]

\addplot[mark size=0.1pt, only marks] file {numerator_log_coeffs.dat};
\end{axis}

\end{tikzpicture}
\qquad\qquad
\begin{tikzpicture}
\begin{axis}[
width = 0.55\textwidth,
axis equal image=true,
axis x line=middle, axis y line = center,
xmin=-50, xmax=10, 
xtick={-50}, xticklabels={$-50$},
ytick={-40,40}, yticklabels={$-40i$,$40i$},
x axis line style={style = -},y axis line style={style = -},
legend style={at={(1,0.1)},anchor=south east}
]
\addplot[mark size=0.3pt, only marks] file {numerator_roots.dat};
\addplot[mark=none,black,ultra thin, domain=-40:0]  {x};
\addplot[mark=none,black,ultra thin, domain=-40:0]  {-x};
\end{axis}
\end{tikzpicture}
\end{center}
\caption{On the left is a graph of the base-10 logarithm of the coefficients of the polynomial $N_{22}(R)$, which is the numerator of the magnitude of the $45$-dimensional ball; this clearly illustrates the log-concavity.  On the right is a plot in the Argand plane of the complex roots of $N_{22}(R)$; these all lie between the pictured rays $\{z\mid \arg(z)=3\pi/4\}$ and $\{z\mid \arg(z)=5\pi/4\}$.  Both sets of data were computed numerically with SageMath.}
\label{fig:LogConcavity}
\end{figure}

Two further intriguing observations are the following:
\begin{enumerate}
\item 
the derivative $\frac{\dd|B_R^n|}{\dd R}$ appears to involve the Hankel determinant with a shift of one, $\det [\chi_{i+j+1}(R)]_{i,j=0}^p$;
\item 
the second logarithmic derivative $\frac{\dd^2\log|B_R^n|}{\dd R^2}$ appears to be minus one times a ratio of polynomials with positive coefficients,  implying, in particular, that $|B_R^n|$ is log-concave as a function of $R$ (investigating this was suggested by Mark Meckes). 
\end{enumerate}
You can verify these observations by using the SageMath notebook~\cite{Willerton:SageWorksheet}. 
It looks like there is still plenty of structure to be found in the magnitude of odd balls.

\section{Bessel potential spaces and weight distributions}
\label{section:BesselPotential}
In this section we will prove the results we need about Bessel potential spaces, proving Theorem~\ref{thm:MagnitudeWeighting} about the characterization of weight distributions for subsets of $\R^n$ and proving that the individual terms in our guess for the weight distribution of an odd ball are all in the correct Bessel potential space.

\subsection{Characterization of weight distributions in $\R^n$}
Here we will give the proof of Theorem~\ref{thm:MagnitudeWeighting} due to Meckes~\cite{Meckes:PersComm}.

We prove the required properties of a weight distribution by using properties of a corresponding potential function proved by Barcel\'o and Carbery in~\cite{BarceloCarbery}.  In~\cite[Section~5]{Meckes:MagnitudeDimensions} Meckes showed that the Bessel potential space  $H^{-i}(\R^n)$ is the topological linear dual of the following \define{Sobolev space}  of functions, equipped with a certain norm most easily defined in terms of the Fourier transform:
\[H^{i}(\mathbb{R}^{n}) := 
 \left\{f \in L^2(\mathbb{R}^{n})\bigm|  
 \tfrac{\partial^\alpha }{\partial x^\alpha}f\in L^2(\mathbb{R}^{n})\text{ for all multi-indices $\alpha$ with }|\alpha|\le i \right\}.\]
The norm  $\| {{}\cdot{}}\|_{H^i}$ this is equipped with is equivalent to, but not equal to, the more usual Hilbert space norm.
 \[ \| f\|_{\mathrm{Hilb}}:=\sqrt{\sum_{|\alpha|\le i}\left\| \tfrac{\partial^\alpha}{\partial x^\alpha} f \right\|^2_{L^2(\R^n)}}.\] 

Meckes shows that the canonical isometry $Z\colon H^{-(p+1)}(\R^n)\to H^{p+1}(\R^n)$  between dual spaces satisfies the following.  Suppose that $w\in H^{-(p+1)}(\R^n)$ is a distribution and $h\in H^{p+1}(\R^n)$ is a function with  $h = Zw$ then 
 \[
  h(x) = \langle w,e^{-\mathrm{d}(x,\cdot)}\rangle
  \quad
  \text{and}
  \quad
  w = \frac{1}{n!\,\omega_n}(I-\Delta)^{p+1} h,
\]
where $\Delta$ is the distributional Laplacian on $\R^n$.

Now suppose that the distribution $w\in H^{-(p+1)}(\R^n)$  satisfies the hypotheses of the theorem,
then 
\begin{equation*}
    h(x) = \langle w,e^{-\mathrm{d}(x,\cdot)}\rangle = 1
    \quad \text{for all $x\in \interior{K}$}.
\end{equation*}
But since all functions in $H^{p+1}(\R^n)$ are
continuous we have 
\begin{equation}
    h(x)  = 1
    \quad \text{for all $x\in{K}$}.
        \label{eq:Meckes3}
\end{equation}

If $f$ is compactly supported in the complement of $K$, then
\begin{equation*}
    \langle(I-\Delta)^{p+1} h, f\rangle = n!\,\omega_n \langle w, f\rangle = 0,
\end{equation*}
as $w$ is supported in $K$, so 
 \begin{equation}
    (I-\Delta)^{p+1} h
= 0 \quad\text{in the weak sense on $\R^n\backslash K$.}
  \label{eq:Meckes4}
\end{equation}

By \cite[Proposition~2]{BarceloCarbery}, there is a unique $h$ in
$H^{p+1}(\R^n)$ satisfying equations~\eqref{eq:Meckes3} and~\eqref{eq:Meckes4}, so by \cite[Proposition~5.7]{Meckes:MagnitudeDimensions}, $h$ is
the potential function of $K$, and therefore $w$ is the weighting
of $K$.

The final part of the theorem, equation~\eqref{eqn:magdefn}, follows from~\cite[Proposition~4.2]{Meckes:MagnitudeDimensions}.

\subsection{Examples of distribution in $H^{-(i+1)}(\R^n)$}
\label{section:ExamplesInBesselSpace}
Here we give examples of distributions in  Bessel potential spaces.  These in fact show that our putative weight distribution $w$, defined by \eqref{eqn:defineweighting}, lives in $H^{-(p+1)}(\R^n)$ as required.

The following lemma  was suggested by Michael Renardy~\cite{Renardy:Mathoverflow} as part of the strategy to prove the theorem which follows. 
\begin{lemma}
Suppose $M$ is a compact subset of $\R^n$ and we have a continuous differential operator of order~$i$ of the form $\mathcal{D}=\sum_{|\alpha|\le i} e_\alpha\tfrac{\partial^\alpha}{\partial x^\alpha}$, where $\alpha$ runs over multi-indices and each $e_\alpha$ is a function on $M$.  Then the distribution $w$ defined by 
\[
  \langle w,f\rangle:=
  \int_M\mathcal{D}f \,\mathrm{d}\mathrm{vol}_{\R^n}
\]
lies in the Bessel potential space $H^{-i}(\R^n)$.
\end{lemma}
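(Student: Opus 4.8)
The plan is to realise the functional $\langle w, f\rangle = \int_M \mathcal{D}f\,\mathrm{d}\mathrm{vol}_{\R^n}$ as a finite sum of distributional derivatives of (finite, compactly supported) measures, and then invoke the standard fact that differentiation of order $k$ maps $H^{-i}(\R^n)$ into $H^{-i-k}(\R^n)$ continuously --- equivalently, the Fourier-analytic description of $H^{-i}$ makes this transparent. First I would fix the compact set $M$ and note that the Lebesgue measure restricted to $M$, call it $\mu_M := \mathbf{1}_M\,\mathrm{d}\mathrm{vol}_{\R^n}$, is a finite compactly supported (signed, in fact positive) measure on $\R^n$. A finite compactly supported measure is a tempered distribution whose Fourier transform is a bounded function (indeed $\widehat{\mu_M}(\xi) = \int_M e^{-2\pi i \xi\cdot x}\,\mathrm{d}x$ is continuous and bounded by $\mathrm{vol}(M)$), hence $(1+\|\xi\|^2)^{-0/2}\widehat{\mu_M}$ is certainly bounded; since $\widehat{\mu_M}$ decays at least enough — actually boundedness alone does not give $L^2$, so one needs the regularising weight — we get $(1+\|\xi\|^2)^{-i/2}\widehat{\mu_M} \in L^2(\R^n)$ as soon as $2i > n$...

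Let me correct that: boundedness of $\widehat{\mu_M}$ gives $\mu_M \in H^{-i}(\R^n)$ precisely when $(1+\|\xi\|^2)^{-i}$ is integrable, i.e. when $i > n/2$. This is \emph{not} assumed, so I would instead argue directly. The cleaner route: for each multi-index $\alpha$ with $|\alpha|\le i$, the distribution $f\mapsto \int_M e_\alpha \tfrac{\partial^\alpha}{\partial x^\alpha}f\,\mathrm{d}\mathrm{vol}$ equals, by definition of distributional derivative, $(-1)^{|\alpha|}\tfrac{\partial^\alpha}{\partial x^\alpha}(e_\alpha \mu_M)$ in $\mathcal{S}'(\R^n)$ (using that $e_\alpha$ is, say, smooth, or at worst bounded measurable so that $e_\alpha\mu_M$ is still a finite compactly supported measure). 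So $w = \sum_{|\alpha|\le i}(-1)^{|\alpha|}\partial^\alpha(e_\alpha\mu_M)$. It therefore suffices to show each $\partial^\alpha(e_\alpha\mu_M) \in H^{-i}(\R^n)$ when $|\alpha|\le i$. On the Fourier side, $\widehat{\partial^\alpha(e_\alpha\mu_M)}(\xi) = (2\pi i\xi)^\alpha\,\widehat{e_\alpha\mu_M}(\xi)$, and $\widehat{e_\alpha\mu_M}$ is bounded (finite compactly supported measure again). Then
\[
\int_{\R^n}\bigl|(1+\|\xi\|^2)^{-i/2}\,(2\pi i\xi)^\alpha\,\widehat{e_\alpha\mu_M}(\xi)\bigr|^2\,\mathrm{d}\xi
\le C\int_{\R^n}\frac{\|\xi\|^{2|\alpha|}}{(1+\|\xi\|^2)^{i}}\,|\widehat{e_\alpha\mu_M}(\xi)|^2\,\mathrm{d}\xi,
\]
and since $|\alpha|\le i$ the factor $\|\xi\|^{2|\alpha|}/(1+\|\xi\|^2)^i$ is bounded, so this is $\le C'\int |\widehat{e_\alpha\mu_M}(\xi)|^2\,\mathrm{d}\xi$. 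But that integral need not be finite — a compactly supported measure's Fourier transform is bounded but generally not $L^2$. So I still need genuine decay of $\widehat{e_\alpha\mu_M}$.

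The resolution, and the step I expect to be the main obstacle, is to use that $\mu_M$ has a density (the indicator $\mathbf{1}_M$) which, while not smooth, lies in $L^2(\R^n)$ — indeed $\mathbf{1}_M\in L^p$ for all $p$ since $M$ is bounded. Hence $\widehat{e_\alpha\mathbf{1}_M}\in L^2(\R^n)$ by Plancherel (taking $e_\alpha$ bounded, so $e_\alpha\mathbf{1}_M\in L^2$). That is the key point that was missing: $\mu_M$ is not just a finite measure but an $L^2$ function with compact support, so $\mu_M\in H^0(\R^n)=L^2(\R^n)$, and then $\partial^\alpha\mu_M\in H^{-|\alpha|}(\R^n)\subseteq H^{-i}(\R^n)$ for $|\alpha|\le i$ by the displayed estimate with $\widehat{e_\alpha\mathbf{1}_M}\in L^2$ now legitimate. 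Summing over $\alpha$ and using that $H^{-i}(\R^n)$ is a vector space finishes the proof. To keep the exposition self-contained I would (i) record the Fourier description of $H^{-i}$ and its pairing with $H^i$, already quoted in the text; (ii) verify $e_\alpha\mathbf{1}_M\in L^2(\R^n)$ from compactness of $M$ and local boundedness of the coefficients; (iii) do the one-line weight estimate $\sup_\xi \|\xi\|^{2|\alpha|}(1+\|\xi\|^2)^{-i}<\infty$ for $|\alpha|\le i$; (iv) assemble. The only subtlety worth flagging is the regularity needed on the coefficient functions $e_\alpha$ — bounded measurable on a neighbourhood of $M$ suffices, which certainly covers the constant or smooth coefficients that arise for the normal-derivative-on-a-sphere distributions in \eqref{eqn:defineweighting}.
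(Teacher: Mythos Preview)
Your final argument is correct: once you recognise that $e_\alpha\mathbf{1}_M\in L^2(\R^n)$ (by boundedness of $e_\alpha$ on the compact $M$), Plancherel gives $\widehat{e_\alpha\mathbf{1}_M}\in L^2$, and then the weight estimate $\|\xi\|^{2|\alpha|}(1+\|\xi\|^2)^{-i}\le C$ for $|\alpha|\le i$ finishes the job. The two false starts should of course be excised from a clean write-up, but the eventual proof is sound.

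The paper takes a different and rather shorter route. Instead of working on the Fourier side, it invokes the duality $(H^i)^*=H^{-i}$ directly: one only needs a bound $|\langle w,f\rangle|\le\Lambda\|f\|_{H^i}$. Writing $E_{\mathcal D}=\max_\alpha\sup_M |e_\alpha|$ (finite by continuity and compactness), the paper estimates
\[
|\langle w,f\rangle|
\le E_{\mathcal D}\sum_{|\alpha|\le i}\int_M\bigl|\partial^\alpha f\bigr|
\le E_{\mathcal D}\,\mathrm{vol}(M)^{1/2}\sum_{|\alpha|\le i}\|\partial^\alpha f\|_{L^2(\R^n)}
= E_{\mathcal D}\,\mathrm{vol}(M)^{1/2}\|f\|_{H^i},
\]
using Cauchy--Schwarz on $M$. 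This avoids any Fourier analysis or distributional manipulation. Your approach, by contrast, exhibits $w$ explicitly as a sum of order-$\le i$ derivatives of $L^2$ functions, which is structurally informative and makes the inclusion $H^{-|\alpha|}\subseteq H^{-i}$ visible; the paper's approach trades that structural content for brevity.
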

\begin{proof}
As $H^{-i}(\R^n)$ is dual to the Sobolev space $H^{i}(\R^n)$, as mentioned in the previous subsection, it suffices to show that there exists a $\Lambda\in \R$ such that for any smooth function $f\in H^i(\R^n)$ we have
\[
  \left| \langle w, f\rangle\right| \le \Lambda \| f\|_{H^i}.
\]
But as mentioned above, the norm $\| {\cdot}\|_{H^i}$ is equivalent to the Hilbert space norm $\| {\cdot}\|_{\mathrm{Hilb}}$ which is known to be equivalent to the norm $\| {\cdot}\|_{\mathrm{Alt}}$ given by 
\[
  \| f\|_{\mathrm{Alt}}:=
  \sum_{|\alpha|\le i}
  \left\| \tfrac{\partial^\alpha}{\partial x^\alpha} f \right\|_{L^2(\R^n)}.
\]  
So it suffices that to show that there exists a $\Lambda\in \R$ such that for any smooth function $f\in H^i(\R^n)$ we have
\[
  \left| \langle w, f\rangle\right| \le \Lambda \| f\|_{\mathrm{Alt}}.
\]

Let $E_\mathcal{D}:=\max_{|\alpha|\le i}\sup_{x\in M} e_\alpha(x)$; this is finite as $\mathcal{D}$ is continuous and $M$ is compact.  We have
\begin{align*}
  \left| \langle w, f\rangle\right|
  &=
  \biggl|\int_M\mathcal{D}f \,\mathrm{d}\mathrm{vol}_{\R^n} \biggr|
  =
  \biggl|\int_M\sum_{|\alpha|\le i} e_\alpha\tfrac{\partial^\alpha}{\partial x^\alpha} f \,\mathrm{d}\mathrm{vol}_{\R^n} \biggr|
  \\
  &\le
  E_\mathcal{D} \sum_{|\alpha|\le i}\int_M\left| \tfrac{\partial^\alpha}{\partial x^\alpha} f \right|\,\mathrm{d}\mathrm{vol}_{\R^n} 
  \le
  E_\mathcal{D} \sum_{|\alpha|\le i}\vol(M)^{\frac 1 2} \left\| \tfrac{\partial^\alpha}{\partial x^\alpha} f \right\|_{L^2(M)} \\
  &\le
  E_\mathcal{D} \vol(M)^{\frac 1 2} \sum_{|\alpha|\le i}\left\| \tfrac{\partial^\alpha}{\partial x^\alpha} f \right\|_{L^2(\R^n)} 
  =
  E_\mathcal{D} \vol(M)^{\frac 1 2} \| f\|_{\mathrm{Alt}}~,
\end{align*}
as required, where the second-to-last inequality comes from the Cauchy-Schwarz inequality.
\end{proof}

Note in particular that the identity operator is of order $0$ and thus of order $p+1$, this means that for the $n$-dimensional ball $B_R^n\subset\R^n$ the distribution $w$ given by $\langle w,f\rangle:=
  \int_{B_R^n} f \,\mathrm{d}\mathrm{vol}_{\R^n}$ is in $H^{-(p+1)}(\R^n)$.

\begin{thm}
\label{thm:NormalIntegrationInBessel}
Suppose $\Sigma^{n-1}\subset\R^n$ is a smooth submanifold which bounds a compact subset $M\subset\R^n$.  Define the distribution $w_i$, for $0\le i\le p$, by
\[
\langle w_i, f \rangle := \int_\Sigma \frac{\partial^i f}{\partial \nu^i}  \,\mathrm{d}\mathrm{vol}_\Sigma
\]
where $\frac{\partial}{\partial \nu} $ denotes differentiation in the outward pointing normal direction.  Then $w_i \in H^{-(p+1)}(\R^n)$.
\end{thm}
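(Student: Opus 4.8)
The plan is to reduce the claim to the preceding lemma. That lemma handles distributions obtained by integrating a differential operator of bounded order over a full-dimensional compact region, whereas $w_i$ integrates a normal derivative over the hypersurface $\Sigma=\partial M$; so the first job is to trade the integral over $\Sigma$ for one over $M$, which is exactly what the divergence theorem is for.

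In detail, I would fix a tubular neighbourhood $U\subset\R^n$ of the compact smooth hypersurface $\Sigma$, extend the outward unit normal $\nu$ to a smooth vector field $N$ on $U$ --- for definiteness the gradient of the signed distance to $\Sigma$, though the precise choice is immaterial --- and write $\mathcal{N}:=N\cdot\nabla$, so that the $i$-th normal derivative along $\Sigma$ is $\mathcal{N}^i f$ (any other sensible convention differs from this by a differential operator of order $\le i$ and changes nothing below). Finally I would pick a smooth cut-off $\phi\colon\R^n\to[0,1]$ supported in a compact subset of $U$ with $\phi\equiv 1$ on $\Sigma$. Applying the divergence theorem to the vector field $V:=\phi\,(\mathcal{N}^i f)\,N$, which is smooth near $M$ and satisfies $V\cdot\nu=\mathcal{N}^i f=\tfrac{\partial^i f}{\partial\nu^i}$ on $\Sigma$ (there $\phi=1$ and $N\cdot\nu=|\nu|^2=1$), gives
\[
  \langle w_i, f\rangle=\int_\Sigma \frac{\partial^i f}{\partial\nu^i}\,\mathrm{d}\mathrm{vol}_\Sigma=\int_\Sigma V\cdot\nu\,\mathrm{d}\mathrm{vol}_\Sigma=\int_M (\nabla\cdot V)\,\mathrm{d}\mathrm{vol}_{\R^n}.
\]
Expanding $\nabla\cdot V=\nabla\cdot\bigl(\phi\,(\mathcal{N}^i f)\,N\bigr)$ with the Leibniz rule exhibits it as $\mathcal{D}f$ for a differential operator $\mathcal{D}=\sum_{|\alpha|\le i+1}e_\alpha\,\tfrac{\partial^\alpha}{\partial x^\alpha}$ whose coefficients $e_\alpha$ are built polynomially from $\phi$, the components of $N$, and their first derivatives; extended by zero off $U$ these are smooth with compact support, hence continuous on $\R^n$. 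Since $i+1\le p+1$, the operator $\mathcal{D}$ has order at most $p+1$, so the preceding lemma, applied with this $M$ and this $\mathcal{D}$, shows that $f\mapsto\int_M\mathcal{D}f\,\mathrm{d}\mathrm{vol}_{\R^n}$ lies in $H^{-(p+1)}(\R^n)$; this functional agrees with $w_i$ on the dense subspace of smooth functions, so $w_i\in H^{-(p+1)}(\R^n)$. (Equivalently, the lemma gives $w_i\in H^{-(i+1)}(\R^n)$, and $H^{-(i+1)}(\R^n)\subseteq H^{-(p+1)}(\R^n)$ since $i\le p$.)

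The only real obstacle is the bookkeeping in that middle step: one must check carefully that the intrinsically $\Sigma$-defined operator $\tfrac{\partial^i}{\partial\nu^i}$, once combined with the divergence, becomes an honest order-$(i+1)$ differential operator on all of $\R^n$ with continuous coefficients; the normal extension $N$ and the cut-off $\phi$ are introduced purely to make this work. Everything after that is an invocation of the lemma together with the obvious inclusions of Bessel potential spaces. As an alternative for readers who prefer soft analysis, I would also note that the divergence theorem can be avoided: by the trace theorem the map $f\mapsto\tfrac{\partial^i f}{\partial\nu^i}$ restricted to $\Sigma$ sends $H^{p+1}(\R^n)$ into $H^{p+1-i-1/2}(\Sigma)$, and since $i\le p$ the exponent $p+1-i-\tfrac12$ is at least $\tfrac12$, so this trace lies in $L^1(\Sigma)$ with norm at most a constant times $\|f\|_{H^{p+1}(\R^n)}$; integrating over the compact $\Sigma$ then visibly defines a bounded functional. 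I would nonetheless present the divergence-theorem version as the main argument, since it stays within the elementary framework used for the lemma.
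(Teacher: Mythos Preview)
Your proof is correct and follows essentially the same strategy as the paper's: extend the unit normal off $\Sigma$, apply the divergence theorem to rewrite the surface integral as $\int_M \mathcal{D}f\,\mathrm{d}\mathrm{vol}_{\R^n}$ for a continuous differential operator $\mathcal{D}$ of order $i+1\le p+1$, and then invoke the preceding lemma. The only cosmetic difference is the choice of vector field in the divergence theorem --- the paper takes $V=\nabla\bigl((\bar\nu\cdot\nabla)^{i-1}f\bigr)$ so that $\nabla\cdot V=\nabla^2\bigl((\bar\nu\cdot\nabla)^{i-1}f\bigr)$, whereas you take $V=\phi(\mathcal{N}^i f)N$; both yield an order-$(i+1)$ operator.
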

\begin{proof}  
%
Extend the normal vector field $\nu$ on $\Sigma$ to a smooth vector field $\bar \nu$ on $\R^n$, this can be done using the Tubular Neighbourhood Theorem so that $\bar \nu$ vanishes away from some neighbourhood of $\Sigma$.  We have $\frac{\partial }{\partial \nu}= \nu\cdot \nabla$, so using the Divergence Theorem we have
\begin{align*}
\langle w_i, f \rangle
&=
 \int_\Sigma \frac{\partial}{\partial \nu}\left(\frac{\partial^{i-1} f}{\partial \nu^{i-1}} \right) \,\mathrm{d}\mathrm{vol}_\Sigma
 =
  \int_\Sigma \nu\cdot \nabla \left(\frac{\partial^{i-1} f}{\partial \nu^{i-1}} \right) \,\mathrm{d}\mathrm{vol}_\Sigma
=
  \int_M \nabla^2 \left(\frac{\partial^{i-1} f}{\partial \bar\nu^{i-1}} \right) \,\mathrm{d}\mathrm{vol}_{\R^n}.
\end{align*}
As $\nabla^2\circ\frac{\partial^{i-1} }{\partial \bar\nu^{i-1}} $ is a continuous differential operator of order~$i+1$  and hence of order~$p+1$, because $i\le p$, so by the lemma above we have that $w_i\in H^{-(p+1)}(\R^n)$.
\end{proof}

\section{Proof of the generalized key integral}
\label{section:KeyIntegral}
In this section we wish to prove Theorem~\ref{thm:GeneralKeyIntegral}, which says that for $0\le j \le p$, $\vec{s}\in \R^{2p+1}$, with $s=|\vec{s}|$ and  $R>s\ge 0
$\begin{equation}
  \int_{\vect{x}\in S^{2p}_R} \psi_j(\left | \vect{x}-\vect{s}\right|)\,\mathrm{d}\vect{x}
  =
  (-2\pi)^p  2 \e^{-R}\sum_{i=0}^{p-j} \binom{p-j}{i}\chi_{i+p}(R) \psit_{i+j}(s).
  \label{eq:GeneralizedKeyIntegral}
\end{equation}
The result we want for solving the weight equation is the case when $j=0$.

\subsection{Sketch of the proof}
Before going in to the proof, let us sketch the proof here.

The principal ingredient is an observation of Barcel\'o and Carbery~\cite{BarceloCarbery}.  We consider the functions $\psi_i(r)$ and $\psit_i(r)$ as spherically symmetric functions on $\R^n$ (where $n=2p+1$) so that $r$ represents the radial coordinate.  We denote by $\Delta$ the Laplace operator on $\R^n$.  Then the observation is that the differential operator $(I-\Delta)$ will `move us up' each of our sequences of functions, more precisely, \[(I-\Delta)\xi_i(r)=2(p-i)\xi_{i+1}(r)\] where $\xi_i(r)$ is either $\psi_i(r)$ or $\psit_i(r)$.  From this we can show that if $\xi$ is a spherically symmetric smooth function defined in a neighbourhood of the origin and $(I-\Delta)^k\xi(r)=0$ for $p\ge k\ge 1$ then $\xi(r)$ is a linear combination of the set of functions $\{\psit_i(r)\}_{i=p-k+1}^{p}$. 

By differentiating under the integral sign we deduce that 
\[
  (I-\Delta_\vec{s})^{p-j+1}\int_{\vect{x}\in S^{2p}_R} \psi_j({\left | \vect{x}-\vect{s}\right|})\,\mathrm{d}\vect{x}
  =
0
\]
(where $\Delta_\vec{s}$ indicates that we are differentiating with respect to $\vec{s}$) and thus by the above we have
\begin{equation}
  \int_{\vect{x}\in S^{2p}_R} \psi_j({\left | \vect{x}-\vect{s}\right|})\,\mathrm{d}\vect{x}
  =
  \sum_{i=0}^{p-j} a_{p,j,i}(R) \psit_{i+j}(s),
  \label{eq:IntIsLinearCombination}
  \end{equation}
for some functions $(a_{p,j,i}(R))_{i=0}^{p-j}$, which we need to identify.

In the simplest case of $j=p$ we can substitute $s=0$ and show that the sole unknown function $a_{p,p,0}(R)$ has the form we want, thus giving the integral in this case.  We work `downwards' to $j=0$ from there.

Applying the operator $(I-\Delta_\vec{s})$ to both sides of equation~\eqref{eq:IntIsLinearCombination} we find
\[
  (p-j)
  \int_{\vect{x}\in S^{2p}_R} \psi_{j+1}(\left | \vect{x}-\vect{s}\right|)\,\mathrm{d}\vect{x}
  =
  \sum_{i=0}^{p-j}  (p-j-i)a_{p,j,i}(R)\psit_{i+j+1}(s).
\]
We assume that the integral for $\psi_{j+1}$ is in the form we want, so we know the left hand side of the above equation.  From the linear independence of the set of functions $\{\psit_i\}_i$ we can deduce the correct formulae for $\{a_{p,j,i}\}_{i=0}^{p-j-1}$, which just leaves $a_{p,j,p-j}$ and this can be calculated by substituting $s=0$ into \eqref{eq:IntIsLinearCombination}.  In this way we obtain the generalized key integral~\eqref{eq:GeneralizedKeyIntegral}, and, in particular for $j=0$, the key integral, Theorem~\ref{thm:KeyIntegral}.

\subsection{Basic properties of our functions}
Here we will obtain some useful facts about the sequences of functions $(\psi_i(r))_{i=0}^\infty$ and $(\psit_i(r))_{i=0}^\infty$.  Firstly their initial terms are as follows:
\[
  \psi_0(r)= \e^{-r}; \qquad\psit_0(r)= \cosh(r)
  \]
 Both sequences satisfy the same recurrence relation
\begin{equation}
  \xi_{i+1}(r)=-\tfrac{1}{r}\xi_{i}'(r).
  \label{eq:DiffRecursion}
\end{equation}
For $r\ne 0$, the second sequence is the even part of the first: $\psit_i(r)=\frac12(\psi_i(r)+\psi_i(-r))$.  It is not completely obvious, but is easy to prove, that $\psit(0)$ is well-defined.
\begin{prop}
\label{prop:psiTilde0}
For $i\ge 0$ the function $\psit_i(r)$ is an even function defined on all of $\R$, in particular at $r=0$.  It has Taylor expansion
\[
  \psit_i(r)=(-1)^i \sum_{k=0}^\infty \frac{r^{2k}}{(2k)!\,\cdot(2k+1)\cdot(2k+3)\cdots(2k+2i-1)},
\]
and so
\[
  \psit_i(0)= \frac{(-1)^i}{1\cdot 3\cdot 5\cdots (2i-1)} .
\]
\end{prop}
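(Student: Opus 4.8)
The plan is to establish everything by a direct induction on $i$, working with the power series and the recurrence $\psit_{i+1}(r) = -\tfrac1r \psit_i'(r)$ from~\eqref{eq:DiffRecursion}. The base case $i=0$ is immediate: $\psit_0(r) = \cosh(r) = \sum_{k\ge 0} r^{2k}/(2k)!$, which is the claimed formula with the empty product $(2k+1)\cdots(2k+2\cdot 0 - 1)$ interpreted as $1$, and it is visibly even and entire. For the inductive step, I would assume $\psit_i$ is even, entire, and given by the stated series, and then simply apply $-\tfrac1r\,\tfrac{\dd}{\dd r}$ term by term.

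First I would differentiate: if $\psit_i(r) = (-1)^i \sum_{k\ge 1} a_{i,k}\, r^{2k} + (-1)^i a_{i,0}$ with $a_{i,k} = 1/\bigl((2k)!\,(2k+1)(2k+3)\cdots(2k+2i-1)\bigr)$, then $\psit_i'(r) = (-1)^i \sum_{k\ge 1} 2k\, a_{i,k}\, r^{2k-1}$, so that $-\tfrac1r \psit_i'(r) = (-1)^{i+1} \sum_{k\ge 1} 2k\, a_{i,k}\, r^{2k-2} = (-1)^{i+1}\sum_{m\ge 0} (2m+2) a_{i,m+1}\, r^{2m}$. This is manifestly an even power series, and it has no negative powers of $r$ — that is the one point worth stating carefully, since it is what makes $\psit_{i+1}(0)$ well-defined — so it extends to an entire even function. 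It remains to check that the coefficient $(2m+2)a_{i,m+1}$ equals the claimed $a_{i+1,m}$. Writing it out, $(2m+2)a_{i,m+1} = \dfrac{2m+2}{(2m+2)!\,(2m+3)(2m+5)\cdots(2m+2i+1)} = \dfrac{1}{(2m+1)!\,(2m+3)(2m+5)\cdots(2m+2i+1)}$, using $(2m+2)/(2m+2)! = 1/(2m+1)!$. Comparing with $a_{i+1,m} = 1/\bigl((2m)!\,(2m+1)(2m+3)\cdots(2m+2i+1)\bigr)$ and noting $(2m+1)!= (2m+1)\cdot(2m)!$, the two expressions agree. Finally, setting $r=0$ in the series for $\psit_i$ kills every term with $k\ge 1$ and leaves $\psit_i(0) = (-1)^i a_{i,0} = (-1)^i/\bigl(1\cdot 3\cdot 5\cdots(2i-1)\bigr)$, again with the convention that this is $1$ when $i=0$.

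There is no real obstacle here; the only thing requiring a modicum of care is the bookkeeping of the odd-number products under the index shift $k \mapsto m = k-1$, and the observation that the factor of $2k$ coming from differentiation exactly cancels the $r^{-1}$ in the recurrence so that $\psit_{i+1}$ has no pole. One should also remark, for completeness, that the series defining $\psit_i$ has infinite radius of convergence — which follows since its coefficients are dominated by those of $\cosh$ in absolute value — so that term-by-term differentiation is justified and the resulting function is genuinely entire.
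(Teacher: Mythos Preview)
Your proof is correct and follows exactly the approach the paper takes: the paper's own proof simply says ``It suffices to prove the expression for the Taylor series. This is a straightforward induction using the recursive definition $\psit_{i+1}(r)=-\tfrac{1}{r}\psit_{i}'(r)$,'' and you have supplied precisely that induction with the coefficient bookkeeping spelled out.
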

\begin{proof}
It suffices to prove the expression for the Taylor series.  This is a straightforward induction using the recursive definition $\psit_{i+1}(r)=-\tfrac{1}{r}\psit_{i}'(r)$.
\end{proof}
The functions satisfy the following recursion relation.
\begin{thm} For $i\ge 1$, both sequences $(\psi_i(r))_{i=0}^\infty$ and $(\psit_i(r))_{i=0}^\infty$ satisfy
\[\xi_{i+1}(r)=(\xi_{i-1}(r)+(2i-1)\xi_i(r))/r^2.\]
\end{thm}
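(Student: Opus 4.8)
The plan is to recast the claimed identity in the pole-free form
$r^2\xi_{i+1}(r) = \xi_{i-1}(r) + (2i-1)\xi_i(r)$
and to prove it by induction on $i\ge 1$, handling $\xi=\psi$ and $\xi=\psit$ simultaneously, since the argument will use only the common recurrence~\eqref{eq:DiffRecursion}, namely $\xi_{i+1}=-\tfrac1r\xi_i'$, together with the base values. For $i=1$ I would verify $r^2\xi_2 = \xi_0 + \xi_1$ directly from the initial terms listed above --- for instance $r^2\psi_2 = \e^{-r}\bigl(1+\tfrac1r\bigr)=\psi_0+\psi_1$, and $r^2\psit_2 = \cosh r - \tfrac{\sinh r}{r} = \psit_0 + \psit_1$ --- or, more intrinsically, by applying $-\tfrac1r\tfrac{\dd}{\dd r}$ twice to $\xi_0$.

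For the inductive step, assume $r^2\xi_{i+1} = \xi_{i-1} + (2i-1)\xi_i$ and apply the operator $-\tfrac1r\tfrac{\dd}{\dd r}$ to both sides. On the right-hand side, \eqref{eq:DiffRecursion} turns $-\tfrac1r\xi_{i-1}'$ into $\xi_i$ and $-\tfrac1r\xi_i'$ into $\xi_{i+1}$, producing $\xi_i + (2i-1)\xi_{i+1}$. On the left-hand side, $-\tfrac1r\tfrac{\dd}{\dd r}\bigl(r^2\xi_{i+1}\bigr) = -2\xi_{i+1} - r\xi_{i+1}'$, and since $\xi_{i+1}' = -r\xi_{i+2}$ again by~\eqref{eq:DiffRecursion}, this equals $-2\xi_{i+1} + r^2\xi_{i+2}$. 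Equating the two expressions and rearranging gives $r^2\xi_{i+2} = \xi_i + (2i+1)\xi_{i+1}$, which is exactly the identity with $i$ replaced by $i+1$; this closes the induction.

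The only point needing care is that the operator manipulations, and the substitution $\xi_{i+1}' = -r\xi_{i+2}$, are a priori valid only for $r\ne 0$, whereas the statement as written divides by $r^2$; it should be read as the identity $r^2\xi_{i+1}(r)=\xi_{i-1}(r)+(2i-1)\xi_i(r)$. For $\psi$ nothing further is needed, as $\psi_i$ is only considered for $r>0$. For $\psit$ I would invoke Proposition~\ref{prop:psiTilde0}: each $\psit_i$ is analytic on all of $\R$, so an identity between such functions holding on $\R\setminus\{0\}$ persists at $r=0$; evaluating there merely recovers the compatibility relation $\psit_{i-1}(0) + (2i-1)\psit_i(0) = 0$, immediate from $\psit_i(0) = (-1)^i/\bigl(1\cdot 3\cdots(2i-1)\bigr)$. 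I do not expect a genuine obstacle: the computation is short, and this mild bookkeeping at $r=0$ is the only subtlety.
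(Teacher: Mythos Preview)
Your proof is correct and follows essentially the same approach as the paper: induction on $i$ with the base case $i=1$ checked directly, and the inductive step driven by the recursion $\xi_{i+1}=-\tfrac{1}{r}\xi_i'$. The only organizational difference is that you apply $-\tfrac{1}{r}\tfrac{\dd}{\dd r}$ to the identity as a whole, whereas the paper first substitutes the inductive hypothesis into $\xi_i$ and then differentiates; your version is slightly cleaner, and your remark about extending the $\psit$ identity to $r=0$ by analyticity is a nice touch the paper omits.
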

\begin{proof}
We will prove the formula inductively.  It is easily checked for the case $i=1$ for both sequences of functions.  Then, by the recursion formula~\eqref{eq:DiffRecursion},
\begin{align*}
  \xi_{i+1}(r)
  &=-\tfrac{1}{r}\xi_i'(r)
  \\
  &=
  -\frac{1}{r}\left(\frac{\xi_{i-2}(r)+(2i-3)\xi_{i-1}(r)}{r^2}\right)'
  \\
  &=
  -\frac{\xi_{i-2}'(r)r^2-\xi_{i-2}(r)2r+(2i-3)\xi_{i-1}'(r)r^2-(2i-3)\xi_{i-1}(r)2r}{r^5}
 \\
 &=
  -\frac{-\xi_{i-1}(r)r^2-(2i-3)\xi_{i}(r)r^2-2\left(\xi_{i-2}(r)+(2i-3)\xi_{i-1}(r)\right)}{r^4}
  \\
  &=
  \frac{\xi_{i-1}(r)r^2+(2i-3)\xi_{i}(r)r^2+2r^2\xi_{i}(r)}{r^4}
\\
  &=
  \frac{\xi_{i-1}(r)+(2i-1)\xi_{i}(r)}{r^2},
\end{align*}
and the result follows by induction.
\end{proof}
From this we can express the relation between the function $\psi_i$ and the reverse Bessel polynomial $\chi_i$.
\begin{prop} For $i\ge 0$
\label{prop:chiAndpsi}
\[\chi_i(r) = \e^r r^{2i}\psi_i(r).\]
\end{prop}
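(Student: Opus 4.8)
The statement to prove is Proposition~\ref{prop:chiAndpsi}: for $i \ge 0$, $\chi_i(r) = \e^r r^{2i} \psi_i(r)$.

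The plan is to proceed by induction on $i$, but the natural induction here is a two-step (second-order) induction, matching the fact that $\chi_i$ is defined by the second-order recursion~\eqref{eq:ChiRecursion} and $\psi_i$ satisfies the second-order recursion $\xi_{i+1}(r) = (\xi_{i-1}(r) + (2i-1)\xi_i(r))/r^2$ proved in the preceding theorem. First I would check the base cases $i = 0$ and $i = 1$ directly: $\e^r r^0 \psi_0(r) = \e^r \cdot \e^{-r} = 1 = \chi_0(r)$, and $\e^r r^2 \psi_1(r) = \e^r r^2 \cdot \e^{-r}/r = r = \chi_1(r)$. These are immediate from the given initial values.

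For the inductive step, I would assume $\chi_{i-1}(r) = \e^r r^{2(i-1)} \psi_{i-1}(r)$ and $\chi_i(r) = \e^r r^{2i} \psi_i(r)$, and deduce the formula for $\chi_{i+1}$. Applying the recursion for $\psi$ gives
\[
\e^r r^{2(i+1)} \psi_{i+1}(r)
= \e^r r^{2i+2} \cdot \frac{\psi_{i-1}(r) + (2i-1)\psi_i(r)}{r^2}
= \e^r r^{2i} \psi_{i-1}(r) + (2i-1)\e^r r^{2i}\psi_i(r).
\]
Now $\e^r r^{2i}\psi_{i-1}(r) = r^2 \cdot \e^r r^{2(i-1)}\psi_{i-1}(r) = r^2 \chi_{i-1}(r)$ by the inductive hypothesis, and $\e^r r^{2i}\psi_i(r) = \chi_i(r)$. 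Hence the right-hand side equals $r^2 \chi_{i-1}(r) + (2i-1)\chi_i(r)$, which is exactly $\chi_{i+1}(r)$ by the defining recursion~\eqref{eq:ChiRecursion} (with index shift). This closes the induction.

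There is essentially no serious obstacle here; the only thing to be careful about is bookkeeping with the index shifts between the two recursions and making sure the recursion $\xi_{i+1} = (\xi_{i-1} + (2i-1)\xi_i)/r^2$ is applied at the correct index. One could alternatively give a one-line proof by defining $g_i(r) := \e^r r^{2i}\psi_i(r)$, verifying $g_0 = \chi_0$, $g_1 = \chi_1$, and showing directly that $g_i$ satisfies recursion~\eqref{eq:ChiRecursion}; since that recursion together with two initial conditions determines the sequence uniquely, $g_i = \chi_i$ for all $i$. I would present whichever of these is cleaner, but both amount to the same computation. The statement is valid for $r \ne 0$ as written (there is a $r^{2i}$ denominator hidden in $\psi_i$), and since $\chi_i$ is a polynomial the identity extends to $r = 0$ by continuity when $i \ge 1$, while for $i = 0$ it is trivially the constant $1$.
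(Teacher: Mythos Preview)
Your proof is correct and takes essentially the same approach as the paper: verify the base cases $i=0,1$ directly, then use the second-order recursion $\psi_{i+1}=(\psi_{i-1}+(2i-1)\psi_i)/r^2$ together with the defining recursion~\eqref{eq:ChiRecursion} for $\chi_i$ to close the induction. The paper phrases this as ``$\e^r r^{2i}\psi_i(r)$ satisfies the same recursion and initial conditions as $\chi_i$'', which is exactly the alternative formulation you mention at the end.
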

\begin{proof}
By the above theorem we have 
\[\psi_{i+2}(r)=(\psi_{i}(r)+(2i+1)\psi_{i+1}(r))/r^2.\]
Multiplying through by $\e^rr^{2i+4}$ we find that $\e^r r^{2i}\psi_i(r)$ satisfies the defining recursion relation~\eqref{eq:ChiRecursion} for the reverse Bessel polynomials.  You can check that for $i=0,1$ the functions agree, therefore they agree for all $i$.
\end{proof}

The above recursive relation can be written in the following fashion.
\begin{equation}
  \xi_{i-1}(r) = -(2i-1)\xi_i(r) + r^2\xi_{i+1}(r).
  \label{eq:AltRecursion}
\end{equation}
If this is thought of as writing the `decrease the index by one' operation as `identity times a scalar depending on the index plus increase index by one times $r^2$', then decreasing the index by $k$ is doing the sum of those two operations $k$ times.  In general, if we have two \emph{commuting} operations $A$ and $B$ then there is a Leibniz formula $(A+B)^k=\sum_{m=0}^k\binom{k}{m}A^{k-m} B^m$.  The following expression, which we will use in Theorem~\ref{thm:GeneralKeyIntegral} below, ought to be seen in that context, but I don't see how to make that rigourous.
\begin{lemma}
\label{lemma:Leibniz}
For $i\ge 1$ and $0\le k \le i$, both sequences $(\psi_i(r))_{i=0}^\infty$ and $(\psit_i(r))_{i=0}^\infty$ satisfy the following formula:
\[
  \xi_{i-k}(r) = \sum_{m=0}^k\binom{k}{m}
     \bigg(\prod_{\ell=i+1-(k-m)}^i -(2\ell-1)\bigg)r^{2m}\xi_{i+m}(r).
\]
\end{lemma}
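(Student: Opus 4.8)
The plan is to prove the formula by induction on $k$, using only the three-term recursion \eqref{eq:AltRecursion}, i.e. $\xi_{i-1}(r) = -(2i-1)\xi_i(r) + r^2\xi_{i+1}(r)$, as input. Since both sequences $(\psi_i)_{i=0}^\infty$ and $(\psit_i)_{i=0}^\infty$ satisfy \eqref{eq:AltRecursion}, the two cases are handled simultaneously and there is nothing extra to check for each. For the base case $k=0$ the right-hand side collapses to the single term $m=0$, with $\binom{0}{0}=1$, an empty product equal to $1$, and $r^0=1$, so it reads $\xi_i(r)=\xi_i(r)$; the case $k=1$ is then literally \eqref{eq:AltRecursion}, which is a useful sanity check but not logically needed.

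For the inductive step I would assume the formula for some $k\ge 0$ and all admissible top indices, and take $k+1\le i$. Writing $\xi_{i-(k+1)}(r)=\xi_{(i-1)-k}(r)$ and applying the inductive hypothesis with top index $i-1$ (legitimate precisely because $k\le i-1$) expresses $\xi_{i-(k+1)}(r)$ as a combination of $\xi_{i-1}(r),\dots,\xi_{i-1+k}(r)$. I would then use \eqref{eq:AltRecursion} at index $i+m$ to rewrite each $\xi_{i-1+m}(r)=-(2(i+m)-1)\xi_{i+m}(r)+r^2\xi_{i+m+1}(r)$, substitute, reindex the resulting $\xi_{i+m+1}$-terms by $m\mapsto m+1$, and collect the coefficient of $r^{2m}\xi_{i+m}(r)$ for $0\le m\le k+1$. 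After factoring out the product $\prod_{\ell=i-(k-m)}^{i-1}-(2\ell-1)$, which is common to all three expressions involved once one tracks the telescoping of the factors $-(2\ell-1)$, and using Pascal's rule $\binom{k+1}{m}=\binom{k}{m}+\binom{k}{m-1}$, the required identity of coefficients reduces to the elementary binomial identity $m\binom{k}{m}=(k-m+1)\binom{k}{m-1}$, which is immediate. The extreme terms $m=0$ and $m=k+1$ come out correctly under the usual conventions $\binom{k}{-1}=\binom{k}{k+1}=0$ and the empty-product convention.

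I do not expect a genuine obstacle: the argument is bookkeeping of the products of factors $-(2\ell-1)$, and the only nontrivial cancellation is the binomial identity above. The two points that merely require care are (i) keeping the index ranges of the displayed products consistent with the empty-product convention, so that the $m=k$ term of the hypothesis and the $m=k+1$ term of the conclusion behave correctly, and (ii) noticing that the hypothesis $k+1\le i$ of the lemma is exactly what licenses applying the inductive hypothesis at top index $i-1$. Since \eqref{eq:AltRecursion} is the only property used, the same induction simultaneously establishes the formula for $(\psi_i)$ and for $(\psit_i)$.
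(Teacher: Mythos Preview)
Your proposal is correct and, like the paper, proceeds by induction on $k$ using only the three-term recursion \eqref{eq:AltRecursion}; in particular the crucial cancellation in both arguments is the identity $m\binom{k}{m}=(k-m+1)\binom{k}{m-1}$.

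The decomposition in the inductive step is organised differently, however. You write $\xi_{i-(k+1)}=\xi_{(i-1)-k}$, apply the hypothesis \emph{once} with top index shifted to $i-1$, and then push each resulting $\xi_{i-1+m}$ up via \eqref{eq:AltRecursion}. The paper instead keeps the top index $i$ fixed and starts by applying \eqref{eq:AltRecursion} at the bottom, writing $\xi_{i-k}$ in terms of $\xi_{i-(k-1)}$ and $\xi_{i-(k-2)}$; it then invokes the inductive hypothesis \emph{twice} (for $k-1$ and $k-2$) and must apply the recursion once more inside the second sum before collecting terms. Your route is a little more economical and the bookkeeping of the products $\prod_{\ell}-(2\ell-1)$ is correspondingly lighter, but both arguments have the same content and neither yields anything the other does not.
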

\begin{proof}
Let's simplify the notation by writing $A^j=\prod_{\ell=i+1-j}^i -(2\ell-1)$.  The formula holds vacuously for $k=0$.  We use the recursion relation~\eqref{eq:AltRecursion} and then proceed inductively.
\begin{align*}
  \xi_{i-k}
  &=
  -(2(i-k)+1)\xi_{i-k+1}(r)+r^2\xi_{i-k+2}(r)
  \\
  &=
  -(2(i-k)+1)\sum_{m=0}^{k-1}\binom{k-1}{m}
     A^{k-1-m}r^{2m}\xi_{i+m}(r)
   \\
   &\quad
   + r^2\sum_{m=0}^{k-2}\binom{k-2}{m}
     A^{k-2-m}r^{2m}\xi_{i+m}(r)
  \\
  &=
  \sum_{m=0}^{k-1}\binom{k-1}{m}(-(2(i-k)+1))
     A^{k-1-m}r^{2m}\xi_{i+m}(r)
   \\
   &\quad
   + \sum_{m=0}^{k-2}\binom{k-2}{m}
     A^{k-2-m}r^{2m+2}\bigl(-(2i+2m+1)\xi_{i+m+1}(r)
     +r^2\xi_{p+m+2}\bigr)
   \\
   &=
  \sum_{m=0}^{k-1}\binom{k-1}{m}(-(2(i-k)+1))
     A^{k-1-m}r^{2m}\xi_{i+m}(r)
   \\
   &\quad
   + \sum_{m=0}^{k-2}\binom{k-2}{m}
     A^{k-2-m}r^{2m+2}\bigl(-(2i+2m+1)\bigr)\xi_{i+m+1}(r)
   \\
   &\quad
   + \sum_{m=0}^{k-2}\binom{k-2}{m}
     A^{k-2-m}r^{2m+4}\xi_{p+m+2},
  \\
  \intertext{reindexing the sums,}
   &=
  \sum_{m=0}^{k}
   \biggl[ -\binom{k-1}{m}(2i-2k+1) -\binom{k-2}{m-1}(2i+2m-1)
   \\
   &\quad\qquad
   -\binom{k-2}{m-2}(2i+2m-2k+1) \biggr]  A^{k-1-m}r^{2m}\xi_{i+m}(r),
  \\
 \intertext{using $\binom{k-1}{m}m=\binom{k-2}{m-1}(k-1)$,}
  &=
  \sum_{m=0}^{k}
   \biggl[ \binom{k-1}{m}+\binom{k-2}{m-1}
   +\binom{k-2}{m-2}\biggr]
   \\
   &\quad\qquad \bigl(-(2i-2(k-m)+1)\bigr) A^{k-m-1}r^{2m}\xi_{i+m}(r)
   \\
  &=
  \sum_{m=0}^{k}
   \binom{k}{m}A^{k-m}r^{2m}\xi_{i+m}(r),
\end{align*}
as required.
\end{proof}
We finish this subsection by showing that our functions satisfy a second order differential equation.
\begin{thm}\label{thm:DiffEqn}
For $i\ge 0$ and $r>0$, both $\psi_i$ and $\psit_i$ satisfy the following differential equation:
\[
\xi_i''(r)+\tfrac{2i}{r}\xi_i'(r)- \xi_i(r)=0.
\]
\end{thm}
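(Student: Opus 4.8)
The plan is to obtain this differential equation as a direct algebraic consequence of the two recursion relations already established, with no need to treat $\psi_i$ and $\psit_i$ separately: both sequences satisfy the first-order recursion \eqref{eq:DiffRecursion} and the three-term recursion \eqref{eq:AltRecursion}, so it suffices to argue uniformly in terms of a generic $\xi_i$. I would start from \eqref{eq:DiffRecursion} in the form $\xi_i'(r) = -r\,\xi_{i+1}(r)$.

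Differentiating once more gives $\xi_i''(r) = -\xi_{i+1}(r) - r\,\xi_{i+1}'(r) = -\xi_{i+1}(r) + r^2\xi_{i+2}(r)$, where the last equality applies \eqref{eq:DiffRecursion} to $\xi_{i+1}'$. Substituting these expressions for $\xi_i'$ and $\xi_i''$ into the left-hand side of the asserted identity yields $\xi_i'' + \tfrac{2i}{r}\xi_i' - \xi_i = \bigl(-\xi_{i+1} + r^2\xi_{i+2}\bigr) + \tfrac{2i}{r}\bigl(-r\,\xi_{i+1}\bigr) - \xi_i = r^2\xi_{i+2} - (2i+1)\xi_{i+1} - \xi_i$. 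Finally, \eqref{eq:AltRecursion} with $i$ replaced by $i+1$ reads $\xi_i(r) = r^2\xi_{i+2}(r) - (2i+1)\xi_{i+1}(r)$, so the displayed combination is identically zero, which is exactly the claim.

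I do not anticipate a genuine obstacle: this is a two-line manipulation of recursions that are already proved. The only point deserving a moment's care is the index range. The three-term recursion \eqref{eq:AltRecursion} was established for index at least $1$, and in the last step it is invoked at index $i+1$, which is $\ge 1$ precisely when $i \ge 0$; hence the argument is valid for every $i \ge 0$ without a separate base case. As a sanity check one can nonetheless verify $i = 0$ directly, where the equation reduces to $\xi_0'' = \xi_0$, satisfied by $\psi_0 = \e^{-r}$ and by $\psit_0 = \cosh$. (An alternative derivation is available via the modified Bessel differential equation together with the identifications $\psi_i(x) = \sqrt{2/\pi}\,K_{i-1/2}(x)/x^{i-1/2}$ and $\psit_i(x) = (-1)^i\sqrt{\pi/2}\,I_{i-1/2}(x)/x^{i-1/2}$, but the recursion-based proof is shorter and self-contained.)
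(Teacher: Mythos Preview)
Your proof is correct but takes a genuinely different route from the paper's. The paper proves the statement by induction on $i$, using only the defining first-order recursion \eqref{eq:DiffRecursion}: it checks the base case $\xi_0'' = \xi_0$ directly and then, assuming the equation for $\xi_i$, substitutes $\xi_{i+1} = -\tfrac{1}{r}\xi_i'$ and computes derivatives to obtain the equation for $\xi_{i+1}$. You instead avoid induction entirely by combining the first-order recursion with the three-term recursion \eqref{eq:AltRecursion}, which has already been established at this point in the paper. Your approach is shorter and mechanically cleaner; the paper's approach has the minor advantage of being logically independent of the three-term recursion (so it could in principle be placed earlier in the exposition). Your remark about the index range is exactly right: \eqref{eq:AltRecursion} is stated for index $\ge 1$, and you invoke it at index $i+1$, so $i \ge 0$ is precisely what is needed.
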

\begin{proof}
This is proved by induction.   It is clearly true for both $\psi_0$ and $\psit_0$.  To prove the inductive step, begin by substituting in the inductive definition~\eqref{eq:DiffRecursion}.
\begin{align*}
\xi_{i+1}''+\tfrac{2i+2}{r}\xi_{i+1}' &=
(-\tfrac{1}{r}\xi_{i}')''+\tfrac{2i+2}{r}(-\tfrac{1}{r}\xi_{i}')'\\
&= -(\tfrac{2}{r^3} \xi_{i}' -\tfrac{2}{r^2} \xi_i'' +\tfrac{1}{r} \xi_i''')  -\tfrac{2i+2}{r}(-\tfrac{1}{r^2}\xi_i' + \tfrac{1}{r} \xi_i'') \\
&= -\tfrac{1}{r}[\xi_i''' +\tfrac{2i}{r}\xi_i'' -\tfrac{2i}{r^2}\xi_i']\\
&= -\tfrac{1}{r} [\xi_i''+\tfrac{2i}{r}\xi_i' ]' =-\tfrac{1}{r}\xi_i' =\xi_{i+1}.
\end{align*}
The next to last equality comes from the inductive hypothesis.
\end{proof}

\subsection{The Laplacian and spherically symmetric functions on $\R^n$}
As throughout this paper $n=2p+1$ is a fixed odd integer.  Here we will think of functions of $r$ as being spherically symmetric functions on $\R^n$.  Our proof of the generalized key integral will rely on the behaviour on the set of functions $\{\psi_i\}\cup\{\psit_i\}$ with respect to a certain differential operator defined in terms of the Laplacian operator $\Delta$.   Indeed, we will show that this set of functions spans the solution set of a certain differential equation.
Theorem~\ref{thm:LadderPsi} and Corollary~\ref{cor:Annihilate} were obtained in~\cite{BarceloCarbery}, but the proofs given here are more direct.

The Laplacian operator $\Delta$ is a differential operator defined on functions on $\R^n$; for a spherically symmetric function $g(r)$ the Laplacian is given by
  \[
  \Delta g(r) = g''(r) + \tfrac{n-1}{r} g'(r).
  \]
A fundamental property (observed in~\cite{BarceloCarbery}) is that the operator $(I-\Delta)$ moves us along the sequence of functions.
%
%
\begin{thm}
\label{thm:LadderPsi}For $i\ge 0$ both sequences $(\psi_i(r))_{i=0}^\infty$ and $(\psit_i(r))_{i=0}^\infty$ satisfy the following formula:
\[
(I- \Delta) \xi_i(r) = 2(p-i)\xi_{i+1}(r).
\]
\end{thm}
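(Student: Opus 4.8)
The plan is to reduce everything to the second-order differential equation already established in Theorem~\ref{thm:DiffEqn} together with the first-order recursion~\eqref{eq:DiffRecursion}. Since $n=2p+1$, the radial Laplacian acts on a spherically symmetric function as $\Delta\xi_i(r)=\xi_i''(r)+\tfrac{2p}{r}\xi_i'(r)$. So I would begin by writing
\[
(I-\Delta)\xi_i(r)=\xi_i(r)-\xi_i''(r)-\tfrac{2p}{r}\xi_i'(r),
\]
valid for $r>0$, for $\xi_i$ equal to either $\psi_i$ or $\psit_i$.

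Next I would invoke Theorem~\ref{thm:DiffEqn}, which gives $\xi_i''(r)=\xi_i(r)-\tfrac{2i}{r}\xi_i'(r)$, and substitute this into the expression above. The two copies of $\xi_i(r)$ cancel, leaving
\[
(I-\Delta)\xi_i(r)=\tfrac{2i}{r}\xi_i'(r)-\tfrac{2p}{r}\xi_i'(r)=-\tfrac{2(p-i)}{r}\,\xi_i'(r).
\]
Finally I would apply the defining recursion~\eqref{eq:DiffRecursion}, namely $\xi_{i+1}(r)=-\tfrac1r\xi_i'(r)$, to rewrite the right-hand side as $2(p-i)\xi_{i+1}(r)$, which is exactly the claimed identity. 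The argument is uniform in $i$ and does not even require induction, since Theorem~\ref{thm:DiffEqn} has already been proved for all $i\ge 0$.

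There is essentially no serious obstacle here; the one point that needs a word of care is the behaviour at $r=0$, since the manipulations above divide by $r$ and Theorem~\ref{thm:DiffEqn} is stated for $r>0$. For $\psi_i$ this is not an issue as the relevant functions blow up at the origin anyway; for $\psit_i$, which is genuinely defined at $r=0$ by Proposition~\ref{prop:psiTilde0}, the identity at $r=0$ follows either by continuity of both sides as $r\to 0^+$ or, if one prefers, by comparing the Taylor expansions from Proposition~\ref{prop:psiTilde0} term by term. I would mention this briefly and otherwise present the three-line computation above as the proof.
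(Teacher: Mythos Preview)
Your argument is correct and is essentially identical to the paper's own proof: both expand $(I-\Delta)\xi_i$ using the radial Laplacian, substitute the differential equation of Theorem~\ref{thm:DiffEqn} to cancel $\xi_i$ and $\xi_i''$, and then apply the recursion~\eqref{eq:DiffRecursion}. The paper does not comment on the point $r=0$, but your brief remark about it is a reasonable addition.
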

\begin{proof}
The result follows from the differential equation in Theorem~\ref{thm:DiffEqn} and the defining recurrence relation~\eqref{eq:DiffRecursion}:
\begin{align*}(I- \Delta) \xi_i(r) &= \xi_i(r) - \xi_i''(r) - \tfrac{n-1}{r} \xi_i'(r)\\
&= \tfrac{2i}{r}\xi'_i(r) - \tfrac{n-1}{r} \xi_i'(r)\\
&= (n-1-2i)\xi_{i+1}(r).
\qedhere
\end{align*}
\end{proof}

\begin{cor}
\label{cor:Annihilate}
For $k\ge 1$ and $i\ge 0$ both sequences $(\psi_i(r))_{i=0}^\infty$ and $(\psit_i(r))_{i=0}^\infty$ satisfy the following formula
\begin{align*}
(I- \Delta)^k \xi_i &= 2^k(p-i)(p-i-1)\dots(p-i-k+1)\xi_{i+k}.
\end{align*}
So if, further, $i\le p$ and $i+k>p$ then
\[ (I- \Delta)^k \xi_i =0 .\]
\end{cor}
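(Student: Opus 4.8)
The plan is to obtain Corollary~\ref{cor:Annihilate} directly from Theorem~\ref{thm:LadderPsi} by iterating the latter, and then to extract the vanishing statement by inspecting the resulting product of scalars.

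First I would prove the displayed formula for $(I-\Delta)^k\xi_i$ by induction on $k$, working uniformly for both $\xi_i=\psi_i$ and $\xi_i=\psit_i$, since Theorem~\ref{thm:LadderPsi} itself holds uniformly for the two sequences. The base case $k=1$ is precisely Theorem~\ref{thm:LadderPsi}, namely $(I-\Delta)\xi_i=2(p-i)\xi_{i+1}$. For the inductive step one applies $(I-\Delta)$ to the $(k-1)$ case
\[(I-\Delta)^{k-1}\xi_i = 2^{k-1}(p-i)(p-i-1)\cdots(p-i-k+2)\,\xi_{i+k-1}\]
and invokes Theorem~\ref{thm:LadderPsi} once more, now with index $i+k-1$ in place of $i$. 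Since $p-(i+k-1)=p-i-k+1$, the new scalar factor extends the product $(p-i)(p-i-1)\cdots(p-i-k+2)$ by exactly the term $(p-i-k+1)$, and the powers of $2$ combine to $2^k$, yielding the claimed identity with $\xi_{i+k}$ on the right.

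For the vanishing claim, suppose $i\le p$ and $i+k>p$, and set $m:=p-i$. Then $0\le m$ (from $i\le p$) and $m\le k-1$ (from $i+k>p$), so $m\in\{0,1,\dots,k-1\}$ and hence the factor $p-i-m=0$ occurs among the terms $(p-i),(p-i-1),\dots,(p-i-k+1)$ of the product. Therefore $(I-\Delta)^k\xi_i=0$.

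I do not expect any genuine difficulty here: the argument is a routine induction together with one elementary inequality check on the index range. The only thing worth being careful about is tracking the telescoping product of scalars correctly through the induction, so that the final product is precisely $2^k(p-i)(p-i-1)\cdots(p-i-k+1)$.
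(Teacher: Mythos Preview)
Your proposal is correct and is exactly the intended argument: the paper states this result as a corollary of Theorem~\ref{thm:LadderPsi} with no separate proof, leaving the reader to iterate the ladder relation just as you do. Your induction on $k$ and the index check for the vanishing statement are both fine.
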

We can use this to show how subsets of our sets of functions span certain solution sets.
\begin{thm}
For $p\ge k \ge 1$, if $g$ is a spherically symmetric smooth function defined on a neighbourhood of the origin in $\R^n$ with $(I-\Delta)^kg(r)=0$ then there exists a set of constants $\{c_i\}_{i=p-k+1}^p$ such that
\[g(r)=\sum_{i=p-k+1}^p c_i \psit_i(r).\]
\end{thm}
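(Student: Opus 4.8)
The plan is to prove the reverse inclusion to Corollary~\ref{cor:Annihilate} by induction on $k$, exploiting the fact that $(I-\Delta)$ both lowers the annihilation order by one and, by Theorem~\ref{thm:LadderPsi}, acts invertibly on the span of the relevant $\psit$'s. First note the easy half: by Corollary~\ref{cor:Annihilate} every $\psit_i$ with $p-k+1\le i\le p$ satisfies $(I-\Delta)^k\psit_i=0$, and by Proposition~\ref{prop:psiTilde0} each such $\psit_i$ is smooth on a neighbourhood of the origin; so $\mathrm{span}\{\psit_{p-k+1},\dots,\psit_p\}$ is contained in the solution set, and the content of the theorem is that every origin-smooth spherically symmetric solution of $(I-\Delta)^kg=0$ lies in this span.

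\emph{Base case $k=1$.} Since $n-1=2p$, the equation $(I-\Delta)g=0$ is the ODE $g''(r)+\tfrac{2p}{r}g'(r)-g(r)=0$ for $r>0$, whose full solution space on a punctured neighbourhood of $0$ is two-dimensional. By Theorem~\ref{thm:DiffEqn} with $i=p$ the function $\psit_p$ is a nonzero solution, and it is smooth at $0$. I claim it is, up to a scalar, the only origin-smooth solution. Given two solutions $g_1,g_2$ that extend smoothly across $0$, their Wronskian $W=g_1g_2'-g_1'g_2$ satisfies $W'=-\tfrac{2p}{r}W$, so $W(r)=Cr^{-2p}$ for a constant $C$; but a smooth spherically symmetric function and its first derivative are bounded near the origin, so $W$ is bounded there, which forces $C=0$ because $2p\ge2$. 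Hence $W\equiv0$, so $g_1,g_2$ are linearly dependent, and the space of origin-smooth solutions is one-dimensional, spanned by $\psit_p$.

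\emph{Inductive step.} Let $k\ge2$ and assume the statement for $k-1$. Given a smooth spherically symmetric $g$ near the origin with $(I-\Delta)^kg=0$, set $h:=(I-\Delta)g$; then $h$ is again smooth and spherically symmetric near the origin and $(I-\Delta)^{k-1}h=0$, so by the inductive hypothesis $h=\sum_{i=p-k+2}^{p}c_i\psit_i$ for constants $c_i$. By Theorem~\ref{thm:LadderPsi}, $(I-\Delta)\psit_{i-1}=2(p-i+1)\psit_i$, and $2(p-i+1)\ge2>0$ throughout the range $p-k+2\le i\le p$, so
\[
g_0:=\sum_{i=p-k+2}^{p}\frac{c_i}{2(p-i+1)}\,\psit_{i-1}
\]
is smooth near the origin and satisfies $(I-\Delta)g_0=h$. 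Then $(I-\Delta)(g-g_0)=0$ with $g-g_0$ smooth near the origin, so the base case gives $g-g_0=c\,\psit_p$ for some constant $c$; hence $g=g_0+c\,\psit_p$, which is a linear combination of $\psit_{p-k+1},\dots,\psit_p$, completing the induction.

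The main obstacle is the base case: ruling out a spurious second solution of the second-order ODE that is regular at the origin. This is exactly where the hypothesis that $g$ is defined on a neighbourhood of the origin is used, the point being that $r=0$ is a regular singular point and one of the two local solutions is genuinely singular there. The Wronskian computation above resolves this cleanly, but one must invoke that smoothness of the radial profile forces both the function and its derivative to be bounded near $0$. A minor routine point to check along the way is that $(I-\Delta)g$ is again a smooth spherically symmetric function whenever $g$ is, so that the induction remains inside the class of functions to which the hypothesis applies.
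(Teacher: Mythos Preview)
Your proof is correct and takes a genuinely different route from the paper's. The paper argues by a direct dimension count: it observes that $(I-\Delta)^k g=0$ is a linear ODE of order $2k$ in $r$, exhibits the $2k$ solutions $\psi_{p-k+1},\dots,\psi_p,\psit_{p-k+1},\dots,\psit_p$ (all annihilated by Corollary~\ref{cor:Annihilate}), asserts their linear independence, and then rules out any $\psi_i$ contribution to a smooth-at-origin solution by noting that each $\psi_i$ is singular (or, for $i=0$, not differentiable as a radial function) at $r=0$. You instead induct on $k$: the base case $k=1$ is handled by a clean Wronskian argument showing the origin-regular kernel of $I-\Delta$ is one-dimensional, and the inductive step uses the ladder relation $(I-\Delta)\psit_{i-1}=2(p-i+1)\psit_i$ to peel off one power of $(I-\Delta)$ at a time. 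Your approach has the advantage of never touching the singular family $\psi_i$ at all and of being essentially constructive; the paper's approach is shorter once one is willing to invoke the full $2k$-dimensional solution space and the singularity orders of the $\psi_i$.
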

\begin{proof}
By Corollary~\ref{cor:Annihilate}, the set $\{\psi_{p-k+1},\dots,\psi_{p},\psit_{p-k+1},\dots,\psit_{p}\}$ gives us $2k$ linearly independent solutions to the order $2k$ linear differential equation $(I-\Delta)^k g(r)=0$, so this set spans the space of solutions.  The function $\psi_i$, for $i\ge 1$ has a singularity of order $2i-1$ and the function $\psi_0(r)$ is $\e^{-r}$ which is not differentiable at the origin as a spherically symmetric function; thus any solution which is smooth at the origin must be a linear combination of the set of functions $\{\psit_{p-k+1},\dots,\psit_{p}\}$.
\end{proof}

\subsection{Proof of the theorem}
We will begin with the following first approximation to Theorem~\ref{thm:GeneralKeyIntegral} and then find the functions $a_{p,j,i}(R)$.
\begin{prop}
\label{prop:IntIsLinearComb}
For $p\in \N_{>0}$ and  $0\le j \le p$,  there is a set of  functions $\{a_{p,j,i}(R)\}_{i=0}^{p-j}$ such that for $0<R$ and $0\le s < R$
\[
  \int_{\vect{x}\in S^{2p}_R} \psi_j(\left | \vect{x}-\vect{s}\right|)\,\mathrm{d}\vect{x}
  =
  2(-2\pi)^p \sum_{i=0}^{p-j} \binom{p-j}{i}a_{p,j,i}(R) \psit_{i+j}(s).
  \]
\end{prop}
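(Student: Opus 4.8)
The plan is to treat the left-hand side as a smooth, spherically symmetric function of the displacement vector $\vec{s}$, to show that this function is annihilated by $(I-\Delta)^{p-j+1}$ on a neighbourhood of the origin, and then to invoke the structure theorem for solutions of that equation proved just above this subsection.

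First I would set $F(\vec{s}) := \int_{\vec{x}\in S^{2p}_R}\psi_j(|\vec{x}-\vec{s}|)\,\dd\vec{x}$ for $\vec{s}$ in the open ball $\{|\vec{s}|<R\}$. On this ball $|\vec{x}-\vec{s}|\ge R-|\vec{s}|>0$ for every $\vec{x}\in S^{2p}_R$, so the integrand is smooth in $\vec{s}$, and since $S^{2p}_R$ is compact, $F$ is smooth and we may differentiate under the integral sign as often as we please. Substituting $\vec{x}\mapsto Q\vec{x}$ for $Q\in SO(2p+1)$ shows $F(Q\vec{s})=F(\vec{s})$, so $F(\vec{s})=g(|\vec{s}|)$ for a function $g$ such that $\vec{s}\mapsto g(|\vec{s}|)$ --- being $F$ itself --- is a smooth spherically symmetric function on the neighbourhood $\{|\vec{s}|<R\}$ of the origin.

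Next I would exploit translation invariance of the Laplacian: for fixed $\vec{x}$ one has $\Delta_{\vec{s}}[\psi_j(|\vec{x}-\vec{s}|)]=(\Delta\psi_j)(|\vec{x}-\vec{s}|)$, where $\psi_j$ is regarded as a radial function on $\R^{2p+1}$, and hence $(I-\Delta_{\vec{s}})^{p-j+1}[\psi_j(|\vec{x}-\vec{s}|)]=[(I-\Delta)^{p-j+1}\psi_j](|\vec{x}-\vec{s}|)$. By Corollary~\ref{cor:Annihilate} this vanishes, since $j\le p$ and $j+(p-j+1)=p+1>p$. Differentiating under the integral sign now gives $(I-\Delta)^{p-j+1}g=0$ on $\{|\vec{s}|<R\}$, so the structure theorem above with $k=p-j+1$ supplies constants $c_j(R),\dots,c_p(R)$ with $g(s)=\sum_{i=j}^{p}c_i(R)\,\psit_i(s)$. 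Reindexing $i\mapsto i+j$ and putting $a_{p,j,i}(R):=c_{i+j}(R)/(2(-2\pi)^p\binom{p-j}{i})$ gives exactly the asserted identity. In the case $j=0$ the exponent is $k=p+1$, which lies just beyond the range stated for that theorem; but its proof applies verbatim, as the $2(p+1)$ functions $\psi_0,\dots,\psi_p,\psit_0,\dots,\psit_p$ still span the solution space of the order $2(p+1)$ equation $(I-\Delta)^{p+1}g=0$, and the $\psi_i$ are excluded because $\psi_0=\e^{-r}$ is not smooth at the origin while $\psi_i$ has a pole there for $i\ge1$.

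The only delicate points are the two routine analytic justifications --- differentiation under the integral sign (immediate from smoothness of the integrand and compactness of $S^{2p}_R$) and the mild extension of the structure theorem to $k=p+1$ --- and I do not expect a real obstacle in either. The substantive work is not proving that the coefficients $a_{p,j,i}(R)$ exist but identifying them, which is done afterwards by descending induction on $j$: the base case $j=p$ is pinned down by evaluating at $s=0$, and the inductive step follows by applying $(I-\Delta_{\vec{s}})$ to the identity just obtained and comparing coefficients of the linearly independent functions $\psit_{i}(s)$.
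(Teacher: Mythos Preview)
Your proof is correct and follows essentially the same route as the paper: show the integral is a smooth spherically symmetric function of $\vec s$, differentiate under the integral sign using translation invariance of $\Delta$ and Corollary~\ref{cor:Annihilate} to see it is annihilated by $(I-\Delta)^{p-j+1}$, and then invoke the structure theorem for such solutions. You are in fact slightly more careful than the paper in one place: you flag that for $j=0$ one needs $k=p+1$, just outside the stated range $p\ge k\ge1$ of the structure theorem, and you correctly note that its proof goes through verbatim there since the $2(p+1)$ functions $\psi_0,\dots,\psi_p,\psit_0,\dots,\psit_p$ still span the solution space and the $\psi_i$ are excluded by their singular behaviour at the origin.
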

\begin{proof}
For the moment, fix $R>0$.  Observe that $\psi_j(r)$ is smooth for $r\ne 0$ so for fixed $\vect{x}\in \R^{2p+1}$ with $|\vect{x}|=R$ and $|\vect{s}| < R$ we have $\psi_j(|\vect{x}-\vect{s}|)$ is a smooth function of $\vect{s}$.   As $(I-\Delta_{\vect{s}})^{p-j+1}\psi_j(|\vect{s}|)=0$, and the Laplacian is translation invariant, $(I-\Delta_{\vect{s}})^{p-j+1}\psi_j(|\vect{x}-\vect{s}|)=0$.  Averaging over a sphere will not reduce smoothness so for all $|\vect{s}|<R $ we have $  \int_{\vect{x}\in S^{2p}_R} \psi_j(\left | \vect{x}-\vect{s}\right|)\,\mathrm{d}\vect{x}
$ is a smooth, spherically symmetric function of $\vect{s}$.  Then differentiating under the integral sign we find
  \begin{align*}
   (I-\Delta_{\vect{s}})^{p-j+1} \int_{\vect{x}\in S^{2p}_R} \psi_j(\left | \vect{x}-\vect{s}\right|)\,\mathrm{d}\vect{x}
  &= \int_{\vect{x}\in S^{2p}_R} (I-\Delta_{\vect{s}})^{p-j+1}\psi_j(\left | \vect{x}-\vect{s}\right|)\,\mathrm{d}\vect{x}
  =0.
\end{align*}
Now we can just apply the lemma above to find that there is a set of constants $\{c_i\}_{i=0}^{p-j}$ so that
\[\int_{\vect{x}\in S^{2p}_R} \psi_j(\left | \vect{x}-\vect{s}\right|)\,\mathrm{d}\vect{x}=
 \sum_{i=0}^{p-j} c_{i} \psit_{i+j}(s).\]
But these `constants' depend on our fixed $R$, so rescaling these functions of $R$ appropriately we find  a set of  functions $\{a_{p,j,i}(R)\}_{i=0}^{p-j}$ such that
\[
  \int_{\vect{x}\in S^{2p}_R} \psi_j(\left | \vect{x}-\vect{s}\right|)\,\mathrm{d}\vect{x}
  =
  2(-2\pi)^p \sum_{i=0}^{p-j} \binom{p-j}{i}a_{p,j,i}(R) \psit_{i+j}(s),
  \]
for $s<R$, as required.
\end{proof}

We now just have to prove that for given $p$ and $0\le j\le p$ we have $a_{p,j,i}(R)= R^{2p+2i}\psi_{p+i}(R)$.  I cannot see any obvious reason why these are independent of $j$.  We will inductively work down from $j=p$ to $j=0$.  First we will do the base case of $j=p$.  (As mentioned in the introduction, this is equivalent to a known result in the literature about modified spherical Bessel functions.)
\begin{prop}
\label{prop:jEqualspVersion}
For $R> s =|\vec{s}|$ we have
\[
  \int_{\vect{x}\in S^{2p}_R} \psi_p(\left | \vect{x}-\vect{s}\right|)\,\mathrm{d}\vect{x}
  =
  2(-2\pi)^p R^{2p}\psi_p(R) \psit_{p}(s).
  \]
\end{prop}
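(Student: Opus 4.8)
The plan is to specialise Proposition~\ref{prop:IntIsLinearComb} to $j=p$, where the sum on the right-hand side collapses to a single term, and then to pin down the lone unknown coefficient by evaluating at $\vec{s}=\vec{0}$.

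Indeed, when $j=p$ the only index in the range $0\le i\le p-j$ is $i=0$, so Proposition~\ref{prop:IntIsLinearComb} produces a function $a_{p,p,0}(R)$ with
\[
  \int_{\vec{x}\in S^{2p}_R} \psi_p(|\vec{x}-\vec{s}|)\,\dd\vec{x}
  = 2(-2\pi)^p\, a_{p,p,0}(R)\,\psit_p(s)
  \qquad\text{for all }0\le s<R .
\]
So it suffices to show $a_{p,p,0}(R)=R^{2p}\psi_p(R)$. I would do this by taking $\vec{s}=\vec{0}$, which is permitted since the displayed identity holds for every $s\in[0,R)$. On the left-hand side, every $\vec{x}\in S^{2p}_R$ satisfies $|\vec{x}-\vec{0}|=R$, so the integrand is the constant $\psi_p(R)$ and the integral equals $\psi_p(R)\,\vol(S^{2p}_R)=\psi_p(R)\,\sigma_{2p}R^{2p}$. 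On the right-hand side, Proposition~\ref{prop:psiTilde0} gives $\psit_p(0)=(-1)^p\big/\bigl(1\cdot 3\cdots(2p-1)\bigr)$. Equating the two expressions, the claim $a_{p,p,0}(R)=R^{2p}\psi_p(R)$ reduces to the purely numerical identity
\[
  \sigma_{2p}=2(-2\pi)^p\cdot\frac{(-1)^p}{1\cdot 3\cdots(2p-1)},
\]
which I would verify using $\sigma_{2p}=(2p+1)\,\omega_{2p+1}=\tfrac{2\,(p!)\,(4\pi)^p}{(2p)!}$ together with $1\cdot 3\cdots(2p-1)=\tfrac{(2p)!}{2^{p}\,p!}$ (both sides then equal $2(2\pi)^p\big/(2p-1)!!$, and in particular the constant is nonzero, so $a_{p,p,0}(R)$ is genuinely determined). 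Substituting this value of $a_{p,p,0}(R)$ back into the displayed identity finishes the proof.

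The only thing to take care over is the bookkeeping of the normalising constants: $\sigma_{2p}$, the power $(4\pi)^p$ coming from the volume of the odd-dimensional sphere, and the double factorial $1\cdot 3\cdots(2p-1)$ arising from $\psit_p(0)$. There is no analytic difficulty, since the specialisation $\vec{s}=\vec{0}$ makes the sphere integral trivial. As a sanity check, and as an alternative route, one may recognise this $j=p$ case as the classical modified spherical Bessel function identity quoted in the introduction (see~\cite[11.42~(16)]{Watson:BesselFunctions} or~\cite[(4.11.6)]{Andrews:SpecialFunctions}); but the argument above is self-contained and avoids translating between the two normalisations.
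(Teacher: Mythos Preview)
Your proposal is correct and follows essentially the same route as the paper: specialise Proposition~\ref{prop:IntIsLinearComb} to $j=p$ so that only $a_{p,p,0}(R)$ survives, evaluate at $\vec{s}=\vec{0}$ to make the sphere integral trivial, and identify the constant via $\sigma_{2p}=2(-2\pi)^p\psit_p(0)$. The only cosmetic difference is that you spell out the verification of this last identity using $\omega_{2p+1}$ and the double factorial, whereas the paper simply quotes the formula for $\sigma_{2p}$.
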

\begin{proof}
By Proposition~\ref{prop:IntIsLinearComb} we have
\begin{equation}
\label{eq:sEquals0}
  \int_{\vect{x}\in S^{2p}_R} \psi_p(\left | \vect{x}-\vect{s}\right|)\,\mathrm{d}\vect{x}
  =
  2(-2\pi)^p a_{p,p,0}(R) \psit_{p}(s),
  \end{equation}
for some function $a_{p,p,0}(R)$.   Substituting $\vect{s}=\vect{0}$ and writing $\sigma_{2p}$ for the volume of the unit $2p$-dimensional sphere, the left hand side of~\eqref{eq:sEquals0} becomes
\begin{align*}
  \int_{\vect{x}\in S^{2p}_R} \psi_p(\left | \vect{x}\right|)\,\mathrm{d}\vect{x}
  &=
  \int_{\vect{x}\in S^{2p}_R} \psi_p(R)\,\mathrm{d}\vect{x}
  =
  \sigma_{2p}R^{2p}\psi_p(R)
  \\
  &=
  \frac{2(2\pi)^p}{1\cdot 3\cdot 5\cdots (2p-1)}R^{2p}\psi_p(R)
  \\
  &=
  2(-2\pi)^p\psit_p(0)R^{2p}\psi_p(R),
\end{align*}
where the formula for $\psit_p(0)$ was given in Theorem~\ref{prop:psiTilde0}.  The result follows from comparing with the right hand side of equation~\eqref{eq:sEquals0}.
  \end{proof}

Now we can prove the general case.
\begin{proof}[Proof of Theorem~\ref{thm:GeneralKeyIntegral}]
We work inductively downwards.  Assume that for given $p$, the theorem holds for all $j$ satisfying $0\le k<j\le p$.  We will prove that it holds for $j=k$.  We know that
\begin{align*}
  \int_{\vect{x}\in S^{2p}_R} \psi_k(\left | \vect{x}-\vect{s}\right|)\,\mathrm{d}\vect{x}
  &=
  2(-2\pi)^p \sum_{i=0}^{p-k} \binom{p-k}{i}a_{p,k,i}(R) \psit_{i+k}(s).
  \\
\intertext{Thus}
  (I-\Delta_{\vect{s}})\int_{\vect{x}\in S^{2p}_R} \psi_k(\left | \vect{x}-\vect{s}\right|)\,\mathrm{d}\vect{x}
  &=
  (I-\Delta_{\vect{s}})2(-2\pi)^p \sum_{i=0}^{p-k} \textstyle\binom{p-k}{i}a_{p,k,i}(R) \psit_{i+k}(s).
  \\
\intertext{So}
  \int_{\vect{x}\in S^{2p}_R} (I-\Delta_{\vect{s}})\psi_k(\left | \vect{x}-\vect{s}\right|)\,\mathrm{d}\vect{x}
  &=
  2(-2\pi)^p \sum_{i=0}^{p-k} \textstyle\binom{p-k}{i}a_{p,k,i}(R) (I-\Delta_{\vect{s}})\psit_{i+k}(s).
  \\
\intertext{Then, by Theorem~\ref{thm:LadderPsi} and the translation invariance of $\Delta_{\vect{s}}$,}
  \int_{\vect{x}\in S^{2p}_R} (p-k)\psi_{k+1}(\left | \vect{x}-\vect{s}\right|)\,\mathrm{d}\vect{x}
  &=
  2(-2\pi)^p \sum_{i=0}^{p-k} \textstyle\binom{p-k}{i}a_{p,k,i}(R) (p-k-i)\psit_{i+k+1}(s).
\end{align*}
By the inductive hypothesis, the left hand side of the above equation has the form
\[  2( -2\pi)^p\sum_{i=0}^{p-(k+1)} \textstyle\binom{p-(k+1)}{i}(p-k)R^{2p+2i}\psi_{p+i}(R) \psit_{k+1+i}(s).\]
But as $ \binom{p-k-1}{i}(p-k) = \binom{p-k}{i}(p-k-i)$, the previous equation becomes
\[
  \sum_{i=0}^{p-k-1} {\textstyle\binom{p-k-1}{i}}(p-k)R^{2p+2i}\psi_{p+i}(R) \psit_{k+1+i}(s)
  =  \sum_{i=0}^{p-k-1} \textstyle\binom{p-k-1}{i}(p-k)a_{p,k,i}(R) \psit_{k+1+i}(s).
\]
From the linear independence of the set of functions $\{\psit_i(s)\}_{i=0}^p$ we deduce that $a_{p,k,i}(R) =R^{2p+2i}\psi_{p+i}(R)$ for $i=0,\dots,p-k-1$.  It just remains to find $a_{p,k,p-k}$.  To do this we will consider the case $s=0$.

We now know
\begin{equation}
 \frac{1}{2( -2\pi)^p} \int_{\vect{x}\in S^{2p}_R} \psi_k(\left | \vect{x}-\vect{s}\right|)\,\mathrm{d}\vect{x}
  =
  \sum_{i=0}^{p-k-1} \binom{p-k}{i}R^{2p+2i}\psi_{p+i}(R) \psit_{k+i}(s)
  + a_{p,k,p-k}(R)\psit_{p}(s) .
  \label{eq:sIsZero}
\end{equation}
Taking $\vect{s}=\vect{0}$, and writing $\sigma_{2p}$ for the volume of the unit $2p$-dimensional sphere, the left hand side becomes
\[ \frac{1}{2( -2\pi)^p} \int_{\vect{x}\in S^{2p}_R} \psi_k(\left | \vect{x}\right|)\,\mathrm{d}\vect{x}
= \frac{\sigma_{2p}R^{2p}\psi_k(R)}{2( -2\pi)^p}  = \psit_p(0)R^{2p}\psi_k(R)
\]
because as noted above, in Proposition~\ref{prop:jEqualspVersion}, $\sigma_{2p}=2(-2\pi)^p\psit_p(0)$.

Writing $\psi_k(R)=\psi_{p-(p-k)}(R)$ we can use Lemma~\ref{lemma:Leibniz} and find this is equal to
\begin{align*}
\psit_p(0)R^{2p}
  \sum_{i=0}^{p-k}\binom{p-k}{i}
     \bigg(\prod_{\ell=1+k+i}^p -(2\ell-1)\bigg)R^{2i}\psi_{p+i}(R)
     &=\psit_p(0)R^{2p}
  \sum_{i=0}^{p-k}{\binom{p-k}{i}}
     \frac{\psit_{k+i}(0)}{\psit_{p}(0)}R^{2i}\psi_{p+i}(R)
  \\
  &=
  \sum_{i=0}^{p-k}{\binom{p-k}{i}}
    \psit_{k+i}(0)R^{2p+2i}\psi_{p+i}(R).
\end{align*}
Comparing this with the right hand side of equation~\eqref{eq:sIsZero} shows that $a_{p,k,p-k}(R)$ is of the required form and the theorem is proved.
\end{proof}
We have now proved the key integral identity for solving the weight equation.

%
%
%
%
\section{Proofs of the corollaries of the key integral}
\label{section:ProofsOfCorollaries}
In this section we will prove Corollaries~\ref{cor:FirstCor} and~\ref{cor:SecondCor} which are needed for solving the weight equation.
\subsection{Integration over a ball}
\label{section:ProofOfFirstCor}
Here we will prove Corollary~\ref{cor:FirstCor} about integration of $\e^{-|\vect{x}-\vect{s}|}$ over the ball of given radius by using the result on the integration over the spheres of various radius.  

Note first the following two lemmas.
\begin{lemma} For $n\ge 1$ and $\vect{s}\in \R^n$ we have
\[\frac{1}{n!\,\omega_n}\int_{\vect{x}\in \R^n} \e^{-|\vect{x}-\vect{s}|}\mathrm{d}\vect{x} = 1.\]
\end{lemma}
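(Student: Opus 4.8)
The plan is to reduce to the case $\vec{s}=\vec{0}$ using translation invariance of Lebesgue measure, and then evaluate the resulting integral in polar coordinates. First I would substitute $\vec{y}=\vec{x}-\vec{s}$, so that $\int_{\vec{x}\in\R^n}\e^{-|\vec{x}-\vec{s}|}\,\dd\vec{x}=\int_{\vec{y}\in\R^n}\e^{-|\vec{y}|}\,\dd\vec{y}$; this already shows the left-hand side is independent of $\vec{s}$, so it suffices to treat $\vec{s}=\vec{0}$.

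Next I would pass to polar coordinates, writing $r=|\vec{y}|$ and integrating first over the sphere of radius $r$ (of volume $\sigma_{n-1}r^{n-1}$, where $\sigma_{n-1}$ is the volume of the unit $(n-1)$-sphere). Since the integrand is nonnegative, Tonelli's theorem justifies the interchange of the radial and spherical integrations, and one gets
\[
\int_{\vec{y}\in\R^n}\e^{-|\vec{y}|}\,\dd\vec{y}
=\sigma_{n-1}\int_0^\infty \e^{-r}r^{n-1}\,\dd r
=\sigma_{n-1}\,\Gamma(n)
=\sigma_{n-1}\,(n-1)!,
\]
where the radial integral is the standard Gamma integral (for instance, evaluated by $n-1$ integrations by parts).

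Finally I would invoke the relation $\sigma_{n-1}=n\,\omega_n$ between the volume of the unit $(n-1)$-sphere and the volume $\omega_n$ of the unit $n$-ball --- the same normalization already used in the computation of $\left|\ball\right|$ in terms of $\beta_0(R)$ above --- to conclude
\[
\frac{1}{n!\,\omega_n}\int_{\vec{x}\in\R^n}\e^{-|\vec{x}-\vec{s}|}\,\dd\vec{x}
=\frac{\sigma_{n-1}\,(n-1)!}{n!\,\omega_n}
=\frac{n\,\omega_n\,(n-1)!}{n!\,\omega_n}
=1.
\]
There is no real obstacle here; the only points requiring a word of care are the (trivial) justification of the Tonelli interchange and recalling the normalization $\sigma_{n-1}/\omega_n=n$.
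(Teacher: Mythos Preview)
Your proof is correct and follows essentially the same route as the paper: translate to remove $\vec{s}$, switch to polar coordinates, recognise the Gamma integral $\int_0^\infty \e^{-r}r^{n-1}\,\dd r=(n-1)!$, and finish with $\sigma_{n-1}=n\,\omega_n$. The only differences are cosmetic --- you explicitly mention Tonelli and spell out the final division, whereas the paper is slightly terser.
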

\begin{proof}
By translating the origin and noting that the volumes of the unit $(n-1)$-sphere and unit $n$-ball  are related by $\sigma_{n-1}  = n\omega_n$, we have
\begin{align*}
\int_{\vect{x}\in \R^n} \e^{-|\vect{x}-\vect{s}|}\mathrm{d}\vect{x}
  &=
  \int_{\vect{x}\in \R^n} \e^{-|\vect{x}|}\mathrm{d}\vect{x}
  =
  \int_{r=0}^{\infty} \e^{-r} r^{n-1}\sigma_{n-1}\mathrm{d} r
  \\
  &=
  \Gamma(n) \sigma_{n-1}
  =
  (n-1)!\, n\omega_n
  =n!\,\omega_n.
\end{align*}
\end{proof}

\begin{lemma}
For $i=0,1,2,\dots$ and $R>0$ we have
  \[\int_R^\infty \chi_i(r) e^{-r} \,\mathrm{d}r = \frac{e^{-R}\chi_{i+1}(R)}{R}.\]
\end{lemma}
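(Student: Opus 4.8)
The plan is to reduce the statement to a computation about the functions $\psi_i$ using Proposition~\ref{prop:chiAndpsi}, and then to recognise the integrand as an exact derivative by invoking the second order differential equation of Theorem~\ref{thm:DiffEqn}.

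First I would rewrite both sides in terms of the $\psi_i$. Since $\chi_i(r)=\e^r r^{2i}\psi_i(r)$, the integrand is $\chi_i(r)\e^{-r}=r^{2i}\psi_i(r)$; and since $\chi_{i+1}(R)=\e^R R^{2i+2}\psi_{i+1}(R)$, the claimed value of the integral is $\frac{\e^{-R}\chi_{i+1}(R)}{R}=R^{2i+1}\psi_{i+1}(R)$. So the statement to prove becomes $\int_R^\infty r^{2i}\psi_i(r)\,\dd r = R^{2i+1}\psi_{i+1}(R)$.

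Next I would exhibit an explicit antiderivative of $r^{2i}\psi_i(r)$, namely $r^{2i}\psi_i'(r)$: differentiating and using Theorem~\ref{thm:DiffEqn} in the form $\psi_i''(r)+\tfrac{2i}{r}\psi_i'(r)=\psi_i(r)$ gives $\tfrac{\dd}{\dd r}\bigl(r^{2i}\psi_i'(r)\bigr)=r^{2i}\bigl(\psi_i''(r)+\tfrac{2i}{r}\psi_i'(r)\bigr)=r^{2i}\psi_i(r)$. Using the recurrence $\psi_{i+1}=-\tfrac1r\psi_i'$, i.e.\ $\psi_i'(r)=-r\psi_{i+1}(r)$, this antiderivative equals $-r^{2i+1}\psi_{i+1}(r)=-\tfrac{\e^{-r}\chi_{i+1}(r)}{r}$. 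Evaluating between $R$ and $\infty$ then yields $\int_R^\infty \chi_i(r)\e^{-r}\,\dd r = \bigl[r^{2i}\psi_i'(r)\bigr]_R^\infty = 0 - \bigl(-\tfrac{\e^{-R}\chi_{i+1}(R)}{R}\bigr)$, which is the claim.

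The only genuine point requiring care is the boundary term at infinity: one needs $\lim_{r\to\infty} r^{2i}\psi_i'(r)=0$ together with convergence of the improper integral, and both hold because $\psi_i(r)$ — and hence $\psi_i'(r)$ — is $\e^{-r}$ times a rational function of $r$, so it decays exponentially. Everything else is a one-line computation, so this is the main (very minor) obstacle. As an alternative route that avoids the $\psi_i$ entirely, one can check directly that $\tfrac{\dd}{\dd r}\bigl(\tfrac{\e^{-r}\chi_{i+1}(r)}{r}\bigr)=-\e^{-r}\chi_i(r)$ by combining the derivative identity $\chi_{i+1}(r)=(r+2i)\chi_i(r)-r\chi_i'(r)$ (which follows from $\chi_i=\e^r r^{2i}\psi_i$ and the recurrence for $\psi_i$) with the defining recursion~\eqref{eq:ChiRecursion}; but routing through Theorem~\ref{thm:DiffEqn} is cleaner.
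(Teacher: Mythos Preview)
Your proof is correct and follows essentially the same strategy as the paper: both identify $-\tfrac{\e^{-r}\chi_{i+1}(r)}{r}=-r^{2i+1}\psi_{i+1}(r)$ as an antiderivative of $\e^{-r}\chi_i(r)$ and then apply the Fundamental Theorem of Calculus. The only difference is in how the antiderivative is verified: you write it as $r^{2i}\psi_i'(r)$ and differentiate using the second-order ODE of Theorem~\ref{thm:DiffEqn}, whereas the paper differentiates $-r^{2i+1}\psi_{i+1}(r)$ directly via the defining recurrence $\psi_{i+1}'=-r\psi_{i+2}$, converts back to $\chi$'s, and invokes the recursion~\eqref{eq:ChiRecursion}. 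Your route is arguably slightly cleaner since the ODE packages the needed cancellation into a single step; the paper's route is the ``alternative'' you sketch at the end. Your explicit mention of the vanishing boundary term at infinity is also a small improvement in rigour over the paper's version, which leaves that implicit.
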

\begin{proof}
Observe first that
\begin{align*}
  \frac{\dd}{\dd r}\left(-\frac{e^{-r}\chi_{i+1}(r)}{r}\right)
  &=
  \frac{\dd}{\dd r}\left(-\psi_{i+1}(r)r^{2i+1}\right)
  =
  r\psi_{i+2}(r)r^{2i+1}-\psi_{i+1}(r)(2i+1)r^{2i}
 \\
 &
  =
  \e^{-r}r^{-2}\left(\chi_{i+2}(r)-(2i+1)\chi_{i+1}(r)\right)
  =
  e^{-r}\chi_i(r).
\end{align*}
So, by the Fundamental Theorem of Calculus,
\[
  \int_{r=R}^\infty e^{-r} \chi_i(r)\,\dd r
  =
  \left[-\frac{e^{-r}\chi_{i+1}(r)}{r}\right]^\infty_{r=R}
  =
  \frac{e^{-R}\chi_{i+1}(R)}{R},
  \]
as required.
\end{proof}
We can now do the integral over the ball:
\begin{align*}
\frac{1}{n!\,\omega_n}\int_{\vect{x}\in \ball} \e^{-|\vect{x}-\vect{s}|}\mathrm{d}\vect{x}
  &=
  \frac{1}{n!\,\omega_n}\left(\int_{\vect{x}\in \R^n} \e^{-|\vect{x}-\vect{s}|}\mathrm{d}\vect{x}
  - \int_{|\vect{x}|>R} \e^{-|\vect{x}-\vect{s}|}\mathrm{d}\vect{x} \right)\\
  &=
  1-\int_{r=R}^\infty \frac{1}{n!\,\omega_n}\int_{\vect{x}\in S^{n-1}_r} \e^{-|\vect{x}-\vect{s}|}\mathrm{d}\vect{x}\,\mathrm{d}r
  \\
  &=
  1-\int_{r=R}^\infty \frac{(-1)^p\e^{-r}}{2^p p!} \sum_{i=0}^p 
     \binom{p}{i}\chi_{i+p}(r)\psit_{i}(s)\,\mathrm{d}r
  \\
  &=
  1-\frac{(-1)^p}{2^p p!} \sum_{i=0}^p 
     \binom{p}{i}\left(\int_{r=R}^\infty \e^{-r}\chi_{i+p}(r)\,\mathrm{d}r\right) \psit_{i}(s)
  \\
  &=
  1-\frac{(-1)^p \e^{-R}}{2^p p!} \sum_{i=0}^p 
     \binom{p}{i}\frac{\chi_{i+p+1}(R)}{R} \psit_{i}(s),
\end{align*}
which is as required.

\subsection{Integration of the normal derivatives over a sphere}
\label{section:ProofOfSecondCor}
Here we will prove Corollary~\ref{cor:SecondCor} about integration of the normal derivatives of $\e^{-|\vect{x}-\vect{s}|}$, for fixed $\vect{s}$, over a sphere of given radius.

We want to do differentiation under the integral sign, but the region we are integrating over --- the radius $R$ sphere --- depends on the thing, $R$, we are differentiating with respect to.  The trick is to rescale and write the integral as an integral over the unit radius sphere.  Then differentiation in the normal direction just becomes differentiation with respect to $R$.  We will write $\hat{\vect{ x}}$ for a vector on the \emph{unit} sphere $S^{2p}$.
\begin{align*}
\frac{1}{n!\,\omega_n}\int_{\vect{x}\in S^{2p}_R}\frac{\partial^j}{\partial\nu^j}
  \left(\e^{-|\vect{x}-\vect{s}|}\right)\mathrm{d}\vect{x}
 &=
\frac{1}{n!\,\omega_n}\int_{\hat{\vect{x}}\in S^{2p}}\frac{\partial^j}{\partial\nu^j} \left(\e^{-|R\hat{\vect{ x}}-\vect{s}|}\right) R^{2p}\mathrm{d}\hat{\vect{ x}}
  \\
  &=
\frac{1}{n!\,\omega_n}\int_{\hat{\vect{x}}\in S^{2p}}\frac{\mathrm{d}^j}{\mathrm{d}R^j} \left(\e^{-|R\hat{\vect{ x}}-\vect{s}|}\right) R^{2p}\mathrm{d}\hat{\vect{ x}}
  \\
  &=
  R^{2p}\frac{\mathrm{d}^j}{\mathrm{d}R^j}\left(\frac{1}{n!\,\omega_n}\int_{\hat{\vect{x}}\in S^{2p}} \e^{-|R\hat{\vect{ x}}-\vect{s}|} \mathrm{d}\hat{\vect{ x}}\right)
  \\
    &=
  R^{2p}\frac{\mathrm{d}^j}{\mathrm{d}R^j}\left(R^{-2p}\frac{1}{n!\,\omega_n}\int_{\hat{\vect{x}}\in S^{2p}} \e^{-|R\hat{\vect{ x}}-\vect{s}|}\,R^{2p} \mathrm{d}\hat{\vect{ x}}\right)
  \\
  &=
  R^{2p}\frac{\mathrm{d}^j}{\mathrm{d}R^j}\left(R^{-2p}\frac{1}{n!\,\omega_n}\int_{{\vect{x}}\in S^{2p}_R} \e^{-|{\vect{ x}}-\vect{s}|}\, \mathrm{d}{\vect{ x}}\right)
  \\
  &=
  R^{2p}\frac{\mathrm{d}^j}{\mathrm{d}R^j}\left(R^{-2p}\frac{(-1)^p\e^{-R}}{2^p p!}\sum_{i=0}^p \binom{p}{i}\chi_{i+p}(R) \psit_{i}(s)\right)
  \\
  &=
  \frac{(-1)^p\e^{-R}}{2^p p!}\sum_{i=0}^p \binom{p}{i} \e^{R} R^{2p}\frac{\mathrm{d}^j}{\mathrm{d}R^j}\left(R^{-2p}\e^{-R}\chi_{i+p}(R) \right)\psit_{i}(s)
  \\
  &=
  \frac{(-1)^p\e^{-R}}{2^p p!}\sum_{i=0}^p \binom{p}{i} \delta^j\chi_{i+p}(R) \psit_{i}(s),
\end{align*}
as required.

\section{Magnitude in terms of Hankel determinants}
\label{section:HankelDets}

[In this section, everything will be a function of $R$, so we will remove it from the notation, writing $\chi_i$ and $\beta_j$ instead of $\chi_i(R)$ and $\beta_j(R)$.]

The goal of this section is to prove Theorem~\ref{thm:hankel} which gives the following formula for the magnitude in terms of Hankel determinants of reverse Bessel polynomials.
\[|B_R^n|=\frac{\det [\chi_{i+j+2}]_{i,j=0}^p}{n!\,R\det[\chi_{i+j}]_{i,j=0}^p}.\]
The proof will be computational and not give much insight as to why such a compact formula in possible.

We start here with the linear system~\eqref{eq:ExtendedLinearSystem} which has the magnitude as one of the unknowns.  Then
Cramer's Rule immediately gives us a formula for the magnitude in terms of determinants.
\begin{equation}
  \left|\ball \right|
  =
  \frac
  {\begin{vmatrix}
    \chi_p& \deltap\chi_{p}& \dots&\deltap^p\chi_p
      &\chi_{p+1}/R\\
    \chi_{p+1}& \deltap\chi_{p+1}& \dots&\deltap^p\chi_{p+1}
      &\chi_{p+2}/R\\
    \vdots&\vdots&&\vdots&\vdots\\
    \chi_{2p}& \deltap\chi_{2p} &\dots&\deltap^p\chi_{2p}
      &\chi_{2p+1}/R\\
    -nR^{n-1}&0 &\dots&0&R^n
  \end{vmatrix}}
  {\begin{vmatrix}
  \chi_p& \deltap\chi_{p}& \dots&\deltap^p\chi_p &0\\
\chi_{p+1}& \deltap\chi_{p+1}& \dots&\deltap^p\chi_{p+1}&0\\
\vdots&\vdots&&\vdots&\vdots\\
\chi_{2p}& \deltap\chi_{2p} &\dots&\deltap^p\chi_{2p}&0\\
-nR^{n-1}&0 &\dots&0&n!
  \end{vmatrix}}
  =: \frac{N}{D}.
\label{eq:NumeratorDenominator}
\end{equation}
We define $N$ and $D$, respectively, to be the numerator and denominator of this expression.  We can, with a bit of work, express them in terms of Hankel determinants.  First, here is a lemma, the second part of which makes good on our promise --- from after Corollary~\ref{cor:SecondCor} --- that we will see that $\deltap^j \chi_{m}$ will be written as a suitable linear combination of reverse Bessel functions.
\begin{lemma} \label{lemma:delta}
Suppose that $m\in \{0,1,2,\dots\}$, $0\le j\le m$ and $k\in \Z$ then
\begin{enumerate}
\item \label{lemma:deltafirst}
$\deltap(R^k\chi_m)= -R^{k+1}\chi_{m-1}- R^{k-1}(2p-k-1)\chi_{m}$;
\item \label{lemma:deltasecond}
$\deltap^j \chi_{m}=(-1)^j \bigl(R^j\chi_{m-j }+ 
\sum_{t=1}^{j}f(p,j,t) R^{j-2t}\chi_{m-j+t}\bigr)$, where $f(p,j,t)$ is an integer which is independent of $m$;
\item \label{lemma:deltathird} $\deltap^j(R^{-1} \chi_{m})=(-1)^j \bigl(R^{j-1}\chi_{m-j }+ 
\sum_{t=1}^{j}g(p,j,t) R^{j-2t-1}\chi_{m-j+t}\bigr)$, where $g(p,j,t)$ is an integer which is independent of $m$.
\end{enumerate}
\end{lemma}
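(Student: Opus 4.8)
The plan is to prove the three parts in sequence, each building on the previous one, all by direct computation starting from the definition $\deltap f(R) = e^{-R}R^{2p}\frac{\dd}{\dd R}\bigl(e^R R^{-2p} f(R)\bigr)$, together with the recursion relation~\eqref{eq:ChiRecursion} for the reverse Bessel polynomials.

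First I would establish part~\eqref{lemma:deltafirst} by a bare-hands differentiation. Writing out $\deltap(R^k\chi_m) = e^{-R}R^{2p}\frac{\dd}{\dd R}\bigl(e^R R^{k-2p}\chi_m\bigr)$ and applying the product rule gives three terms: one from differentiating $e^R$ (which cancels against the leading $e^{-R}$ and reproduces $R^k\chi_m$), one from differentiating $R^{k-2p}$ (giving $(k-2p)R^{k-1}\chi_m$), and one from differentiating $\chi_m$. The key input is that $\chi_m' $ can be re-expressed via the reverse Bessel polynomials: since $\chi_m(R) = R^{2m}e^R\psi_m(R)$ (Proposition~\ref{prop:chiAndpsi}) and $\psi_{m+1} = -\tfrac1R\psi_m'$, one gets $\chi_m' = \chi_m + 2mR^{-1}\chi_m - R\chi_{m+1}$; alternatively one derives $\chi_m' $ directly from~\eqref{eq:ChiRecursion}. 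Substituting and collecting the $\chi_m$ terms (the $R^k\chi_m$ contributions cancel, as they must since $\deltap$ lowers the "degree budget") leaves exactly $-R^{k+1}\chi_{m-1} - R^{k-1}(2p-k-1)\chi_m$, which is what is claimed. I should double-check the index shift here — the statement has $\chi_{m-1}$, so the cleaner route is to apply the recursion $\chi_{m+1} = R^2\chi_{m-1} + (2m-1)\chi_m$ to trade the $\chi_{m+1}$ that appears naturally for $\chi_{m-1}$ and $\chi_m$, then recollect.

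Parts~\eqref{lemma:deltasecond} and~\eqref{lemma:deltathird} then follow by induction on $j$, using part~\eqref{lemma:deltafirst} as the engine. For the inductive step in~\eqref{lemma:deltasecond}, I would write $\deltap^{j+1}\chi_m = \deltap\bigl(\deltap^j\chi_m\bigr)$, substitute the inductive expression $\deltap^j\chi_m = (-1)^j\bigl(R^j\chi_{m-j} + \sum_{t=1}^j f(p,j,t)R^{j-2t}\chi_{m-j+t}\bigr)$, then apply $\deltap$ term-by-term via part~\eqref{lemma:deltafirst} (each $\deltap(R^k\chi_\ell)$ splits into two terms, one with index $\ell-1$ and one with index $\ell$). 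Collecting like $\chi$-indices and matching powers of $R$ yields a recursion $f(p,j+1,t) = (\text{something in } f(p,j,\cdot) \text{ and } p)$; crucially this recursion has no $m$-dependence, because part~\eqref{lemma:deltafirst}'s coefficient $-(2p-k-1)$ depends only on $p$ and the power $k$, never on the polynomial index — that is the whole point of the claim that $f$ is independent of $m$, and it is essentially automatic once~\eqref{lemma:deltafirst} is in hand. Part~\eqref{lemma:deltathird} is the same induction with the initial term $R^{-1}\chi_m$ in place of $\chi_m$, i.e.\ one extra factor of $R^{-1}$ threaded through; since part~\eqref{lemma:deltafirst} is stated for all $k\in\Z$, it applies verbatim, and one gets the analogous recursion for $g(p,j,t)$.

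The main obstacle is purely bookkeeping: getting the index shifts and the powers of $R$ to line up correctly when collecting terms, and confirming that the leading term stays monic with coefficient $(-1)^j$ through the induction (the $R^j\chi_{m-j}$ term comes only from repeatedly taking the "$-R^{k+1}\chi_{\ell-1}$" branch of part~\eqref{lemma:deltafirst}, contributing a factor $-1$ each time and raising the $R$-power by $1$ each time, so after $j$ steps we get $(-1)^j R^j \chi_{m-j}$ — consistent). There is no conceptual difficulty; the care needed is in the algebra of reindexing the double sums and verifying that no $m$ leaks into the coefficients. It is worth remarking that one does not actually need closed forms for $f(p,j,t)$ and $g(p,j,t)$ — only their existence, integrality, and $m$-independence — which is exactly what the term-by-term application of part~\eqref{lemma:deltafirst} delivers.
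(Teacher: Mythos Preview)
Your strategy is sound and will work. Part~\ref{lemma:deltasecond} is handled exactly as in the paper: induction on $j$, applying part~\ref{lemma:deltafirst} term-by-term, with the key observation that the resulting recursion for $f(p,j,t)$ never sees $m$ because the coefficient $-(2p-k-1)$ in part~\ref{lemma:deltafirst} depends only on $p$ and the power of $R$.

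For parts~\ref{lemma:deltafirst} and~\ref{lemma:deltathird} you take different routes from the paper. For~\ref{lemma:deltafirst}, the paper proceeds by induction on $m$: it computes $\deltap(R^k)$ directly, verifies the cases $m=1,2$ by hand, and then uses the recursion~\eqref{eq:ChiRecursion} to step from $m'-1,m'$ to $m'+1$. Your direct differentiation via $\chi_m = e^R R^{2m}\psi_m$ and $\psi_m' = -R\psi_{m+1}$ is shorter and avoids induction, at the cost of importing Proposition~\ref{prop:chiAndpsi}. Two small corrections: your derivative formula should read $\chi_m' = \chi_m + 2mR^{-1}\chi_m - R^{-1}\chi_{m+1}$ (not $-R\chi_{m+1}$); and the operator $\deltap$ as it is actually used in the proof of Corollary~\ref{cor:SecondCor} has $e^R$ outside and $e^{-R}$ inside the derivative --- with that convention the computation collapses immediately to $\deltap(R^k\chi_m) = (k+2m-2p)R^{k-1}\chi_m - R^{k-1}\chi_{m+1}$, and then~\eqref{eq:ChiRecursion} converts $\chi_{m+1}$ to $\chi_{m-1}$ and gives the stated formula with no stray $R^k\chi_m$ term.

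For~\ref{lemma:deltathird}, the paper takes a shortcut: rather than rerunning the induction on $j$, it invokes the Leibniz-type identity
\[
\deltap^j(R^{-1}\chi_m)=\sum_{a=0}^j \binom{j}{a}\tfrac{\dd^a}{\dd R^a}(R^{-1})\,\deltap^{j-a}(\chi_m),
\]
which follows from $\deltap(fg)=f'g+f\,\deltap(g)$, and then reads off $g(p,j,t)$ as an explicit sum in the $f(p,\cdot,\cdot)$ already found. Your plan of redoing the $j$-induction with seed $R^{-1}\chi_m$ is equally valid and perhaps more uniform; the paper's Leibniz trick just saves repeating the bookkeeping of part~\ref{lemma:deltasecond}.
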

\begin{proof}
For part~\ref{lemma:deltafirst} we will proceed by induction on $m$.  First note that
\begin{align*}
  \deltap(R^k)
  &=
  e^RR^{2p}\tfrac{\mathrm d}{\mathrm d R}\bigl( e^{-R}R^{-2p} R^k\bigr)\\
  &=
  e^RR^{2p}\bigl( -e^{-R}R^{-2p} R^k+(-2p+k)e^{-R}R^{-2p} R^{k-1}\bigr)\\
  &= -R^k - (2p-k)R^{k-1}.
\end{align*}
We have $\chi_0=1$, $\chi_1=R$ and $\chi_2=R^2+R$, so
  \begin{align*}
  \deltap(R^k\chi_1)&= \deltap(R^{k+1})=-R^{k+1}\cdot 1-(2p-k-1)R^{k-1}\cdot R\\
\intertext{and}
  \deltap(R^k\chi_2)
  &= \deltap(R^{k+2}+R^{k+1})=\deltap(R^{k+2})+\deltap(R^{k+1})\\
  &=
   -R^{k+2}-(2p-k-2)R^{k+1} -R^{k+1}-(2p-k-1)R^{k}\\
   &=
   -R^{k+1}\cdot R-(2p-k-1)R^{k-1}(R^2+R),
  \end{align*}
thus the result holds for $m=1,2$.

Now suppose the result is true for $m\le m'$, we use the recursion relation~\eqref{eq:ChiRecursion} for the reverse Bessel polynomials to show that it holds when $m=m'+1$.
\begin{align*}
  \deltap(R^k\chi_{m'+1})
  &=
  \deltap(R^k(R^2\chi_{m'-1}+(2m-1)\chi_{m'}) )\\
  &=
  \deltap(R^{k+2}\chi_{m'-1})+(2m-1)\deltap(R^k\chi_{m'}) \\
  &=
  -R^{k+3}\chi_{m'-2} - R^{k+1}(2p-k-3)\chi_{m'-1}\\
  &\quad
  -(2m-1)R^{k+1}\chi_{m'-1} - R^{k-1}(2m-1)(2p-k-1)\chi_{m'}\\
  &=
  -R^{k+1}(R^2 \chi_{m'-2}+(2m-3)\chi_{m'-1})\\
  &\quad
  -R^{k-1}(2p-k-1)(R^2\chi_{m'-1}+(2m-1)\chi_{m'})\\
  &=
  -R^{k+1}\chi_{m'}-R^{k-1}(2p-k-1)\chi_{m'+1}.
\end{align*}
Thus the result for part~\ref{lemma:deltafirst} follows by induction.

For part~\ref{lemma:deltasecond} we proceed by induction on $j$.  Here the second equality uses the inductive hypothesis and the fact that $\deltap$ is $\Z$-linear, while the third equality uses part~\ref{lemma:deltafirst}.
\begin{align*}
  \deltap^{j+1}(\chi_m) &=
  \deltap\bigl(\deltap^{j}(\chi_m)\bigr)\\
  &=
  (-1)^j\biggl[ \deltap(R^j\chi_{m-j })+ 
    \textstyle\sum_{t=1}^{j}f(p,j,t) \deltap( R^{j-2t}\chi_{m-j+t})\biggr]\\
  &=
  (-1)^{j+1}\biggl[ R^{j+1}\chi_{m-j-1}+ R^{j-1}(2p-j-1)\chi_{m-j}+ \\
  &\qquad\qquad\quad
    \textstyle\sum_{t=1}^{j}f(p,j,t)\Bigl\{ R^{j-2t+1}\chi_{m-j+t-1} +\\
   &\qquad\qquad\qquad(2p-(j-2t)-1)  R^{j-2t-1}\chi_{m-j+t}\Bigr\}\biggr]\\
  &=
  (-1)^{j+1}\biggl[ R^{j+1}\chi_{m-(j+1)}+ R^{(j+1)-2}(2p-(j+1))\chi_{m-(j+1)+1}+ \\
  &\qquad\qquad\quad
    \textstyle\sum_{t=1}^{j}f(p,j,t) R^{j+1-2t}\chi_{m-(j+1)+t} +\\
  &\qquad\qquad\quad
     \textstyle\sum_{t=1}^{j}f(p,j,t)\bigl(2(p+t)-j-1\bigr)  R^{(j+1)-2t-2}\chi_{m-(j+1)+t+1}\biggr]\\
  &=
  (-1)^{j+1}\biggl[ R^{j+1}\chi_{m-(j+1)}+ \\
  &\qquad\qquad\quad
  R^{(j+1)-2}\bigl((2p-(j+1))+f(p,j,1)\bigr)\chi_{m-(j+1)+1}+ \\
  &\qquad\qquad\quad
    \textstyle\sum_{t=2}^{j}f(p,j,t) R^{(j+1)-2t}\chi_{m-(j+1)+t} +\\
  &\qquad\qquad\quad
     \textstyle\sum_{s=2}^{j}f(p,j,s-1)\bigl(2(p+s)-j-3\bigr)  R^{(j+1)-2s}\chi_{m-(j+1)+s}+ \\
  &\qquad\qquad\quad
 f(p,j,j)(2p+(j+1)-2)  R^{-(j+1)}\chi_{m}\biggr]\\
  &=
  (-1)^{j+1}\biggl[ R^{j+1}\chi_{m-(j+1)}+ \\
  &\qquad\qquad\quad
  \bigl((2p-(j+1))+f(p,j,1)\bigr)R^{(j+1)-2}\chi_{m-(j+1)+1}+ \\
  &\qquad\qquad\quad
    \textstyle\sum_{t=2}^{j}\Bigl(f(p,j,t)+f(p,j,t-1)(2(p+t)-j-3)\Bigr) \times\\
  &\qquad\qquad\qquad\qquad
    R^{(j+1)-2t}\chi_{m-(j+1)+t} +\\
  &\qquad\qquad\quad
  f(p,j,j)(2p+(j+1)-2)  R^{-(j+1)}\chi_{m}\biggr]\
\end{align*}
The last line means that we can define $f(p,j,t)$ inductively: for $j\ge 0$ we have $f(p, j, 0)=1$, $f(p, j, j+1)=0$ and  for $t=1,\dots, j+1$ \[f(p,j+1,t)= f(p,j,t)+(2(p+t)-j-3)f(p,j,t-1).\]

Part 3 is proved by using part 2 together with the easy-to-check Leibniz-type formula
\[
  \deltap^j(R^{-1}\chi_m)=\sum_{a=0}^j \binom{j}{a}\tfrac{\mathrm{d}^a}{\mathrm{d}R^a}(R^{-1}) \deltap^{j-a}(\chi_m).
\]
You will find that $g(p,j,t)=\sum_{\ell=0}^t\frac{j!}{(j-\ell)!}f(p, j-\ell, t-\ell)$.
\end{proof}
Armed with this lemma we can now prove that the  denominator and numerator can each be written in terms of a Hankel determinant of reverse Bessel functions.
\begin{lemma}\label{lemma:denominator}
 The denominator of the magnitude of the $n$-ball as given in~\eqref{eq:NumeratorDenominator} can be expressed in the following form:
\[D= n!\, R^{(p+1)p/2}\det[\chi_{i+j}]_{i,j=0}^p.\]
\end{lemma}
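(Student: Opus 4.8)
The plan is to start from the explicit determinant
\[
D=
\begin{vmatrix}
  \chi_p& \deltap\chi_{p}& \dots&\deltap^p\chi_p &0\\
  \chi_{p+1}& \deltap\chi_{p+1}& \dots&\deltap^p\chi_{p+1}&0\\
  \vdots&\vdots&&\vdots&\vdots\\
  \chi_{2p}& \deltap\chi_{2p} &\dots&\deltap^p\chi_{2p}&0\\
  -nR^{n-1}&0 &\dots&0&n!
\end{vmatrix}
\]
and first expand along the last column: the only nonzero entry there is $n!$ in the bottom-right corner, so $D = n!\,\det M$, where $M=[\deltap^{\,j}\chi_{p+i}]_{i,j=0}^{p}$ is the $(p+1)\times(p+1)$ matrix whose $(i,j)$ entry is $\deltap^{\,j}\chi_{p+i}$. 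It then remains to prove $\det M = R^{(p+1)p/2}\det[\chi_{i+j}]_{i,j=0}^{p}$.

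The heart of the argument is to rewrite the columns of $M$ using Lemma~\ref{lemma:delta}\eqref{lemma:deltasecond}, which says that for each $j$,
\[
\deltap^{\,j}\chi_{m}=(-1)^j\Bigl(R^{j}\chi_{m-j}+\textstyle\sum_{t=1}^{j}f(p,j,t)\,R^{j-2t}\chi_{m-j+t}\Bigr),
\]
with the coefficients $f(p,j,t)$ independent of $m$. Reading this for $m=p+i$ as $i$ ranges over $0,\dots,p$, it expresses column $j$ of $M$ as $(-1)^j$ times $R^{j}$ times the column vector $(\chi_{i-j})_{i=0}^{p}$ plus a fixed linear combination (with scalar coefficients not depending on $i$) of the column vectors $(\chi_{i-j+t})_{i=0}^{p}$ for $t=1,\dots,j$, each scaled by $R^{j-2t}$. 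The strategy is therefore a sequence of column operations: process the columns from $j=p$ down to $j=1$, and from column $j$ subtract the appropriate scalar multiples of the lower-indexed columns so as to cancel the correction terms, leaving column $j$ equal to exactly $(-1)^j R^{j}(\chi_{i-j})_{i=0}^{p}$. Because the correction terms in $\deltap^{\,j}\chi_{m}$ only involve $\chi_{m-j+t}$ with $t\ge 1$, i.e.\ indices strictly between $\chi_{m-j}$ and $\chi_m$, each such term is (after dividing out the relevant power of $R$) a scalar multiple of a column that has already been put into clean form, so the elimination is triangular and does not alter the determinant. One should be slightly careful that the $R$-powers match up — column $t$ after cleaning is $(-1)^t R^{t}(\chi_{i-t})_{i=0}^p$, and the term to be removed from column $j$ is $(-1)^j f(p,j,t)R^{j-2t}(\chi_{i-t})_{i=0}^p$, so the multiplier is $(-1)^{j-t} f(p,j,t) R^{j-2t-t}$, a Laurent monomial in $R$; this is a legitimate column operation over the field $\mathbb{Q}(R)$ and leaves the determinant unchanged. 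After all these operations, $M$ has been transformed into the matrix whose $j$th column is $(-1)^j R^{j}(\chi_{i-j})_{i=0}^{p}$, whose determinant is $\bigl(\prod_{j=0}^{p}(-1)^j R^{j}\bigr)\det[\chi_{i-j}]_{i,j=0}^{p}=(-1)^{p(p+1)/2}R^{p(p+1)/2}\det[\chi_{i-j}]_{i,j=0}^{p}$.

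Finally, I reverse the order of the columns of $[\chi_{i-j}]_{i,j=0}^{p}$ (replace $j$ by $p-j$): reversing the $p+1$ columns multiplies the determinant by $(-1)^{\lfloor (p+1)/2\rfloor}$... more cleanly, one checks $\det[\chi_{i-j}]_{i,j=0}^p = (-1)^{p(p+1)/2}\det[\chi_{i-(p-j)}]_{i,j=0}^p$, and $\chi_{i-(p-j)} = \chi_{i+j-p}$; but I actually want $\chi_{i+j}$, so the column indices should be handled so that the entries become $\chi_{i+j}$ — the reindexing $\chi_{i-j}\leadsto$ reading columns in reverse makes the $(i,j)$ entry $\chi_{i-p+j}$, and one absorbs the shift by $p$ using that a Hankel matrix of $\chi$'s with a uniform index shift relates to $[\chi_{i+j}]$ via the same sign bookkeeping; I will keep careful track so that the net sign is exactly $(-1)^{p(p+1)/2}$, cancelling the sign from the previous paragraph, yielding $\det M = R^{p(p+1)/2}\det[\chi_{i+j}]_{i,j=0}^{p}$ and hence $D = n!\,R^{(p+1)p/2}\det[\chi_{i+j}]_{i,j=0}^{p}$ as claimed. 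The main obstacle I anticipate is purely bookkeeping: verifying that the triangular elimination really does terminate with the clean columns (i.e.\ that no correction term ever reaches back to an unprocessed column) and that all the signs and powers of $R$ from the $(-1)^j$, the $R^j$, and the column reversal combine exactly as stated; the sign tracking is the part most likely to need care, but conceptually nothing subtle is happening.
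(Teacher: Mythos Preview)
Your approach is exactly the paper's: expand along the last column to get $D=n!\det M$, use Lemma~\ref{lemma:delta}\eqref{lemma:deltasecond} together with triangular column operations to reduce $M$ to the matrix with $j$th column $(-R)^j(\chi_{p+i-j})_i$, then reverse the columns and extract the powers of $R$. However, the bookkeeping --- which you rightly flag as the main obstacle --- has gone wrong in three places, and as written the argument does not quite go through.

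First, your sweep direction is backwards. You say to process $j=p$ down to $j=1$ and subtract ``columns that have already been put into clean form'', but in that order columns $0,\dots,j-1$ have \emph{not} yet been cleaned. The triangular structure runs the other way: the correction terms in $\deltap^{\,j}\chi_m$ involve $\chi_{m-j+t}=\chi_{m-(j-t)}$ with $1\le t\le j$, so they match clean columns $j-t<j$, and you must work left to right (this also corrects your second slip --- the $t$th correction term matches clean column $j-t$, not column $t$). Third, you have silently dropped a $p$: the clean $j$th column is $(-1)^jR^j(\chi_{p+i-j})_i$, not $(\chi_{i-j})_i$. With the correct index, reversing the columns ($j\mapsto p-j$) gives entries $\chi_{p+i-(p-j)}=\chi_{i+j}$ directly, so there is no leftover ``shift by $p$'' to absorb; the reversal sign $(-1)^{\binom{p+1}{2}}=(-1)^{p(p+1)/2}$ then cancels $\prod_{j}(-1)^j$ exactly, giving $\det M=R^{p(p+1)/2}\det[\chi_{i+j}]_{i,j=0}^{p}$ as claimed.
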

\begin{proof}
We see immediately, by expanding along the final column, that
  \[D=n!\,  {\begin{vmatrix}
  \chi_p& \deltap\chi_{p}& \dots&\deltap^p\chi_p \\
\chi_{p+1}& \deltap\chi_{p+1}& \dots&\deltap^p\chi_{p+1}\\
\vdots&\vdots&&\vdots\\
\chi_{2p}& \deltap\chi_{2p} &\dots&\deltap^p\chi_{2p}
  \end{vmatrix}}.\]
We can use part~\ref{lemma:deltafirst} of the above lemma to rewrite the second, $j=1$, column and then add $(2p-1)R^{-1}$ times the first, $j=0$, column to obtain
  \begin{align*}
  D
  &=
  n!\,
  {\begin{vmatrix}
  \chi_p& -R\chi_{p-1}- R^{-1}(2p-1)\chi_p& \deltap^2\chi_{p}&\dots&\deltap^p\chi_p \\
\chi_{p+1}& -R\chi_{p}- R^{-1}(2p-1)\chi_{p+1}&\deltap^2\chi_{p+1}& \dots&\deltap^p\chi_{p+1}\\
\vdots&\vdots&\vdots&&\vdots\\
\chi_{2p}& -R\chi_{2p-1}- R^{-1}(2p-1)\chi_{2p}& \deltap^2\chi_{2p} &\dots&\deltap^p\chi_{2p}
  \end{vmatrix}}\\[0.5em]
  &=
  n!\,
  {\begin{vmatrix}
  \chi_p& -R\chi_{p-1}& \deltap^2\chi_{p}&\dots&\deltap^p\chi_p \\
\chi_{p+1}& -R\chi_{p}&\deltap^2\chi_{p+1}& \dots&\deltap^p\chi_{p+1}\\
\vdots&\vdots&\vdots&&\vdots\\
\chi_{2p}& -R\chi_{2p-1}& \deltap^2\chi_{2p} &\dots&\deltap^p\chi_{2p}
  \end{vmatrix}}.
\end{align*}
Now we work to the right, inductively, column-by-column, using Lemma~\ref{lemma:delta} part~\ref{lemma:deltasecond} and adding a suitable linear combination of the columns to the left and find the following expression for $D$.
  \begin{align*}
  D
  &=
  n!\,
  {\begin{vmatrix}
  \chi_p& -R\chi_{p-1}& R^2\chi_{p-2}&\dots&(-R)^p\chi_0 \\
\chi_{p+1}& -R\chi_{p}&R^2\chi_{p-1}& \dots&(-R)^p\chi_{1}\\
\vdots&\vdots&\vdots&&\vdots\\
\chi_{2p}& -R\chi_{2p-1}& R^2\chi_{2p-2} &\dots&(-R)^p\chi_{p}
  \end{vmatrix}}.
\end{align*}
If we now reverse the order of the columns, then we pick up a minus sign for each pair of columns we switch and these precisely cancel out the minus signs in the matrix.  Then taking out the factor of $R^j$ in each column we find the required form of the denominator:
\[D= n!\, R^{(p+1)p/2}\,
  {\begin{vmatrix}
  \chi_0&\dots&\chi_p \\
\chi_{1}&\dots&\chi_{p+1}\\
\vdots&&\vdots\\
\chi_{p}&\dots&\chi_{2p}
  \end{vmatrix}}.
\]
\end{proof}
\begin{lemma}\label{lemma:numerator}
The numerator of the magnitude of the $n$-ball as given in~\eqref{eq:NumeratorDenominator} can be expressed in the following form:
\[N= R^{(p+1)p/2-1}\det[\chi_{i+j+2}]_{i,j=0}^p.\]
\end{lemma}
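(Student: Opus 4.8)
The plan is to follow the template of the proof of Lemma~\ref{lemma:denominator}, starting from the Cramer's rule formula for $N$ in~\eqref{eq:NumeratorDenominator}; the new complication is that the last column $(\chi_{p+i+1}/R)_{i=0}^{p}$ together with the bottom row $(-nR^{n-1},0,\dots,0,R^{n})$ spoil the clean triangular reduction that was available for $D$. First I would clear the bottom-left corner of the $(p+2)\times(p+2)$ matrix by adding $\tfrac{n}{R}$ times the last column to the first column: since $R^{n}$ sits in the bottom-right, the entry $-nR^{n-1}$ becomes $0$ and the whole bottom row becomes $(0,\dots,0,R^{n})$, while the first column becomes $\bigl(\chi_{p+i}+\tfrac{n}{R^{2}}\chi_{p+i+1}\bigr)_{i=0}^{p}$. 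Expanding along the bottom row then yields $N=R^{n}\det M$, where $M$ is the $(p+1)\times(p+1)$ matrix whose $j$-th column is $(\deltap^{j}\chi_{p+i})_{i=0}^{p}$ for $1\le j\le p$ and whose zeroth column is $\bigl(\chi_{p+i}+\tfrac{n}{R^{2}}\chi_{p+i+1}\bigr)_{i=0}^{p}$. (Using $n=2p+1$ and the recursion~\eqref{eq:ChiRecursion} the zeroth column is also $\bigl(R^{-2}(\chi_{p+i+2}-2i\,\chi_{p+i+1})\bigr)_{i=0}^{p}$.)

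Next I would run the column reduction of Lemma~\ref{lemma:denominator} on the columns $1,\dots,p$ of $M$, using Lemma~\ref{lemma:delta}(\ref{lemma:deltafirst})--(\ref{lemma:deltasecond}) to rewrite $\deltap^{j}\chi_{p+i}$ and subtracting suitable combinations of the preceding columns. The point is that the vector $v:=(\chi_{p+i+1})_{i=0}^{p}$ is ``invisible'' to this reduction: the index $p+i+1$ never occurs in any $\deltap^{j}\chi_{p+i}$ with $j\le p$, so the part of each column supported on $\chi$'s of index at most $p+i$ evolves exactly as in Lemma~\ref{lemma:denominator} --- column $j$ acquiring main term $(-R)^{j}\chi_{p+i-j}$ --- while the $v$-part merely accumulates. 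Since the zeroth column is $(\chi_{p+i})_{i=0}^{p}+\tfrac{n}{R^{2}}v$, after the reduction \emph{every} column of $M$ has the shape $(\text{main})+t_j\,v$ with main terms $(\chi_{p+i})_{i=0}^{p}$ (for $j=0$) and $((-R)^{j}\chi_{p+i-j})_{i=0}^{p}$ (for $j\ge1$) and with explicit scalar functions $t_j$ of $R$ (built from $n$ and the integers $f(p,j,t)$ of Lemma~\ref{lemma:delta}). Expanding $\det M$ multilinearly in the columns, every term with two or more columns equal to a multiple of $v$ vanishes, so
\[
\det M \;=\; R^{p(p+1)/2}\det[\chi_{i+j}]_{i,j=0}^{p}\;+\;\sum_{j=0}^{p}t_j\,D_j,
\]
where the first term --- the ``all main terms'' determinant --- is evaluated as in Lemma~\ref{lemma:denominator} by reversing columns and extracting the $R^{j}$ factors, and $D_j$ is the determinant of $M$ with column $j$ replaced by $v$ and the main terms kept elsewhere.

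The remaining work, which I expect to be the longest part, is to show that this sum collapses so that $N=R^{n}\det M=R^{p(p+1)/2-1}\det[\chi_{i+j+2}]_{i,j=0}^{p}$. After extracting the powers of $-R$ from their columns, both the first term and each $D_j$ are signed minors of the single $(p+1)\times(p+2)$ Hankel strip $S:=[\chi_{i+k}]_{i=0,\dots,p;\,k=0,\dots,p+1}$ obtained by deleting one column (column $p+1$ for the first term, column $p-j$ for $D_j$), so as $j$ runs over $0,\dots,p$ every column of $S$ is deleted exactly once. By the cofactor expansion formula the weighted sum is therefore, up to sign, the determinant of $S$ bordered below by one explicit extra row $w$, i.e.\ a $(p+2)\times(p+2)$ Hankel-type determinant; repeatedly applying the recursion~\eqref{eq:ChiRecursion} (equivalently, further column operations) reduces this to the shifted Hankel determinant $\det[\chi_{i+j+2}]_{i,j=0}^{p}$ times the appropriate power of $R$. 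The hard part is keeping straight the entries of $w$ and the accumulated powers of $R$. As a cleaner statement to aim at, and a convenient low-degree check (e.g.\ for $p=1,2$), one can instead prove $R\,N=\det[\deltap^{j}\chi_{m+2}]_{m=p,\dots,2p;\,j=0,\dots,p}$ by the same manipulations: running the reduction of Lemma~\ref{lemma:denominator} on the shifted sequence $(\chi_{m+2})$ gives $\det[\deltap^{j}\chi_{m+2}]_{m=p,\dots,2p;\,j=0,\dots,p}=R^{p(p+1)/2}\det[\chi_{i+j+2}]_{i,j=0}^{p}$, whence Lemma~\ref{lemma:numerator} follows on dividing by $R$.
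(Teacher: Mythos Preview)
Your opening moves match the paper: add $n/R$ times the last column to the first, expand along the bottom row to get $N=R^{n}\det M$, and observe that the zeroth column of $M$ is $(\chi_{p+i})_i+\tfrac{n}{R^{2}}(\chi_{p+i+1})_i$. After that the two approaches diverge, and the divergence is where your proof runs aground.

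The paper does \emph{not} try to run the Lemma~\ref{lemma:denominator} column reduction on $M$. Instead it rewrites each column $j\ge 1$ in the form $N_{i,j}=-\deltap^{\,j-1}(R^{-1}\chi_{p+i+1})-2i\,\deltap^{\,j-1}(R^{-1}\chi_{p+i})$ (using the recursion once on $\deltap\chi_{p+i}$) and likewise writes the zeroth column as $R^{-2}\chi_{p+i+2}\pm 2i\,R^{-2}\chi_{p+i+1}$. The crucial device is then a cascade of \emph{row} operations --- subtract $2$ times row $0$ from row $1$, then $4$ times the new row $1$ from row $2$, etc.\ --- which kills the $2i$-terms uniformly across all columns. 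What remains is the matrix with columns $R^{-2}(\chi_{p+i+2})_i$ and $-\deltap^{\,j-1}(R^{-1}\chi_{p+i+1})_i$ for $j\ge 1$, and \emph{this} is reduced to Hankel form by the variant column reduction of Lemma~\ref{lemma:delta}(\ref{lemma:deltathird}) (note: part~(\ref{lemma:deltathird}), not~(\ref{lemma:deltasecond})).

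Your route instead propagates the extra vector $v=(\chi_{p+i+1})_i$ through the column operations and expands multilinearly. Everything you write up to that point is correct, but the ``remaining work'' you flag is not a mop-up: it is the entire content of the lemma. You would need to identify the scalars $t_j$ explicitly (they involve the integers $f(p,j,t)$ from Lemma~\ref{lemma:delta}), check that the signed minors $D_j$ really are, after the sign-and-power bookkeeping, the maximal minors of the strip $S$, produce the border row $w$, and then argue that the resulting $(p+2)\times(p+2)$ bordered determinant reduces to the shifted Hankel. None of this is done, and the last step in particular is not obvious. Your ``cleaner statement'' $R\,N=\det[\deltap^{\,j}\chi_{m+2}]$ is equivalent to the lemma itself (given the shifted version of Lemma~\ref{lemma:denominator}'s reduction, which is indeed immediate), so it cannot serve as a shortcut: you still have to prove it, and ``by the same manipulations'' points back to the gap. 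The row-operation trick in the paper is precisely what lets one avoid this whole detour.
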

\begin{proof}
By adding $n/R$ times the final column to the first column and expanding along the bottom row we have
\[
  N=
  R^n
  \begin{vmatrix}
    \chi_p+n\chi_{p+1}/R^2& \deltap\chi_{p}& \dots&\deltap^p\chi_p
      \\
    \chi_{p+1}+n\chi_{p+2}/R^2& \deltap\chi_{p+1}& \dots&\deltap^p\chi_{p+1}
      \\
    \vdots&\vdots&&\vdots\\
    \chi_{2p}+n\chi_{2p+1}/R^2& \deltap\chi_{2p} &\dots&\deltap^p\chi_{2p}
  \end{vmatrix}.
\]
For the first, $j=0$, column we have $N_{i,0}=\chi_{p+i}+n\chi_{p+i+1}/R^2$.  By the recursion relation for the reverse Bessel polynomials we have $R^2\chi_{p+i}+(2p+2i+1)\chi_{p+i+1}=\chi_{p+i+2}$, but $n=2p+1$ and so $N_{i,0}=\chi_{p+i+2}/R^2+2i\chi_{p+i+1}/R^2$.

For the second, $j=1$, column we have
\begin{align*}
  N_{i,1}
  &=
  \deltap\chi_{p+i}
  =
  -R\chi_{p+i-1} - R^{-1}(2p-1)\chi_{p+i}\\
  &=
  -R^{-1}(\chi_{p+i+1}-(2p+2i-1)\chi_{p+i})- R^{-1}(2p-1)\chi_{p+i}\\
  &=
  -R^{-1}\chi_{p+i+1}-2iR^{-1}\chi_{p+i}.
\end{align*}

For the rest of the entries we have, by the $\Z$-linearity of $\deltap$,
\begin{align*}
  N_{i,j}
  &=
  \deltap^j\chi_{p+i}
  =
  \deltap^{j-1}\deltap\chi_{p+i}
  =
  -\deltap^{j-1}(R^{-1}\chi_{p+i+1})
  -2i\deltap^{j-1}(R^{-1}\chi_{p+i}).
\end{align*}
This gives
\[
  N=
  R^n
  \left|
  \begin{smallmatrix}
    R^{-2}\chi_{p+2}& -R^{-1}\chi_{p+1}& \dots&-\deltap^{p-1}(R^{-1}\chi_{p+1})
      \\
    R^{-2}\chi_{p+3}+2R^{-2}\chi_{p+2}& -R^{-1}\chi_{p+2}-2R^{-1}\chi_{p+1}& \dots&-\deltap^{p-1}(R^{-1}\chi_{p+2})-2\deltap^{p-1}(R^{-1}\chi_{p+1})
      \\
    R^{-2}\chi_{p+4}+4R^{-2}\chi_{p+3}& -R^{-1}\chi_{p+3}-4R^{-1}\chi_{p+2}& \dots&-\deltap^{p-1}(R^{-1}\chi_{p+3})-4\deltap^{p-1}(R^{-1}\chi_{p+2})
      \\
    \vdots&\vdots&&\vdots\\
    R^{-2}\chi_{2p+2}+2pR^{-2}\chi_{2p+1}& -R^{-1}\chi_{2p+1}-2pR^{-1}\chi_{2p}& \dots&-\deltap^{p-1}(R^{-1}\chi_{2p+1})-2p\deltap^{p-1}(R^{-1}\chi_{2p})
  \end{smallmatrix}
  \right|.
\]
Now we can work inductively from the top row down, subtracting two lots of the top row from the second row, then subtracting four lots of the resulting row from the third row, and so on downwards.  This gives
\[
  N=
  R^n
  \left|
  \begin{matrix}
    R^{-2}\chi_{p+2}& -R^{-1}\chi_{p+1}
    & -\deltap(R^{-1}\chi_{p+1})
    &\dots&-\deltap^{p-1}(R^{-1}\chi_{p+1})
      \\
    R^{-2}\chi_{p+3}& -R^{-1}\chi_{p+2}
    & -\deltap(R^{-1}\chi_{p+2})
    &
     \dots&-\deltap^{p-1}(R^{-1}\chi_{p+2})
      \\
    R^{-2}\chi_{p+4}& -R^{-1}\chi_{p+3}
    & -\deltap(R^{-1}\chi_{p+3})
    & \dots&-\deltap^{p-1}(R^{-1}\chi_{p+3})
      \\
    \vdots&\vdots&\vdots&&\vdots\\
    R^{-2}\chi_{2p+2}& -R^{-1}\chi_{2p+1}
    & -\deltap(R^{-1}\chi_{2p+1})
    & \dots&-\deltap^{p-1}(R^{-1}\chi_{2p+1})
  \end{matrix}
  \right|.
\]
Now using Lemma~\ref{lemma:delta} part~\ref{lemma:deltathird}, and the same argument as in Lemma~\ref{lemma:denominator} above, we get
\[
  N=
  R^n
  \left|
  \begin{matrix}
    R^{-2}\chi_{p+2}& -R^{-1}\chi_{p+1}
    & \chi_{p}
    &\dots&(-R)^{p-2}\chi_{2}
      \\
    R^{-2}\chi_{p+3}& -R^{-1}\chi_{p+2}
    & \chi_{p+1}
    &
     \dots&(-R)^{p-2}\chi_{3}
      \\
    R^{-2}\chi_{p+4}& -R^{-1}\chi_{p+3}
    & \chi_{p+2}
    & \dots&(-R)^{p-2}\chi_{4}
      \\
    \vdots&\vdots&\vdots&&\vdots\\
    R^{-2}\chi_{2p+2}& -R^{-1}\chi_{2p+1}
    & \chi_{2p}
    & \dots&(-R)^{p-2}\chi_{p+2}
  \end{matrix}
  \right|.
\]
Again, as in the proof above, switching the order of columns removes the minus signs, and taking out the appropriate factor of $R$ from each column gives the required form of $N$ as $R^{(p+1)p/2-1}\det[\chi_{i+j+2}]_{i,j=0}^p$.
\end{proof}
Theorem~\ref{thm:hankel}, which we were aiming to prove, on the magnitude in terms of a ratio of Hankel determinants, now follows from the expressions for the numerator and denominator in Lemmas~\ref{lemma:denominator} and~\ref{lemma:numerator}.

\section{Schr\"oder path formulae for the determinants}
\label{section:SchroderPaths}
So far we know that the magnitude of an odd ball can be obtained from the ratio of two Hankel determinants of reverse Bessel polynomials.  In this section we use the fact that the generating function of reverse Bessel polynomials has a continued fraction expansion in order to get combinatorial expressions for the Hankel determinants in terms of `Schr\"oder paths'.  This will allow us to give the degrees of the numerator and denominator, to give their leading terms and to show that all of the coefficients are positive.

\subsection{Main results}
In this section we will state the combinatorial expression for the Hankel determinants of reverse Bessel polynomials (this will be proved in the next section) and then use this to prove all the remaining results about the magnitude of odd balls.

There is a beautiful theory relating Hankel determinants of sequences with a continued fraction expansion to counting weighted Schr\"oder paths.
Let us begin with some notation.  

A \define{Schr\"oder path} is a finite directed path on the lattice $\Z^2$ where  each step in the path is either an \define{ascent}, going from $(x,y)$ to $(x+1, y+1)$, a \define{descent}, going from $(x,y)$ to $(x+1,y-1)$, or a \define{flat step}, going from $(x,y)$ to $(x+2,y)$.  Four such paths are shown in Figure~\ref{figure:3Collection} (one of the paths is an empty path).

For $i\in\{0,1,2, \dots\}$ define points $P_i:=(-i, i)$ and $Q_i:=(i, i)$. 
Define a \define{disjoint $k$-collection}  to be a set of disjoint Schr\"oder paths from $\{P_i\}_{i=0}^{k}$ to $\{Q_i\}_{i=0}^{k}$, where disjoint means that they have no vertices in common.   (Note that paths cannot cross as the vertices will lie on the sub-lattice of points where the sum of coordinates is even.)  A $3$-collection is shown in Figure~\ref{figure:3Collection}. 
Let $X_k$ be the set of such $k$-collections.

As an aside, it is perhaps worth mentioning, although it does not seem to be important here, that the disjoint $k$-collections are in bijection with the perfect matchings of an Aztec diamond (see~\cite{EuFu:Aztec}) and thus there are  $2^{k(k+1)/2}$  of them.
All eight disjoint $2$-collections are shown in Figure~\ref{fig:Schroeder_X_2}.

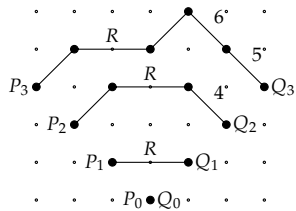
\begin{figure}
\begin{center}
\begin{tikzpicture}[scale=0.5]\scriptsize
    \foreach \i in {-3,...,3}
      \foreach \j in {0,...,5}
        \draw (\i,\j) circle[radius=0.03];
 \foreach \i in {0,...,3}{
 \node[right] at (\i, \i) {$Q_{\i}$};
 \node[left] at (-\i, \i) {$P_{\i}$};
 };
\filldraw  (0, 0) circle [radius=0.1];
\filldraw  (-1,1) circle [radius=0.1] -- node[above ]{$R$} 
++(2,0) circle[radius=0.1];
\filldraw  (-2,2) circle [radius=0.1] -- ++(1, 1) circle [radius=0.1, fill=red]--  node[above]{$R$} ++(2,0) circle[radius=0.1] -- node[above right]{$4$}  ++(1,-1) circle[radius=0.1];
\filldraw [] (-3,3) circle [radius=0.1] -- ++(1, 1) circle [radius=0.1, fill=red]-- node[above]{$R$} ++(2,0) circle[radius=0.1] -- ++(1,1) circle[radius=0.1] -- node[above right]{$6$} ++(1,-1) circle[radius=0.1] -- node[above right]{$5$} ++(1,-1) circle[radius=0.1];
\end{tikzpicture}
\end{center}
\caption{A $3$-collection $\sigma$ with the associated weights of $W_0$ marked on (neglecting the trivial weights on the ascending steps).  Thus $W_0(\sigma, R)=120R^3$.  Subtracting two from each marked numerical weight gives $W_2(\sigma, R)=2\times 3\times 4\times R^3= 24R^3$. }
\label{figure:3Collection}
\end{figure}

A \define{path weighting} will be a way to associate a weight to each step in a path.  Let $W_0$ be the following path weighting:  associate $1$ to each ascending step, the indeterminate $R$ to each flat step and $y+1$ to each descending step which starts at height $y$.  See Figure~\ref{figure:3Collection} for an example.  For a $k$-collection $\sigma$ we define $W_0(\sigma,R)$ to be the product of the weights of all of the steps in the collection.

Similarly, let $W_2$ be the path weighting which associates $1$ to each ascending step, $R$ to each flat step and $y$ to each descending step which starts at height $y-1$.

We can now state the theorem, proved in the next subsection, which gives a combinatorial expression for each Hankel determinant of reverse Bessel polynomials that we are interested in, in terms of weighted sums of $k$-collections of Schr\"oder paths.
\begin{thm}
\label{thm:CombinatorialDeterminant}
Recalling that $\super(k):=\prod_{i=0}^k i!$ is the $k$th superfactorial, we have
\begin{align*}
  \det\left[\chi_{i+j}(R)\right]_{i,j=0}^p
  &=
  R^p\super(p)\sum_{\sigma\in X_{p-1}}W_0(\sigma, R)
  \\
  \det\left[\chi_{i+j+2}(R)\right]_{i,j=0}^p
  &=
  R^{p+1}\super(p)\sum_{\sigma\in X_{p+1}}W_2(\sigma, R).
\end{align*}
\end{thm}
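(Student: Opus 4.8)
The plan is to run the classical machinery linking Hankel determinants, continued fractions and non-intersecting lattice paths, built on the Thron-type continued fraction of the generating function $\sum_{m\ge 0}\chi_m(R)\,t^m$ from Theorem~\ref{thm:ContinuedFraction} together with the Lindstr\"om--Gessel--Viennot lemma, Theorem~\ref{thm:KMLGV}. By Flajolet's fundamental correspondence a Thron fraction is the generating function for weighted Schr\"oder paths, so after the standard ``uncontraction'' that resolves each composite partial coefficient into elementary ascending, descending and flat steps one obtains a weighted digraph --- supported on the sublattice where the sum of coordinates is even --- in which the reverse Bessel polynomials appear as walk generating functions and the elementary step weights are exactly those of $W_0$: ascending $1$, flat $R$, descending from height $y$ weight $y+1$. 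The first concrete goal is therefore a \emph{bilinear factorization} of the Hankel matrices: that, up to a monomial prefactor depending separately on the row and column index, the entry $\chi_{i+j}(R)$ is a $W_0$-weighted count of Schr\"oder paths between a suitable source and a suitable sink, and $\chi_{i+j+2}(R)$ is the analogous count for the weighting $W_2$ --- the shift of the index by two lowers the floor of the height coordinate, so every descending weight drops by two and $W_0$ is replaced by $W_2$.

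Granting such a factorization, the determinant is computed by the Lindstr\"om--Gessel--Viennot lemma. The rank-one prefactors pull out of $\det[\chi_{i+j}]_{i,j=0}^p$, leaving a power of $R$ times the determinant of a pure path-count matrix; since all the sources and sinks lie on the sublattice with even coordinate sum, two Schr\"oder paths between them cannot cross without meeting at a vertex, so the identity is the only permutation admitting a non-intersecting family, every sign in the lemma is $+1$, and the determinant reduces to a sum of products of step weights over disjoint collections of paths. What is left is the boundary bookkeeping: the forced short ascending and descending runs at the outermost sources and sinks contribute descending weights $1,2,3,\dots$ whose product is $\super(p)$, along with the remaining monomial in $R$; once these are peeled off, the free part of the configuration ranges over $X_{p-1}$ in the case of $\det[\chi_{i+j}]_{i,j=0}^p$ and over $X_{p+1}$ in the case of $\det[\chi_{i+j+2}]_{i,j=0}^p$, which gives the two displayed identities.

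I expect the genuine obstacle to be the first step --- fixing the precise digraph, sources and sinks, and above all explaining why the two Hankel shifts lead to $W_0$ versus $W_2$ with the collection index $X_{p-1}$ versus $X_{p+1}$ moving in opposite directions. Rather than track the continued fraction all the way through the uncontraction and the Lindstr\"om--Gessel--Viennot apparatus, I would prove the bilinear factorization directly by induction on $i+j$ from the three-term recursion~\eqref{eq:ChiRecursion}, checking the base cases $\chi_0=1$, $\chi_1=R$, $\chi_2=R^2+R$ against the small Schr\"oder paths; this keeps the argument self-contained and isolates the one real computation. After that the vanishing of the signs and the factorial and power-of-$R$ accounting are routine, so the combinatorial formulae follow --- and with them, through Theorem~\ref{thm:hankel}, the remaining assertions of the Main Theorem.
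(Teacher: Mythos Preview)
Your overall strategy --- Thron continued fraction, Flajolet's path interpretation, then Lindstr\"om--Gessel--Viennot --- is exactly the paper's. But two claims in your first paragraph are wrong, and they are not side issues: they are precisely where the content lies.

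The step weights coming from Theorem~\ref{thm:ContinuedFraction} are \emph{not} those of $W_0$. Matching against the standard Thron form gives $\alpha=(R,1,2,3,\dots)$ and $\delta=(0,0,R,R,\dots)$: the bottom two levels are irregular, with $\alpha_1=R$ (not an integer) and $\delta_1=\delta_2=0$. There is no ``uncontraction'' to perform, and there is no bilinear factorization with a monomial prefactor either: $\chi_{i+j}(R)$ is already exactly the weighted count of Schr\"oder paths from $U_i=(-2i,0)$ to $V_j=(2j,0)$ under this irregular weighting, with no prefactor. The irregularity is the whole mechanism. After Lindstr\"om--Gessel--Viennot, the zero weights $\delta_1=\delta_2=0$ together with disjointness force every path in the non-intersecting family outside a triangular sector with apex at height~$2$; the forced descent of the $i$th path carries weights $\alpha_{i+1},\dots,\alpha_2,\alpha_1$, i.e.\ $i,\,i-1,\dots,1,\,R$, contributing $i!\cdot R$, so the total forced factor is $\super(p)\,R^p$. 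Inside the sector the weighting is ``descent from height $y$ gets $\alpha_y=y-1$, flat gets $R$''; shifting the apex down to the origin turns this into $W_0$ on an element of $X_{p-1}$.

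Your account of the shift --- ``lowers the floor, so every descending weight drops by two'' --- is also not what happens. The shift by two moves the endpoints to $U_1,\dots,U_{p+1}$ and $V_1,\dots,V_{p+1}$; the sector now reaches down to the origin and its free region is \emph{larger}, enclosing a $(p+1)$-collection rather than a smaller one. The new subtlety is that, with $U_0$ absent, a path may dip to $(0,0)$ and pick up the anomalous weight $\alpha_1=R$, while the competing flat step at height~$1$ has weight $\delta_2=0$. The paper handles this with an involution that trades the dip for the flat step; only after that swap does the interior weighting become uniform and equal to $W_2$, and only then can one adjoin the trivial path at $P_0$ to obtain an element of $X_{p+1}$. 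Your proposed induction on $i+j$ via~\eqref{eq:ChiRecursion} could in principle recover the path interpretation of a single $\chi_m$, but it cannot produce the forced/free decomposition or the involution: those are geometric arguments about the whole non-intersecting family, and they are where $R^p\super(p)$, $R^{p+1}\super(p)$, $X_{p-1}$ and $X_{p+1}$ actually come from.
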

Before proving this we will see some examples and prove the important consequences for magnitude.  Combining the above with Theorem~\ref{thm:hankel} we immediately get the following combinatorial expression for the magnitude of an odd ball.
\begin{cor} For $n=2p+1$ the magnitude of an $n$-ball of radius $R$ is as follows:
\[
  \left|B^n_R\right|=\frac{\sum_{\sigma\in X_{p+1}}W_2(\sigma, R)}
  {n!\,\sum_{\sigma\in X_{p-1}}W_0(\sigma, R)}
\]
\end{cor}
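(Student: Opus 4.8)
The plan is simply to combine the two results this corollary cites. First I would recall Theorem~\ref{thm:hankel}, which expresses the magnitude as the ratio $\det[\chi_{i+j+2}(R)]_{i,j=0}^p \big/ \bigl(n!\,R\,\det[\chi_{i+j}(R)]_{i,j=0}^p\bigr)$. Then I would substitute the two evaluations supplied by Theorem~\ref{thm:CombinatorialDeterminant}: the denominator Hankel determinant equals $R^p\super(p)\sum_{\sigma\in X_{p-1}}W_0(\sigma,R)$, and the numerator Hankel determinant equals $R^{p+1}\super(p)\sum_{\sigma\in X_{p+1}}W_2(\sigma,R)$.

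The only arithmetic to carry out is the cancellation of the spurious factors. In the numerator one has $R^{p+1}\super(p)$ times the $W_2$-sum; in the denominator one has $n!\,R$ multiplied by $R^p\super(p)$ times the $W_0$-sum, that is, $n!\,R^{p+1}\super(p)$ times the $W_0$-sum. Dividing, the common factor $R^{p+1}\super(p)$ disappears and one is left with exactly $\sum_{\sigma\in X_{p+1}}W_2(\sigma,R)\big/\bigl(n!\sum_{\sigma\in X_{p-1}}W_0(\sigma,R)\bigr)$, as claimed.

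I do not expect any real obstacle: the corollary is a one-line consequence once Theorems~\ref{thm:hankel} and~\ref{thm:CombinatorialDeterminant} are available, and all the genuine work lies upstream — the determinant-reduction computation behind Theorem~\ref{thm:hankel} and the continued-fraction together with Lindstr\"om--Gessel--Viennot argument behind Theorem~\ref{thm:CombinatorialDeterminant}. The one point worth highlighting in the write-up is that both Hankel determinants carry the common monomial factor $\super(p)R^p$ (with one extra power of $R$ in the numerator), which is precisely why the ratio simplifies so cleanly; this is also the same cancellation that underlies the definitions of $N_p(R)$ and $D_p(R)$ in the Main Theorem, obtained by dividing the Hankel determinants by $\super(p)R^{p+1}$ and $\super(p)R^{p}$ respectively.
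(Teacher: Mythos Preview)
Your proposal is correct and matches the paper's own approach exactly: the corollary is stated immediately after Theorem~\ref{thm:CombinatorialDeterminant} with the remark that combining it with Theorem~\ref{thm:hankel} ``immediately'' gives the result. The cancellation of the common factor $R^{p+1}\super(p)$ that you spell out is precisely the one-line computation the paper leaves implicit.
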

This leads us to define the $p$th \define{numerator polynomial} and $p$th \define{denominator polynomial}, respectively, as follows:
\[
N_p(R):=\sum_{\sigma\in X_{p+1}}W_2(\sigma, R);\qquad
D_p(R):=\sum_{\sigma\in X_{p-1}}W_0(\sigma, R).
\]

We can calculate some examples.  The $2$-collections in $X_2$ are shown in Figure~\ref{fig:Schroeder_X_2} labelled with the weightings from $W_0$.  From that we see
\begin{align*}
  D_3(R)=\sum_{\sigma\in X_{2}}W_0(\sigma, R)
  &=
  R^3+(4+4+4)R^2+(12+16+20)R+60
  =R^3+12R^2+48R+60,
\end{align*}
and, subtracting $2$ from each numerical label, we also see
\begin{align*}
  N_1(R)=\sum_{\sigma\in X_{2}}W_2(\sigma, R)
  &=
  R^3+(2+2+2)R^2+(2+4+6)R+6
  =R^3+6R^2+12R+6.
\end{align*}
You can check that these are indeed the denominator and numerator polynomials of the magnitudes $|B^7_R|$ and $|B^3_R|$, respectively, as given in the introduction.

\begin{figure}
\begin{center}
\begin{tikzpicture}[scale=0.5]\scriptsize
\tikzstyle{every circle}=[radius=0.1]
\foreach \i in {-2,...,2} \foreach \j in {2,...,6}
  \draw (\i,\j) circle[radius=0.03];
\filldraw(0, 2) circle;
\filldraw(-1, 3) circle -- (0, 4)  circle -- node[above right=-0.2em]{$3$} (1, 3)  circle;
\filldraw(-2, 4) circle -- (-1, 5)  circle -- (0, 6)  circle -- node[above right=-0.2em]{$5$} (1, 5)  circle -- node[above right=-0.2em]{$4$} (2, 4)  circle;
\end{tikzpicture}
\qquad
\begin{tikzpicture}[scale=0.5]\scriptsize
\tikzstyle{every circle}=[radius=0.1]
\foreach \i in {-2,...,2} \foreach \j in {2,...,6}
  \draw (\i,\j) circle[radius=0.03];
\filldraw(0, 2) circle;
\filldraw(-1, 3) circle -- (0, 4)  circle -- node[above right=-0.2em]{$3$} (1, 3)  circle;
\filldraw(-2, 4) circle -- (-1, 5)  circle -- node[above=-0.1em]{$R$} (1, 5)  circle -- node[above right=-0.2em]{$4$} (2, 4)  circle;
\end{tikzpicture}
\qquad
\begin{tikzpicture}[scale=0.5]\scriptsize
\tikzstyle{every circle}=[radius=0.1]
\foreach \i in {-2,...,2} \foreach \j in {2,...,6}
  \draw (\i,\j) circle[radius=0.03];
\filldraw(0, 2) circle;
\filldraw(-1, 3) circle -- node[above=-0.1em]{$R$} (1, 3)  circle;
\filldraw(-2, 4) circle -- (-1, 5)  circle -- (0, 6)  circle -- node[above right=-0.2em]{$5$} (1, 5)  circle -- node[above right=-0.2em]{$4$} (2, 4)  circle;
\end{tikzpicture}
\qquad
\begin{tikzpicture}[scale=0.5]\scriptsize
\tikzstyle{every circle}=[radius=0.1]
\foreach \i in {-2,...,2} \foreach \j in {2,...,6}
  \draw (\i,\j) circle[radius=0.03];
\filldraw(0, 2) circle;
\filldraw(-1, 3) circle -- node[above=-0.1em]{$R$} (1, 3)  circle;
\filldraw(-2, 4) circle -- (-1, 5)  circle -- node[above=-0.1em]{$R$} (1, 5)  circle -- node[above right=-0.2em]{$4$} (2, 4)  circle;
\end{tikzpicture}
\qquad
\\[2em]
\begin{tikzpicture}[scale=0.5]\scriptsize
\tikzstyle{every circle}=[radius=0.1]
\foreach \i in {-2,...,2} \foreach \j in {2,...,6}
  \draw (\i,\j) circle[radius=0.03];
\filldraw(0, 2) circle;
\filldraw(-1, 3) circle -- node[above=-0.1em]{$R$} (1, 3)  circle;
\filldraw(-2, 4) circle -- (-1, 5)  circle -- node[above right=-0.2em]{$4$} (0, 4)  circle -- (1, 5)  circle -- node[above right=-0.2em]{$4$} (2, 4)  circle;
\end{tikzpicture}
\qquad
\begin{tikzpicture}[scale=0.5]\scriptsize
\tikzstyle{every circle}=[radius=0.1]
\foreach \i in {-2,...,2} \foreach \j in {2,...,6}
  \draw (\i,\j) circle[radius=0.03];
\filldraw(0, 2) circle;
\filldraw(-1, 3) circle -- node[above=-0.1em]{$R$} (1, 3)  circle;
\filldraw(-2, 4) circle -- (-1, 5)  circle -- node[above right=-0.2em]{$4$} (0, 4)  circle -- node[above=-0.1em]{$R$} (2, 4)  circle;
\end{tikzpicture}
\qquad
\begin{tikzpicture}[scale=0.5]\scriptsize
\tikzstyle{every circle}=[radius=0.1]
\foreach \i in {-2,...,2} \foreach \j in {2,...,6}
  \draw (\i,\j) circle[radius=0.03];
\filldraw(0, 2) circle;
\filldraw(-1, 3) circle -- node[above=-0.1em]{$R$} (1, 3)  circle;
\filldraw(-2, 4) circle -- node[above=-0.1em]{$R$} (0, 4)  circle -- (1, 5)  circle -- node[above right=-0.2em]{$4$} (2, 4)  circle;
\end{tikzpicture}
\qquad
\begin{tikzpicture}[scale=0.5]\scriptsize
\tikzstyle{every circle}=[radius=0.1]
\foreach \i in {-2,...,2} \foreach \j in {2,...,6}
  \draw (\i,\j) circle[radius=0.03];
\filldraw(0, 2) circle;
\filldraw(-1, 3) circle -- node[above=-0.1em]{$R$} (1, 3)  circle;
\filldraw(-2, 4) circle -- node[above=-0.1em]{$R$} (0, 4)  circle -- node[above=-0.1em]{$R$} (2, 4)  circle;
\end{tikzpicture}
\qquad
\end{center}
\caption{The eight $2$-collections of Schr\"oder paths in $X_2$, labelled with the weightings from $W_0$.}
\label{fig:Schroeder_X_2}
\end{figure}
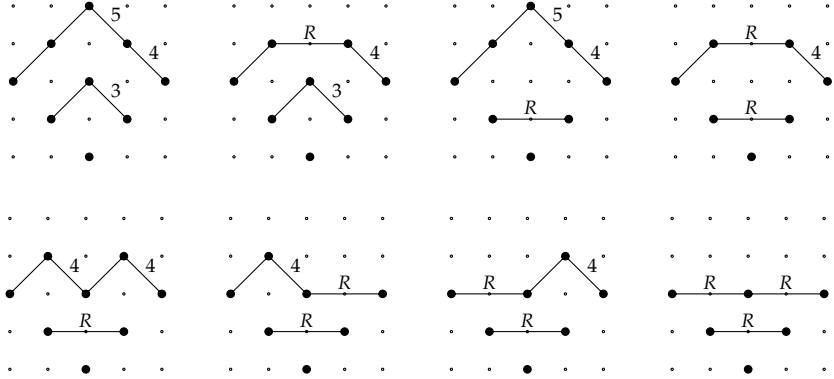

We can now prove some basic facts about the numerator and denominator which you would hope to be true if you had stared at the examples in the introduction and done some further computation.  These are not at all obvious from the Barcel\'o and Carbery algorithm nor from the determinantal formulae.
\begin{thm}
\begin{enumerate}
  \item The numerator polynomial $N_p(R)$ is  monic  of degree $\frac{(p+1)(p+2)}{2}$ with all coefficients positive.
  \item The denominator polynomial $D_p(R)$ is monic  of degree $\frac{p(p-1)}{2}$ with all coefficients positive.
  \item The constant terms are related by $N_p(0)=n!\,D_p(0)$.
\end{enumerate}
\end{thm}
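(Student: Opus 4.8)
The plan is to read off all three statements directly from the Schröder path description $N_p(R)=\sum_{\sigma\in X_{p+1}}W_2(\sigma,R)$ and $D_p(R)=\sum_{\sigma\in X_{p-1}}W_0(\sigma,R)$, treating each $k$-collection as contributing a single monomial. The first task is to understand, for a disjoint $k$-collection $\sigma$ of Schröder paths from $\{P_i\}_{i=0}^k$ to $\{Q_i\}_{i=0}^k$, how many flat steps it can have and how the weight behaves at the extremes. Since each path from $P_i=(-i,i)$ to $Q_i=(i,i)$ must travel horizontal distance $2i$, and an ascent/descent advances $x$ by $1$ while a flat step advances $x$ by $2$, a path with $a$ ascents, $a$ descents (it returns to the same height) and $f$ flat steps satisfies $2a+2f=2i$, so $a+f=i$; the total number of flat steps over the whole collection is therefore at most $\sum_{i=0}^k i = k(k+1)/2$, with equality exactly for the unique all-flat collection (each $P_i\to Q_i$ made of $i$ flat steps, which are pairwise disjoint since they sit at distinct heights).

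For the degree and monic claims: the power of $R$ in $W_0(\sigma,R)$ (resp.\ $W_2(\sigma,R)$) is exactly the number of flat steps in $\sigma$, since flats carry $R$ and ascents/descents carry integer weights. Hence $\deg D_p = \max_{\sigma\in X_{p-1}}(\#\text{flats}) = (p-1)p/2$, achieved by the single all-flat collection, whose descent/ascent weights are empty products equal to $1$; so the leading coefficient of $D_p$ is $1$, i.e.\ $D_p$ is monic of degree $p(p-1)/2$. Identically, $\deg N_p = \max_{\sigma\in X_{p+1}}(\#\text{flats}) = (p+1)(p+2)/2$ with leading coefficient $1$. Positivity of all coefficients is immediate once I check that $W_0$ and $W_2$ assign \emph{strictly positive} weights to every step: ascents get $1>0$, flats get the symbol $R$ (contributing to a coefficient, not a sign), and a descent starting at height $y$ gets $y+1$ under $W_0$ and $y$ under $W_2$. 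The only worry is a descent at height $0$ under $W_2$, weight $0$; but a descent starting at height $0$ would go to height $-1$, and no vertex of any path in the collection has negative height (all paths run between $P_i$ and $Q_i$ with $i\ge 0$ and, being disjoint and non-crossing, stay in the region $y\ge 0$ — indeed $y\ge$ the index of the path they belong to, which is $\ge 0$, below the starting/ending height as paths are ``mountain-shaped''). More carefully, I will argue that in a disjoint collection the path from $P_0$ to $Q_0$ is forced to stay at height $0$ (it is the bottom path and cannot dip below its endpoints), the path from $P_1$ to $Q_1$ stays at height $\ge 1$ except it may not touch height $0$ because that is occupied, and so on; so every descent in a collection in $X_{p+1}$ starts at height $\ge 1$, giving a strictly positive $W_2$-weight. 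Therefore every summand is a monomial with a positive integer coefficient, each coefficient of $N_p$ and $D_p$ is a sum of such, hence positive; since these polynomials are monic their constant terms are in particular nonzero.

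For $N_p(0)=n!\,D_p(0)$: setting $R=0$ kills every collection containing a flat step, so $N_p(0)=\sum_{\sigma\in X_{p+1}^{\flat=0}}W_2(\sigma,0)$ where $X_{k}^{\flat=0}$ denotes flat-free collections, i.e.\ disjoint families of Dyck-type paths from $\{P_i\}$ to $\{Q_i\}$ using only $\pm1$ steps, and similarly for $D_p(0)$. A flat-free path from $P_i=(-i,i)$ to $Q_i=(i,i)$ has $a+f=i$ with $f=0$, so $a=i$ ascents then $i$ descents: it is the unique ``tent'' going straight up to $(0,2i)$ then straight down. Thus there is exactly one flat-free collection in each $X_k$, namely the nested family of tents $\sigma_k$. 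Its $W_0$-weight is the product over all descents of (starting height $+1$): the $i$th tent descends through starting heights $2i, 2i-1,\dots,i+1$, contributing $\prod_{h=i+1}^{2i}(h+1)=\frac{(2i+1)!}{(i+1)!}$, so $D_p(0)=W_0(\sigma_{p-1},0)=\prod_{i=0}^{p-1}\frac{(2i+1)!}{(i+1)!}$. Its $W_2$-weight replaces each descent weight $y+1$ by $y$: the $i$th tent now contributes $\prod_{h=i+1}^{2i}h=\frac{(2i)!}{i!}$, so $N_p(0)=W_2(\sigma_{p+1},0)=\prod_{i=0}^{p+1}\frac{(2i)!}{i!}$. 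It then remains to verify the elementary identity $\prod_{i=0}^{p+1}\frac{(2i)!}{i!} = (2p+1)!\,\prod_{i=0}^{p-1}\frac{(2i+1)!}{(i+1)!}$, which I would do by writing both sides as ratios of superfactorials (the left side is $\super_{\mathrm{even}}$-type, the right involves $\super$ of odd numbers) or simply by induction on $p$, comparing the ratio of consecutive terms. I expect the main obstacle to be the careful combinatorial bookkeeping of descent heights in the nested-tent collection and pinning down that it is genuinely the only flat-free collection; once that is nailed, the superfactorial identity is routine. (This also re-derives $|B^n_R|\to1$ as $R\to0$, since $N_p(0)/(n!\,D_p(0))=1$.)
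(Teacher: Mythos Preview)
Your approach is the paper's, but there are two genuine gaps.

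First, on positivity: showing that every summand $W_2(\sigma,R)$ or $W_0(\sigma,R)$ is a monomial with a strictly positive integer coefficient only gives \emph{non-negativity} of each coefficient of $N_p$ and $D_p$. An empty sum is zero, so ``a sum of such'' is not automatically positive; to conclude that the coefficient of $R^i$ is nonzero you must exhibit at least one collection with exactly $i$ flat steps, for every $0\le i\le\deg$. The paper supplies this missing step by an explicit construction: start from the all-flat collection and remove flat steps one at a time from the top path (replacing them with an ascent/descent pair), then proceed to the next path down, and so on until no flat steps remain.

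Second, your reading of $W_2$ is off by one. From the paper's examples (e.g.\ Figure~1: ``subtracting two from each marked numerical weight gives $W_2$'', and the computation of $N_1(R)$ from Figure~2) a descent starting at height $h$ carries $W_2$-weight $h-1$, not $h$. With your convention the tent at level $i$ contributes $(2i)!/i!$, giving $N_1(0)=24$, whereas the correct value is $6$. With the right weight, tent $i$ contributes $\prod_{h=i+1}^{2i}(h-1)=\frac{(2i-1)!}{(i-1)!}$, so $N_p(0)=\prod_{i=1}^{p+1}\frac{(2i-1)!}{(i-1)!}$; dividing by your (correct) $D_p(0)=\prod_{i=0}^{p-1}\frac{(2i+1)!}{(i+1)!}=\prod_{j=1}^{p}\frac{(2j-1)!}{j!}$ then telescopes to $(2p+1)!=n!$. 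The paper avoids this product computation altogether by a direct geometric matching: it observes that the $W_0$-weighted descents of $\sigma^{p-1}_{\mathrm{roof}}$ can be identified with a subset of the $W_2$-weighted descents of $\sigma^{p+1}_{\mathrm{roof}}$, the leftover weights being exactly $1,2,\dots,2p+1$. Note also that with the correct $W_2$ you must rule out descents from height~$1$ (weight $0$), not just height~$0$; this holds because $(0,0)$ is occupied by path~$0$, forcing path~$1$ to stay at height~$\ge 1$ and hence any descent on it to start at height~$\ge 2$, with higher paths higher still by disjointness. Finally, your concern about uniqueness of the flat-free collection is justified: a flat-free path from $(-i,i)$ to $(i,i)$ with $i$ ascents and $i$ descents is \emph{not} automatically the tent (e.g.\ UDUD for $i=2$); it is disjointness from the lower tents that forces the tent shape, by induction.
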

\enlargethispage*{1em}
\begin{proof}
\begin{enumerate}
\item The highest degree monomials contributing to $N_p(R)$ come from $(p+1)$-collections with the maximal number of flat steps; there is precisely one of those, where all of the steps are flat, as in the last picture of Figure~\ref{fig:Schroeder_X_2}, and its weighting is $R^{(p+1)(p+2)/2}$, thus $N_p(R)$ is monic of the required degree.

Clearly each coefficient in $N_p(R)$ is non-negative.  To show that the coefficient of $R^i$ is non-zero for each $0\le i \le (p+1)(p+2)/2$ it suffices to show that there is a $(p+1)$-collection with $i$ flat steps.  We can start with the $(p+1)$-collection with $(p+1)(p+2)/2$ flat steps as above, then remove flat steps from the top line, one at a time, until we have a roof shape, i.e.~a single inverted `v'.  Then we can remove flat steps from the second-to-top line, and so on, until we have no flat steps, but just a decreasing sequences of roofs as in the first picture of Figure~\ref{fig:Schroeder_X_2}. 
\item This is proved similarly to the above.
\item The constant term of $N_p(R)$ is given by $W_2(\sigma^{p+1}_{\textrm{roof}}, R)$ where $\sigma^{p+1}_{\textrm{roof}}$ is the $(p+1)$-collection consisting of $p+1$ roofs on top of each other and no flat steps.  Similarly, the constant term of $D_p(R)$ is given by $W_0(\sigma^{p-1}_{\textrm{roof}}, R)$.  In Figure~\ref{fig:ConstantCollections} you can see that you can identify  $\sigma^{p-1}_{\textrm{roof}}$ weighted with $W_0$ as sitting inside $\sigma^{p+1}_{\textrm{roof}}$ weighted with $W_2$.  The extraneous weights are $1,2,3,\dots, 2p+1$ and thus
\[
  W_2(\sigma^{p+1}_{\textrm{roof}}, R)
  = n!\, W_0(\sigma^{p-1}_{\textrm{roof}}, R)
\]
as required.\qedhere
\end{enumerate}
\end{proof}
\begin{figure}[t]
\begin{center}
\begin{tikzpicture}[scale=0.5]\scriptsize
    \foreach \i in {-4,...,4}
      \foreach \j in {2,...,10}
        \draw (\i,\j) circle[radius=0.03];
\tikzstyle{every circle}=[radius=0.1]
\filldraw [fill=black](0, 2) circle;
\filldraw [fill=black](-1, 3) circle -- (0, 4)  circle -- node[above right=-0.2em]{$1$} (1, 3)  circle;
\filldraw [fill=black](-2, 4) circle -- (-1, 5)  circle -- (0, 6)  circle -- node[above right=-0.2em]{$3$} (1, 5)  circle -- node[above right=-0.2em]{$2$} (2, 4)  circle;
\filldraw [fill=black](-3,5) circle -- ++(1,1)  circle -- ++(1,1)  circle -- ++(1,1)  circle -- node[above right=-0.2em]{$5$} ++(1, -1) circle -- node[above right=-0.2em]{$4$} ++(1, -1) circle -- node[above right=-0.2em]{$3$} ++(1, -1)  circle;
\filldraw [fill=black](-4, 6) circle -- ++(1,1)  circle -- ++(1,1)  circle -- ++(1,1)  circle -- ++(1,1)  circle  -- node[above right=-0.2em]{$7$} ++(1, -1)  circle  -- node[above right=-0.2em]{$6$} ++(1, -1)  circle -- node[above right=-0.2em]{$5$} ++(1, -1) circle -- node[above right=-0.2em]{$4$} ++(1, -1) circle;
\draw[black, dashed] (0, 4) -- ++(2.5, 2.5) -- ++(-2.5,2.5) -- ++(-2.5,-2.5) -- ++(2.5,-2.5)  ;
\end{tikzpicture}
\end{center}
\caption{The constant terms in the polynomials $N_3(R)$ and $D_3(R)$, seen here as $W_2(\sigma^{4}_{\textrm{roof}}, R)$ and $W_0(\sigma^{2}_{\textrm{roof}}, R)$.}
\label{fig:ConstantCollections}
\end{figure}
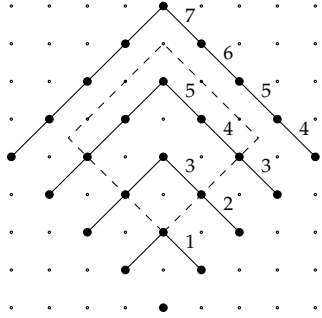

These polynomials also appear to be log-concave, as do some other polynomials arising as Hankel determinants of sequences of classical polynomials~\cite{Sokal:ContinuedFractionsBook}; however, it is not clear how to go about proving this.

Now we can examine the combinatorics a little deeper to obtain the leading terms of the numerator and denominator polynomials.
\begin{thm}
Writing $\tempdim := \frac{1}{2}(p+1)(p+2)$, we have
\begin{align*}
  N_p(R)&= R^{\tempdim} +\tfrac{(p+1)^2(p+2)}{2} R^{\tempdim-1} + \tfrac{p(p+1)^3(p+2)(p+3)}{8} R^{\tempdim-2}+O(R^{\tempdim-3}),\\
  D_p(R)&=
  R^{\tempdim-n} +\tfrac{(p-1)p(p+1)}{2} R^{\tempdim-n-1} + \tfrac{(p-2)(p-1)p(p+1)^3}{8} R^{\tempdim-n-2}+O(R^{\tempdim-n-3}).
\end{align*}
\end{thm}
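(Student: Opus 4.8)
The plan is to read the three top coefficients of $N_p$ and $D_p$ straight off their Schr\"oder-path expressions $N_p(R)=\sum_{\sigma\in X_{p+1}}W_2(\sigma,R)$ and $D_p(R)=\sum_{\sigma\in X_{p-1}}W_0(\sigma,R)$. For a $k$-collection $\sigma$ let $m_\ell(\sigma)$ be the number of ascending steps in the path of $\sigma$ from $P_\ell$ to $Q_\ell$; that path has horizontal displacement $2\ell$ and zero net rise, so it has $m_\ell(\sigma)$ descending steps and $\ell-m_\ell(\sigma)$ flat steps, whence $\sigma$ has $\tfrac12 k(k+1)-d(\sigma)$ flat steps in all, where $d(\sigma):=\sum_\ell m_\ell(\sigma)$ is the \textbf{defect}. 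Since ascending steps have weight $1$ and flat steps weight $R$, each term $W_2(\sigma,R)$ (resp.\ $W_0(\sigma,R)$) equals $R^{\,\tempdim-d(\sigma)}$ (resp.\ $R^{\,\tempdim-n-d(\sigma)}$) times the product $P(\sigma)$ of the weights of the descending steps of $\sigma$. Hence the coefficient of $R^{\tempdim-j}$ in $N_p(R)$ is $\sum P(\sigma)$ over $\sigma\in X_{p+1}$ of defect $j$, and only $j=0,1,2$ matter; likewise for $D_p$ with $X_{p-1}$ and $W_0$.

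First I would prove a structural lemma: \emph{in any $k$-collection every path stays weakly above its rest height $\ell$, and a non-flat path at level $\ell<k$ forces the path at level $\ell+1$ to be non-flat and to enclose it} (if a path reached height $\ell-1$ it would meet the path one level down, which would then have to dip as well, cascading to the forced vertex $P_0=Q_0$; and a non-flat inner arch must be covered by the outer path). Iterating, a defect-$d$ collection has its non-flat paths confined to levels $k-d+1,\dots,k$. Thus the defect-$0$ collection is unique (all flat), with $P=1$. The defect-$1$ collections are exactly those in which the outermost path makes a single up-arch to height $k+1$ and all others are flat; distributing the $k-1$ remaining flats among the three gaps of the arch gives $\binom{k+1}{2}$ of them, each with $P$ equal to the weight of its unique descent. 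For $N_p$ ($k=p+1$) that descent leaves height $p+2$, so $P=p+1$ in $W_2$ and the coefficient of $R^{\tempdim-1}$ is $\binom{p+2}{2}(p+1)=\tfrac{(p+1)^2(p+2)}{2}$; for $D_p$ ($k=p-1$) it leaves height $p$, so $P=p+1$ in $W_0$ and the coefficient of $R^{\tempdim-n-1}$ is $\binom{p}{2}(p+1)=\tfrac{(p-1)p(p+1)}{2}$.

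The substantial work is the defect-$2$ count, which the lemma splits into the case $m_k=2$ (all other paths flat) and the case $m_k=m_{k-1}=1$ (all others flat). In the first case the outermost path has two ascents and two descents and stays $\ge k$, so its profile is $UDUD$ (two arches to $k+1$) or $UUDD$ (one arch to $k+2$); in each the $k-2$ remaining flats fall into five gaps, giving $\binom{k+2}{4}$ collections of each shape, with $P$ the product of the two descent weights. In the nested case I parametrise each arch by its triple (left flats, top flats, right flats) and reduce disjointness to the two inequalities ``inner-left $\ge$ outer-left'' and ``(inner-left $+$ inner-top)$+1\le$(outer-left $+$ outer-top)''; counting the lattice points this cuts out and using the identity $\sum_{b=0}^{m}(m+1-b)\binom{b+1}{2}=\binom{m+3}{4}$ gives again $\binom{k+2}{4}$ collections, each with $P$ the product of the two descent weights (one per arch). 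Substituting the relevant descent heights for the two weightings, the three descent-weight products come out as $(p+1)^2$, $(p+2)(p+1)$ and $(p+1)p$ in both the $N_p$ and the $D_p$ setting, so summing and using $\binom{k+2}{4}=\tfrac{(k-1)k(k+1)(k+2)}{24}$ yields $3\binom{p+3}{4}(p+1)^2=\tfrac{p(p+1)^3(p+2)(p+3)}{8}$ as the coefficient of $R^{\tempdim-2}$ in $N_p$, and $3\binom{p+1}{4}(p+1)^2=\tfrac{(p-2)(p-1)p(p+1)^3}{8}$ as that of $R^{\tempdim-n-2}$ in $D_p$. All defect-$\ge 3$ collections contribute only to degrees $\le\tempdim-3$ (resp.\ $\le\tempdim-n-3$), giving the stated error terms.

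The hard part is the nested case $m_k=m_{k-1}=1$: one must determine exactly which relative horizontal placements of the inner and outer arches keep the paths vertex-disjoint, extract the precise inequalities (in particular the constant $+1$, which comes from the parity offset between consecutive rest heights), and then evaluate the resulting double sum in closed form. It is worth carrying out the small checks $p=1$ (where $N_1(R)=R^3+6R^2+12R+6$ and $D_1(R)=1$) and $p=2$ (where $N_2(R)=R^6+18R^5+135R^4+\cdots$ and $D_2(R)=R+3$) as a safeguard against off-by-one slips in the descent heights and in the enclosure condition, since the formulae are also expected to degenerate gracefully when $p$ is small enough that some of the contributions are empty.
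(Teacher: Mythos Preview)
Your proposal is correct and follows essentially the same approach as the paper: both interpret the coefficient of $R^{\tempdim-j}$ in $N_p$ (and analogously in $D_p$) as a weighted count of collections with exactly $j$ ascent/descent pairs, classify the $j=2$ case into the three shapes $UUDD$, $UDUD$, and the nested configuration, verify that each shape contributes $\binom{k+2}{4}$ collections, and read off the same descent-weight products $(p+1)(p+2)$, $(p+1)^2$, $p(p+1)$.

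The one noteworthy difference is how the nested case is counted. You parametrise the two arches by their flat-step triples, derive the disjointness inequalities, and evaluate the resulting lattice sum via the identity $\sum_{b=0}^{m}(m+1-b)\binom{b+1}{2}=\binom{m+3}{4}$. The paper instead divides each path into width-two ``zones'' and observes that, for \emph{all three} shapes, the positions of the two ascents and two descents are encoded by integers $1\le\zeta_1\le\zeta_2<\zeta_3\le\zeta_4\le k$, i.e.\ $1\le\zeta_1<\zeta_2+1<\zeta_3+1<\zeta_4+2\le k+2$, giving $\binom{k+2}{4}$ uniformly with no separate computation. This zone bookkeeping is a tidy shortcut that avoids your explicit double sum in the nested case, though your argument has the compensating virtue of making the structural lemma (paths stay weakly above their rest height, and a non-flat path forces the one above it to be non-flat) explicit rather than implicit.
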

\begin{proof}
We will prove the result for the numerator polynomial $N_p(R)$, the denominator polynomial proof is almost identical.

The subleading term in $N_p(R)$ comes from summing over $(p+1)$-collections in $X_{p+1}$ which have exactly one ascending and one descending step.  This must happen on the top path, as if both the path above and below are flat there is no room for ascending or descending.  We will split the path from $P_i$ to $Q_i$ into $i$ `zones', numbered from $1$ to $i$, each zone is of width two, so that a flat step could lie in either one or two zones.  Consider a $(p+1)$-collection with one ascending and one descending step, these occur on the top path.  Let $\zeta_1$ be the zone that the ascent lies in and $\zeta_2$ be the zone that the descent lies in.  Then \[1\le \zeta_1\le \zeta_2\le p+1,\quad \text{i.e.~}1\le \zeta_1< \zeta_2+1\le p+2,\] and such a pair of numbers determines the collection uniquely, see Figure~\ref{fig:SingleAscentCollection}, so there are $\binom{p+2}{2}$ such collections and each has $W_2$ weighting of $(p+1)R^{\kappa-1}$.  Thus the total contribution is $\binom{p+2}{2}(p+1)R^{\kappa-1}= \frac{1}{2}(p+1)^2(p+2)R^{\kappa-1}$, as required.

\begin{figure}
\begin{center}
\begin{tikzpicture}[scale=0.4]\scriptsize
\tikzstyle{every circle}=[radius=0.1]
\foreach \x in {0, 2, 4,..., 16} {
  \draw[dotted, thick] (-8+\x, 5) -- ++(0, 4.5);
}
\node at (-1, 8) {$\zeta_1$};
\node at (3, 8) {$\zeta_2$};
\filldraw [] (-8, 8) circle node[below left]{$P_{p+1}$} -- ++(2, 0) circle -- ++(2, 0) circle -- ++(2, 0) circle -- ++(1, 1) circle -- ++(2, 0) circle -- ++(2, 0) circle -- ++(1, -1) circle -- ++(2, 0) circle -- ++(2, 0) circle node[below right]{$Q_{p+1}$}; 
\filldraw [] (-7, 7) circle node[below left]{$P_{p}$} -- ++(2, 0) circle -- ++(2, 0) circle -- ++(2, 0) circle -- ++(2, 0) circle -- ++(2, 0) circle -- ++(2, 0) circle -- ++(2, 0) circle node[below right]{$Q_{p}$}; 
\filldraw [] (-6, 6) circle node[below left]{$P_{p-1}$} -- ++(2, 0) circle -- ++(2, 0) circle -- ++(2, 0) circle -- ++(2, 0) circle -- ++(2, 0) circle -- ++(2, 0) circle node[below right]{$Q_{p-1}$}; 
\end{tikzpicture}
\end{center}
\caption{The single type of $(p+1)$-collection with one ascent and one descent, with the zones of the ascent and descent marked on.}
\label{fig:SingleAscentCollection}
\end{figure}
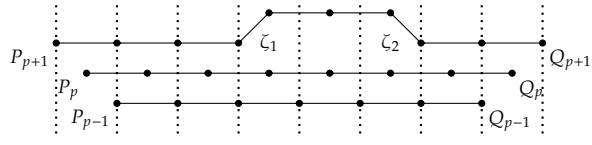

Similarly, the subsubleading term in $N_p(R)$ comes from summing over $(p+1)$-collections in $X_{p+1}$ which have exactly two ascending and two descending steps.  There are three kinds of such collection and these are pictured in Figure~\ref{fig:DoubleAscentCollection}.  In each case the zones of the ascents and descents are given by four number $\zeta_1,\zeta_2,\zeta_3,\zeta_4$ which satisfy
\begin{gather*}
  1\le \zeta_1\le \zeta_2<\zeta_3\le \zeta_4\le p+1,\\ \text{i.e.}\quad1\le \zeta_1< \zeta_2+1<\zeta_3+1< \zeta_4 +2 \le p+3.
\end{gather*}
Thus there are $\binom{p+3}{4}$ collections of each type.  The $W_2$--weightings on the three types are respectively $(p+1)(p+2)R^{\kappa-2}$, $(p+1)^2R^{\kappa-2}$ and $p(p+1)R^{\kappa-2}$.  Hence the total contribution is $\binom{p+3}{4}(p+1)\left((p+2)+(p+1)+p\right)=\frac{1}{8}p(p+1)^3(p+2)(p+3)$, as required.

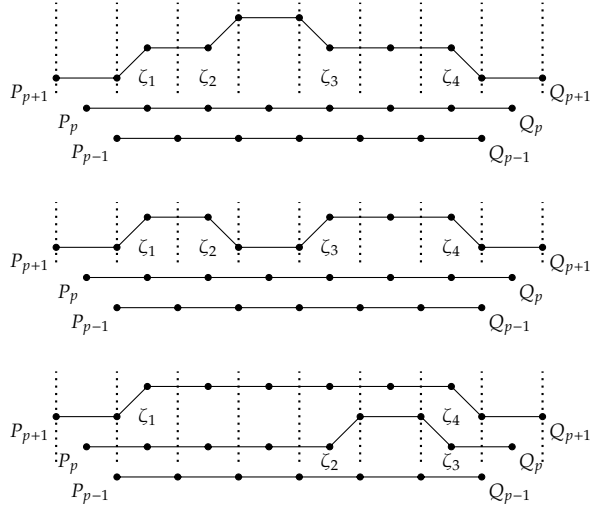
\begin{figure}
\begin{center}
\begin{tikzpicture}[scale=0.4]\scriptsize
\tikzstyle{every circle}=[radius=0.1]
\foreach \x in {0, 2, 4,..., 16} {
  \draw[dotted, thick] (-8+\x, 7.5) -- ++(0, 3);
}
\node at (-5, 8) {$\zeta_1$};
\node at (-3, 8) {$\zeta_2$};
\node at (1, 8) {$\zeta_3$};
\node at (5, 8) {$\zeta_4$};
\filldraw [] (-8, 8) circle node[below left]{$P_{p+1}$} -- ++(2, 0) circle -- ++(1, 1) circle -- ++(2, 0) circle --  ++(1, 1) circle -- ++(2, 0) circle -- ++(1, -1) circle -- ++(2, 0) circle -- ++(2, 0) circle -- ++(1, -1) circle -- ++(2, 0) circle node[below right]{$Q_{p+1}$}; 
\filldraw [] (-7, 7) circle node[below left]{$P_{p}$} -- ++(2, 0) circle -- ++(2, 0) circle -- ++(2, 0) circle -- ++(2, 0) circle -- ++(2, 0) circle -- ++(2, 0) circle -- ++(2, 0) circle node[below right]{$Q_{p}$}; 
\filldraw [] (-6, 6) circle node[below left]{$P_{p-1}$} -- ++(2, 0) circle -- ++(2, 0) circle -- ++(2, 0) circle -- ++(2, 0) circle -- ++(2, 0) circle -- ++(2, 0) circle node[below right]{$Q_{p-1}$}; 
\end{tikzpicture}
\\[1em]
\begin{tikzpicture}[scale=0.4]\scriptsize
\tikzstyle{every circle}=[radius=0.1]
\foreach \x in {0, 2, 4,..., 16} {
  \draw[dotted, thick] (-8+\x, 7.5) -- ++(0, 2);
}
\node at (-5, 8) {$\zeta_1$};
\node at (-3, 8) {$\zeta_2$};
\node at (1, 8) {$\zeta_3$};
\node at (5, 8) {$\zeta_4$};
\filldraw [] (-8, 8) circle node[below left]{$P_{p+1}$} -- ++(2, 0) circle -- ++(1, 1) circle -- ++(2, 0) circle --  ++(1,- 1) circle -- ++(2, 0) circle -- ++(1, 1) circle -- ++(2, 0) circle -- ++(2, 0) circle -- ++(1, -1) circle -- ++(2, 0) circle node[below right]{$Q_{p+1}$}; 
\filldraw [] (-7, 7) circle node[below left]{$P_{p}$} -- ++(2, 0) circle -- ++(2, 0) circle -- ++(2, 0) circle -- ++(2, 0) circle -- ++(2, 0) circle -- ++(2, 0) circle -- ++(2, 0) circle node[below right]{$Q_{p}$}; 
\filldraw [] (-6, 6) circle node[below left]{$P_{p-1}$} -- ++(2, 0) circle -- ++(2, 0) circle -- ++(2, 0) circle -- ++(2, 0) circle -- ++(2, 0) circle -- ++(2, 0) circle node[below right]{$Q_{p-1}$}; 
\end{tikzpicture}
\\[1em]
\begin{tikzpicture}[scale=0.4]\scriptsize
\tikzstyle{every circle}=[radius=0.1]
\foreach \x in {0, 2, 4,..., 16} {
  \draw[dotted, thick] (-8+\x, 6.5) -- ++(0, 3);
}
\node at (-5, 8) {$\zeta_1$};
\node at (1, 6.5) {$\zeta_2$};
\node at (5, 6.5) {$\zeta_3$};
\node at (5, 8) {$\zeta_4$};
\filldraw [] (-8, 8) circle node[below left]{$P_{p+1}$} -- ++(2, 0) circle -- ++(1, 1) circle -- ++(2, 0)  circle -- ++(2, 0) circle -- ++(2, 0) circle -- ++(2, 0) circle -- ++(2, 0) circle -- ++(1, -1) circle -- ++(2, 0) circle node[below right]{$Q_{p+1}$}; 
\filldraw [] (-7, 7) circle node[below left]{$P_{p}$} -- ++(2, 0) circle -- ++(2, 0) circle -- ++(2, 0) circle -- ++(2, 0) circle -- ++(1, 1) circle -- ++(2, 0) circle --  ++(1,- 1) circle -- ++(2, 0) circle node[below right]{$Q_{p}$}; 
\filldraw [] (-6, 6) circle node[below left]{$P_{p-1}$} -- ++(2, 0) circle -- ++(2, 0) circle -- ++(2, 0) circle -- ++(2, 0) circle -- ++(2, 0) circle -- ++(2, 0) circle node[below right]{$Q_{p-1}$}; 
\end{tikzpicture}
\end{center}
\caption{The three types of $(p+1)$-collection with two ascents and two descents, with the zones of the ascents and descents marked on.}
\label{fig:DoubleAscentCollection}
\end{figure}

For the denominator polynomial $D_p(R)$ we consider essentially the same collections as above but in $X_{p-1}$ with the path weighting $W_0$.  Then there are $\binom{p-1}{2}$ collections with one ascent and one descent, each of which has $W_0$--weighting of $(p+1)R^{\kappa-n-1}$ giving the required contribution.

For the three types of collections with two ascents and two descents there are $\binom{p+1}{4}$ of them with  $W_0$--weightings respectively of $(p+1)(p+2)R^{\kappa-n-2}$, $(p+1)^2R^{\kappa-n-2}$ and $p(p+1)R^{\kappa-n-2}$, giving the required contribution.
\end{proof}

Using the above leading terms of the polynomials together with long division and the fact that $|B_R^n|=\frac{N_p(R)}{n!\,D_p(R)}$ gives the following corollary. 
\begin{cor} Asymptotically, the magnitude of an odd dimensional ball has the following form:
\[
  \left|B_R^n\right|=\frac{1}{n!}\left(R^n + \tfrac{n(n+1)}{2}R^{n-1}+ \tfrac{(n-1)n (n+1)^2}{8}R^{n-2}\right) + O(R^{n-3})
   \quad\text{as } R\to \infty.\tag*{$\square$}
\]
\end{cor}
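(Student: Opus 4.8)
The plan is to substitute the expansions of $N_p(R)$ and $D_p(R)$ from the preceding theorem into the identity $\left|B^n_R\right| = N_p(R)/\bigl(n!\,D_p(R)\bigr)$ and carry out the asymptotic long division. Write $n = 2p+1$ and $\kappa := \tfrac12(p+1)(p+2)$, so that $\deg N_p = \kappa$ while $\deg D_p = \kappa - n$, and $D_p$ is monic and therefore nonvanishing for all large $R$. Since $N_p/D_p$ is then a rational function whose expansion at infinity is a genuine series in $R^{-1}$, it suffices to compute that series to order $R^{-2}$. Factoring out the leading monomials,
\[
  \frac{N_p(R)}{D_p(R)} = R^n\cdot\frac{1 + a_1 R^{-1} + a_2 R^{-2} + O(R^{-3})}{1 + b_1 R^{-1} + b_2 R^{-2} + O(R^{-3})},
\]
with $a_1 = \tfrac{(p+1)^2(p+2)}{2}$, $a_2 = \tfrac{p(p+1)^3(p+2)(p+3)}{8}$, $b_1 = \tfrac{(p-1)p(p+1)}{2}$ and $b_2 = \tfrac{(p-2)(p-1)p(p+1)^3}{8}$.

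I would then expand the reciprocal of the denominator series geometrically, $\bigl(1 + b_1 R^{-1} + b_2 R^{-2} + \cdots\bigr)^{-1} = 1 - b_1 R^{-1} + (b_1^2 - b_2)R^{-2} + O(R^{-3})$, multiply by the numerator series, and collect terms to obtain $N_p(R)/D_p(R) = R^n + (a_1 - b_1)R^{n-1} + \bigl(a_2 - b_2 - b_1(a_1 - b_1)\bigr)R^{n-2} + O(R^{n-3})$. What remains is to check the two polynomial identities $a_1 - b_1 = (p+1)(2p+1)$ and $a_2 - b_2 - b_1(a_1-b_1) = p(p+1)^2(2p+1)$; each is obtained by extracting a common factor (respectively $\tfrac{p+1}{2}$ and $\tfrac{p(p+1)^2(2p+1)}{2}$) and simplifying the remaining quadratic in $p$.

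Finally I would re-express the coefficients in $n$ via $p = \tfrac{n-1}{2}$, using $(p+1)(2p+1) = \tfrac{n(n+1)}{2}$ and $p(p+1)^2(2p+1) = \tfrac{(n-1)n(n+1)^2}{8}$, and divide by $n!$ to reach the stated expansion. There is no real obstacle here: the only thing needing care is keeping the $O(R^{-3})$ error terms correct through the division and performing the two elementary simplifications above, all of which is routine polynomial algebra.
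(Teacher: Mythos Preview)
Your proposal is correct and follows exactly the approach the paper indicates: substitute the leading-term expansions of $N_p$ and $D_p$ from the preceding theorem into $|B^n_R| = N_p(R)/(n!\,D_p(R))$ and perform long division. The paper gives no further detail than that, so your write-up simply fills in the routine algebra the paper leaves to the reader.
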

Gimperlein and Goffeng~\cite{GimperleinGoffeng:MagnitudeFunction} have shown that the first two subleading terms in the asymptotic expansion of the magnitude of a domain in $\R^{2p+1}$ are proportional to, respectively, the volume of the boundary and the total mean curvature of the boundary.  They use the above result to pin down the constants of proportionality.

\subsection{Proof of the combinatorial formulae}
In this section we will prove Theorem~\ref{thm:CombinatorialDeterminant} giving combinatorial formulae for the Hankel determinants of the reverse Bessel polynomials.  

A classical first step in obtaining an interpretation of the Hankel determinants of a sequence is to find a continued fraction expansion of the generating function of the sequence.  Usually, Stieltjes or Jacobi-type continued fraction expansions are used, but when I gave Alan Sokal the reverse Bessel polynomials for considerations he was excited by the fact that this was the first instance of a ``combinatorially interesting'' sequence of polynomials for which there was no Stieltjes-type or Jacobi-type expansion but there was a Thron-type expansion. (See also Barry's formula given at~\cite{OEIS:A001497}.)
\begin{thm}[Sokal~\cite{Sokal:ContinuedFractionsBook}] 
\label{thm:ContinuedFraction}
There is the following Thron-type continued fraction expansion for the generating function of the reverse Bessel polynomials.
\[
 \sum_{i=0}^\infty t^i \chi_i(R) 
 =
 \dfrac{1}{1-
   \dfrac{Rt}{1-
     \dfrac{t}{1-Rt -
       \dfrac{2t}{1-Rt-
         \dfrac{3t}{1-Rt-
           \dfrac{4t}{1-\dots}}}}}}
\]
\end{thm}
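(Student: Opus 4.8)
The plan is to use the standard characterization of a continued fraction of formal power series by its sequence of \emph{tails}, reduce the statement to the classical Thron expansion for the generating function of the ordinary reverse Bessel polynomials, and finish with a short algebraic manipulation. First recall the reduction principle: if $(\delta_k)_{k\ge 1},(c_k)_{k\ge 1}$ are elements of $\mathbb{Q}[R]$ and $(F_k)_{k\ge 0}$ is a sequence in $1+t\,\mathbb{Q}[R][[t]]$ satisfying $F_k=\bigl(1-\delta_{k+1}t-c_{k+1}t\,F_{k+1}\bigr)^{-1}$ for every $k$ --- equivalently the Riccati-type relations $F_k=1+\delta_{k+1}t\,F_k+c_{k+1}t\,F_kF_{k+1}$ --- then $F_0$ equals the Thron fraction with parameters $(\delta_k),(c_k)$, because its convergents converge $t$-adically to $F_0$ (an easy induction on the level, using that dividing by a series with constant term $1$ preserves $t$-adic valuation). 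So it suffices to exhibit such a sequence with $F_0=\sum_i\chi_i(R)t^i$ and, reading off the displayed fraction, $(\delta_k)_{k\ge1}=(0,0,R,R,R,\dots)$, $(c_k)_{k\ge1}=(R,1,2,3,\dots)$.

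Next I would peel off the two exceptional initial steps. From the recursion~\eqref{eq:ChiRecursion} and the values $\chi_0=1$, $\chi_1=R$ one checks at once that the sequence $\chi_0=1$, $\chi_i=R\,\theta_{i-1}(R)$ for $i\ge1$ satisfies~\eqref{eq:ChiRecursion}, where $(\theta_m)_{m\ge0}$ is the classical reverse Bessel polynomial sequence ($\theta_0=1$, $\theta_1=R+1$, and $\theta_{m+1}=(2m+1)\theta_m+R^2\theta_{m-1}$ for $m\ge1$). Hence, writing $\Theta(t):=\sum_{m\ge0}\theta_m(R)t^m$, we have $\sum_i\chi_i(R)t^i=1+Rt\,\Theta(t)$. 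For $\Theta$ there is the classical Thron expansion
\[
\Theta(t)=\cfrac{1}{1-Rt-\cfrac{t}{1-Rt-\cfrac{2t}{1-Rt-\cdots}}},
\]
which is Barry's formula recorded at~\cite{OEIS:A001497}; if one wants a self-contained argument it is proved by the same tail method, identifying the $j$th tail $\Theta_j$ (so that $\Theta_j=(1-Rt-(j+1)t\,\Theta_{j+1})^{-1}$) with the generating function of a family of generalized reverse Bessel polynomials with parameter shifted by $2j$, the Riccati relation then coming from their recurrence.

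With these pieces in hand the conclusion is a computation. Put $E:=\Theta_1$, so $\Theta=(1-Rt-tE)^{-1}$. Then
\[
\sum_i\chi_i(R)t^i=1+Rt\,\Theta=1+\frac{Rt}{1-Rt-tE}=\frac{1-tE}{1-Rt-tE}=\cfrac{1}{1-\cfrac{Rt}{1-tE}},
\]
and since $1-tE=1-t\,\Theta_1=1-\cfrac{t}{1-Rt-\cfrac{2t}{1-Rt-\cdots}}$, the right-hand side is exactly the asserted fraction. Equivalently, this identifies the tails of the claimed fraction as $F_0=\sum_i\chi_i(R)t^i$, $F_1=(1-tE)^{-1}$, and $F_k=\Theta_{k-1}$ for $k\ge2$, and one verifies the three required Riccati relations directly: the $k\ge2$ cases are the $\Theta_j$ relations with $j=k-1$, the $k=1$ case is immediate from $F_1(1-t\Theta_1)=1$, and the $k=0$ case follows because $F_0F_1=(1-Rt-t\Theta_1)^{-1}$.

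The routine parts are the reduction principle, the identity $\chi_i=R\theta_{i-1}$, and the closing algebra. I expect the genuine obstacle to be the honest proof of the $\Theta$-expansion when one does not simply cite it: one must pin down the correct closed form for the tails $\Theta_j$ --- the natural candidate being the generating functions of the parameter-shifted generalized reverse Bessel polynomials --- and then push through an induction that correctly tracks the off-by-one in the shift and the initial conditions; note that the coefficient sequences of the $\Theta_j$ do not satisfy three-term recurrences, which is the structural reason there is no Jacobi or Stieltjes expansion, so the tails must really be described through this shifted family rather than via associated orthogonal polynomials. An alternative to the whole approach would be purely combinatorial: interpret $\chi_n(R)$ as a weighted count of Schr\"oder-type lattice paths and invoke the Flajolet-type fundamental lemma for $T$-fractions; this meshes with the path combinatorics used later in Section~\ref{section:SchroderPaths}, but establishing the single-path interpretation of $\chi_n(R)$ from scratch is itself roughly as much work as the route above.
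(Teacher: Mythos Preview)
Your proposal is correct and, at its core, follows the same strategy as the paper: reduce to a known Thron-type continued fraction for a closely related classical sequence, then transfer it to $\sum_i\chi_i(R)t^i$ by a simple algebraic relation. The paper does this in two sentences, citing Sokal's expansion for the augmented Bessel polynomials $\{Y_{i-1}(x)\}$ and using $\chi_i(R)=R^iY_{i-1}(1/R)$; you instead use $\chi_i(R)=R\,\theta_{i-1}(R)$ with $\theta_m$ the standard reverse Bessel polynomials and cite Barry's formula from~\cite{OEIS:A001497} (which the paper also mentions just before the theorem). Since $\theta_m(R)=R^mY_{m}(1/R)$, these two relations are the same fact, and your intermediate fraction for $\Theta(t)$ is exactly what Sokal's expansion becomes after that substitution.

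What you add beyond the paper is the explicit Riccati-tails framework and the two-step peeling $1+Rt\,\Theta=(1-Rt/(1-tE))^{-1}$, which makes transparent why the first two levels of the fraction have $\delta_1=\delta_2=0$ while all later levels carry $\delta_k=R$. That is a nice clarification: the paper simply asserts that ``on substitution we get the above theorem'' without showing how the top of the tower reorganizes. Your discussion of what a fully self-contained proof of the $\Theta$-expansion would entail (identifying the tails $\Theta_j$ with generating functions of parameter-shifted families) goes further than the paper attempts, but is not required for the theorem as stated, since the paper is content to cite Sokal.
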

\begin{proof} In \cite[Proposition~31.3]{Sokal:ContinuedFractionsBook} Sokal gives a Thron-type continued fraction for the augmented Bessel polynomials $\{Y_{i-1}(x)\}_{i=0}^\infty$, but $\chi_i(R)=R^iY_{i-1}(1/R)$ and on substitution we get the above theorem.
\end{proof}

The next step is to give a path-counting interpretation to the terms in the generating function of a continued fraction expansion.  We will need some notation first. 
%
%
Given sequences $\alpha=(\alpha_1,\alpha_2,\dots)$ and $\delta=(\delta_1,\delta_2,\dots)$ of elements in some commutative ring $\ring$,
let  $W_{\alpha,\delta}$ be the path weighting which assigns $1$ to each ascending step, assigns $\alpha_i$ to each descending step coming down from height $i$ and assigns $\delta_{i+1}$ to each flat step at level $i$.

\begin{thm}[Fusy--Guitter--Oste--Van der Jeugt--Sokal, \cite{FusyGuitter:Quadrangulations, OsteVanderJeugt:MotzkinPaths, Sokal:ContinuedFractionsBook}]
\label{thm:ThronCFEnumeration}
If $(T_i)_{i=0}^\infty$ is a sequence of elements in a commutative ring $\ring$ whose generating function has the following Thron-type continued fraction expansion
\[
 \sum_{i=0}^\infty t^i T_i
 =
 \dfrac{1}{1- \delta_1 t -
   \dfrac{\alpha_1 t}{1-\delta_2 t -
     \dfrac{\alpha_2 t}{1-\delta_3 t -
       \dfrac{\alpha_3 t}{1-\dots}}}}~,
\]
for some
sequences $\alpha=(\alpha_1,\alpha_2,\dots)$ and $\delta=(\delta_1,\delta_2,\dots)$ of elements in the ring $\ring$,
then 
$T_i$ counts the $W_{\alpha,\delta}$--weighted Schr\"oder paths from $(a,0)$ to $(a+2i, 0)$ for any $a$:
\[T_i = \sum_{\gamma\colon (a,0) \rightsquigarrow(a+2i,0)}
W_{\alpha,\delta}(\gamma).\tag*{$\square$}\]
\end{thm}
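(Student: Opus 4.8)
The plan is to run the classical Flajolet-style dictionary between continued fractions and weighted lattice paths, adapted from Motzkin paths to Schr\"oder paths so that the width-two flat steps are exactly what produce the ``$\delta_i t$'' terms characteristic of a Thron-type (as opposed to Jacobi-type) fraction. By invariance of everything under horizontal translation we may take $a=0$.

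First I would pin down the bookkeeping of the variable $t$. A Schr\"oder path from $(0,0)$ to $(2i,0)$ has equally many ascents and descents, say $u$ of each, together with $f$ flat steps, and these satisfy $u+f=i$. Hence if one declares that every ascending step and every flat step carries an extra factor of $t$ while descending steps carry no power of $t$, then each path from $(0,0)$ to $(2i,0)$ acquires total $t$-degree exactly $i$. Writing $W_{\alpha,\delta}^{t}$ for the weighting $W_{\alpha,\delta}$ so enriched, the claim to prove is that $\sum_i T_i t^i$ equals $\sum_{\gamma\colon(0,0)\rightsquigarrow(2i,0)}W_{\alpha,\delta}^{t}(\gamma)$, the sum taken over all such $\gamma$ and all $i\ge 0$.

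Next I would introduce, for each height $h\ge 0$, the series $F_h$ counting (with the weighting $W_{\alpha,\delta}^{t}$) the Schr\"oder \emph{excursions} that start and end at height $h$ and never descend below $h$; the quantity we want is $F_0$. A first-return decomposition of such an excursion --- it is empty; or it begins with a flat step at level $h$ (weight $\delta_{h+1}t$) followed by an excursion at level $h$; or it begins with an ascent (weight $t$), runs through an excursion at level $h+1$, returns via a descent from height $h+1$ (weight $\alpha_{h+1}$), and is completed by an excursion at level $h$ --- yields the functional equation
\[
  F_h = 1 + \delta_{h+1}t\,F_h + \alpha_{h+1}t\,F_{h+1}F_h,
\]
so that $F_h=\bigl(1-\delta_{h+1}t-\alpha_{h+1}t\,F_{h+1}\bigr)^{-1}$. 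Iterating this relation reproduces precisely the displayed Thron-type continued fraction; and because the term coupling $F_h$ to $F_{h+1}$ carries an explicit factor of $t$, the truncation at level $N$ agrees with $F_0$ through order $t^{N}$, so the infinite fraction is a well-defined formal power series over $\ring$ equal to $F_0$. Comparing with the hypothesis that the same fraction equals $\sum_i T_i t^i$ then finishes the proof.

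The two points I expect to require care are the following. First, one must verify that the first-return decomposition really is a bijection in the Schr\"oder setting: since flat steps have width two whereas ascents and descents have width one, one uses the observation (noted just before the definition of disjoint collections) that all vertices lie on the sublattice where the coordinate sum is even, so an excursion genuinely returns to its starting height through a descent and cannot ``skip past'' level $h$ with a flat step. Second, one must treat $F_h$ and the continued fraction as formal power series in $t$ over the commutative ring $\ring$, so that the nested fraction converges in the $t$-adic topology rather than in any analytic sense --- this is exactly what the explicit factor of $t$ next to each $\alpha_{h+1}$ secures. Everything beyond these is routine manipulation, and the result is, as indicated, due to Fusy--Guitter, Oste--Van der Jeugt and Sokal in the cited references.
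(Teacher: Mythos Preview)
Your argument is correct and is the standard Flajolet-style proof of this result. The paper itself does not prove this theorem: it is stated with a terminal $\square$ and attributed to the cited references, so there is no in-paper proof to compare against. Your first-return decomposition, the resulting recursion $F_h=(1-\delta_{h+1}t-\alpha_{h+1}t\,F_{h+1})^{-1}$, and the $t$-adic convergence remark are all sound; the only minor comment is that your parenthetical about the even sublattice is not really needed here, since the relevant point is simply that flat steps do not change height and hence the first return to level $h$ after an ascent must occur via a descent.
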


Clearly in the reverse Bessel polynomials case we are working in the ring $\ring=\Z[R]$ with $\alpha=(R, 1, 2, 3, 4,\dots)$ and $\delta=(0,0,R,R,R,\dots)$.   We will write the corresponding path weighting just as $W_{\mathrm{rB}}$.

Note that it is possible to extract the combinatorial aspects of the proofs of the above two theorems and to give a continued-fraction-free proof  of the fact that the reverse Bessel polynomials have the requisite path counting interpretation.  See the blog post~\cite{Willerton:CafeBessel}.

The next ingredient is the wonderful result relating Hankel determinants to collections of non-intersecting paths in a graph.

\begin{thm}[{Karlin-McGregor-Lindstr\"om-Gessel-Viennot Lemma,  see~\cite[Chapter~29]{AignerZiegler:ProofsFromTheBook}}]
\label{thm:KMLGV}
Let $G$ be a directed acyclic graph and let $\{K_i\}_{i=0}^k$ and $\{L_j\}_{j=0}^k$ be two sets of vertices.  Let $W$ be a weighting on the edges of $G$, taking values in the commutative ring $\ring$.  Let $M_{i,j}$ for $i,j=0, \dots,k$ denote the weighted count of paths from $K_i$ to $L_j$:
\[
M_{i,j}:=\sum_{\gamma\colon K_i\rightsquigarrow L_j} W(\gamma)
\] 
Suppose that every disjoint collection of $k+1$ paths from $\{K_i\}_{i=0}^k$ and $\{L_i\}_{i=0}^k$ must connect vertex $K_i$ to $L_i$ for $i=0, \dots,k$.  Let $X(K_0,\dots,K_k;L_0,\dots,L_k)$ denote all such disjoint collections.  Then the determinant of the matrix $M$ gives the weighted count of disjoint collections of paths:
\[
\det[M_{i,j}]_{i,j=0}^k = \sum_{\sigma\in X(K_0,\dots,K_k;L_0,\dots,L_k)}W(\sigma).
\tag*{$\square$}
\]
\end{thm}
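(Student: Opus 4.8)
The plan is to prove this by the standard sign-reversing involution argument of Lindström and of Gessel–Viennot, using the acyclicity of $G$ to keep the combinatorics clean. First I would expand the determinant by the Leibniz formula,
\[
\det[M_{i,j}]_{i,j=0}^k = \sum_{\pi\in S_{k+1}} \operatorname{sgn}(\pi)\prod_{i=0}^k M_{i,\pi(i)},
\]
then substitute $M_{i,\pi(i)} = \sum_{\gamma\colon K_i\rightsquigarrow L_{\pi(i)}} W(\gamma)$ and use the multiplicativity of $W$ over edges to rewrite the whole thing as a single signed sum over tuples of paths: writing $\Gamma_\pi$ for the set of tuples $(\gamma_0,\dots,\gamma_k)$ with $\gamma_i\colon K_i\rightsquigarrow L_{\pi(i)}$,
\[
\det[M_{i,j}]_{i,j=0}^k = \sum_{\pi\in S_{k+1}}\ \sum_{(\gamma_0,\dots,\gamma_k)\in\Gamma_\pi} \operatorname{sgn}(\pi)\prod_{i=0}^k W(\gamma_i).
\]
I would then partition these tuples into those that are pairwise vertex-disjoint and those in which some vertex lies on at least two of the paths.

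For the pairwise-disjoint tuples, the hypothesis of the theorem forces $\pi=\mathrm{id}$ (any disjoint collection must link $K_i$ to $L_i$), so $\operatorname{sgn}(\pi)=1$, and the sum of their contributions is precisely $\sum_{\sigma\in X(K_0,\dots,K_k;L_0,\dots,L_k)} W(\sigma)$, the claimed right-hand side. So it remains to show the remaining tuples contribute zero, and for this I would build a weight-preserving, sign-reversing involution on them. Fix a topological ordering of the vertices of $G$, which exists because $G$ is acyclic (so in particular every directed walk is a simple path and each path meets any given vertex at most once). Given a non-disjoint tuple, let $v$ be the topologically first vertex lying on at least two of the paths, let $i<j$ be the two smallest indices with $v\in\gamma_i$ and $v\in\gamma_j$, and define a new tuple by interchanging the segments of $\gamma_i$ and $\gamma_j$ strictly beyond $v$ (keeping all other paths). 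The new $i$th path then runs $K_i\rightsquigarrow v\rightsquigarrow L_{\pi(j)}$ and the new $j$th path runs $K_j\rightsquigarrow v\rightsquigarrow L_{\pi(i)}$, so the underlying permutation becomes $\pi\circ(i\,j)$ and the sign flips; the multiset of edges used by the $i$th and $j$th paths together is unchanged, so $\prod_i W(\gamma_i)$ is unchanged; and the new tuple is still non-disjoint, since its $i$th and $j$th paths still share $v$.

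The one step requiring care — and the only real obstacle — is verifying that this map is an involution. The crux is that every vertex visited strictly before $v$ in the topological order is visited by exactly the same set of paths in the new tuple as in the old one, because the two interchanged tail-segments consist only of vertices reachable from $v$, hence all topologically at least $v$. Consequently, for the new tuple the topologically first multiply-covered vertex is again $v$, the set of path indices through $v$ is unchanged so $i$ and $j$ are again the two smallest, and the head-segments up to $v$ were never altered; interchanging the tails a second time therefore reconstructs the original tuple. Once this is checked, the non-disjoint terms cancel in sign-reversing pairs, leaving only the disjoint contribution, which gives the theorem. I would also note that the only ``arithmetic'' inputs are $\operatorname{sgn}(\pi\circ(i\,j)) = -\operatorname{sgn}(\pi)$ and the convention $W(\sigma)=\prod_i W(\gamma_i)$, so the argument goes through verbatim over any commutative ring $\ring$.
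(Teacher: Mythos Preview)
Your argument is correct: this is exactly the classical Lindstr\"om--Gessel--Viennot sign-reversing involution, and you have handled the one delicate point (that the involution really is an involution) carefully by using a topological ordering to pin down the first shared vertex and checking that swapping tails cannot create a new shared vertex earlier in that ordering.

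There is nothing to compare with, however, because the paper does not prove this theorem. It is stated as a well-known result from the literature and marked with a $\square$ in place of a proof; the author then simply applies it in combination with Theorem~\ref{thm:ThronCFEnumeration} to deduce the path-counting interpretation of the Hankel determinants. So you have supplied a full (and standard) proof where the paper gives none.
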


To apply this, we let $G$ be the graph whose vertices are the points in $\Z^2$ and whose edges are the three types of steps in Schr\"oder paths.
Then defining points in $\Z^2$ by $U_i:=(-2i,0)$ and $V_i:=(2i,0)$ for $i\in\{0,1,2,\dots\}$ and combining the above two theorems we obtain the following.

\begin{thm} If $(T_i)_{i=0}^\infty$ is a sequence having a Thron-type continued fraction expansion as in Theorem~\ref{thm:ThronCFEnumeration}   
then the Hankel determinants of the sequence $(T_i)_{i=0}^\infty$ count the $W_{\alpha,\delta}$--weighted disjoint collections of Schr\"oder paths:
\begin{align*}
  \det[T_{i+j}]_{i,j=0}^k &= 
  \sum_{\sigma\in X(U_0,\dots,U_k;V_0,\dots,V_k)} W_{\alpha,\delta}(\sigma)
  \\
  \intertext{and}
  \det[T_{i+j+2}]_{i,j=0}^k &= 
  \sum_{\sigma\in X(U_1,\dots,U_{k+1};V_1,\dots,V_{k+1})} W_{\alpha,\delta}(\sigma).
\tag*{$\square$}
\end{align*} 
\end{thm}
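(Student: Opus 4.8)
The plan is to deduce both identities by composing the two preceding theorems. First I would fix the directed acyclic graph $G$ on the lattice points with $y\ge 0$ whose edges are the ascents, descents and flat steps, weighting an edge by $1$ if it is an ascent, by $\delta_{i+1}$ if it is a flat step at level $i$, and by $\alpha_i$ if it is a descent leaving height $i$; note that $W_{\alpha,\delta}$ is only defined on such upper-half-plane paths, since a descent leaving height $0$ has no assigned weight (the sequence $\alpha$ starts at $\alpha_1$). With this edge weighting the $W$-weight of a path equals $W_{\alpha,\delta}$ of it, and, more to the point, the $W$-weighted count of directed paths from $U_a=(-2a,0)$ to $V_b=(2b,0)$ is $T_{a+b}$: such a path has horizontal length $2(a+b)$, so this is exactly Theorem~\ref{thm:ThronCFEnumeration}, since that theorem allows an arbitrary base point. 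Taking $K_i=U_i$ and $L_j=V_j$ therefore gives $M_{i,j}=T_{i+j}$, while taking $K_i=U_{i+1}$ and $L_j=V_{j+1}$ gives $M_{i,j}=T_{i+j+2}$.

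The only step requiring real argument is the hypothesis of Theorem~\ref{thm:KMLGV}, namely that every vertex-disjoint collection of $k+1$ paths from the chosen sources to the chosen sinks connects $K_i$ to $L_i$. Here is how I would establish it. Each step of a Schr\"oder path strictly increases the $x$-coordinate, so every path in the collection is an $x$-monotone arc in $\{y\ge 0\}$ joining a source to a sink. Because all the vertices in sight lie on the sublattice where the coordinate sum is even and every step has horizontal extent at most $2$, two such arcs cannot cross at a non-lattice point (this is the content of the parenthetical remark about the even sublattice), so two arcs that cross must share a vertex; hence a vertex-disjoint collection is a family of pairwise non-crossing arcs. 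Along the $x$-axis the relevant points occur in the order $U_k,\dots,U_1,U_0,V_0,V_1,\dots,V_k$ for the first configuration and $U_{k+1},\dots,U_1,V_1,\dots,V_{k+1}$ for the second, that is, all sources followed by all sinks; the unique perfect matching between the source block and the sink block that can be realized by non-crossing arcs in the upper half-plane is the rainbow matching, which pairs $U_i$ with $V_i$ in both cases. In the first configuration one has to check in addition that the coincidence $U_0=V_0=(0,0)$ does no harm: the arc matching $U_0$ to $V_0$ is then the trivial path at the origin, and the remaining arcs of the rainbow matching can be, and must be, routed strictly above it, so disjointness is not violated. Thus in both configurations every disjoint collection matches $K_i$ to $L_i$, as required, and since this is the identity matching on the index set no sign intervenes.

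With the hypothesis verified, Theorem~\ref{thm:KMLGV} applied to the two configurations yields
\[
\det[M_{i,j}]_{i,j=0}^k=\sum_{\sigma\in X(U_0,\dots,U_k;V_0,\dots,V_k)}W(\sigma),
\qquad
\det[M_{i,j}]_{i,j=0}^k=\sum_{\sigma\in X(U_1,\dots,U_{k+1};V_1,\dots,V_{k+1})}W(\sigma),
\]
and inserting the identifications $M_{i,j}=T_{i+j}$ and $M_{i,j}=T_{i+j+2}$ from the first paragraph, together with the fact that $W$ of a collection is $W_{\alpha,\delta}$ of it, gives the two asserted formulas. Everything except the non-crossing/rainbow-matching verification is just bookkeeping with horizontal lengths and unwinding the definitions, so I expect that verification — essentially the observation, already flagged in the excerpt, that disjointness on the even sublattice forces the rainbow matching — to be the main (and only mildly technical) obstacle.
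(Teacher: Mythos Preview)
Your proposal is correct and follows exactly the approach the paper intends: the theorem is stated with a $\square$ because it is meant to be an immediate combination of Theorem~\ref{thm:ThronCFEnumeration} (giving $M_{i,j}=T_{i+j}$ or $T_{i+j+2}$) with Theorem~\ref{thm:KMLGV}, applied to the Schr\"oder-path graph with sources $U_i$ and sinks $V_j$. You have simply made explicit what the paper leaves to the reader, in particular the verification of the unpermuted hypothesis via the non-crossing/rainbow-matching argument on the even sublattice and the handling of the degenerate source-sink coincidence $U_0=V_0$.
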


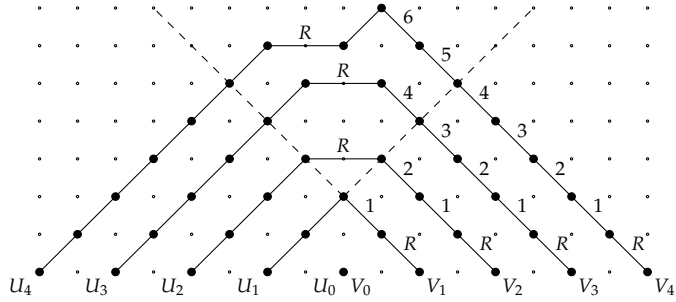
\begin{figure}
\begin{center}
\begin{tikzpicture}[scale=0.5]\scriptsize
    \foreach \i in {-8,...,8}
      \foreach \j in {-2,...,5}
        \draw (\i,\j) circle[radius=0.03];
\foreach \i in {0,...,4}{
  \node[below left] at (-2*\i, -2) {$U_{\i}$};
  \node[below right] at (2*\i, -2) {$V_{\i}$};
  };
\tikzstyle{every circle}=[radius=0.1]
\draw [draw=black, dashed, thin] (0,0) -- (5,5);
\draw [draw=black, dashed, thin] (0,0) -- (-5,5);
\filldraw [] (0,-2) circle;
\filldraw [] (-2, -2) circle -- (-1,-1) circle -- (0, 0) circle [radius=0.1] -- node[above right=-0.2em]{$1$} ++(1,-1) circle -- node[above right=-0.2em]{$R$} ++(1,-1) circle ;
\filldraw [] (-4,-2) circle -- ++(1,1) circle -- ++(1,1) circle -- (-1,1) circle [radius=0.1] -- node[above ]{$R$} 
++(2,0) circle[radius=0.1] -- node[above right=-0.2em]{$2$} ++(1,-1) circle  -- node[above right=-0.2em]{$1$} ++(1,-1) circle -- node[above right=-0.2em]{$R$} ++(1,-1) circle ;
\filldraw []  (-6,-2) circle -- ++(1,1) circle -- ++(1,1) circle -- ++(1,1) circle -- (-2,2) circle [radius=0.1] -- ++(1, 1) circle [radius=0.1, fill=red]--  node[above]{$R$} ++(2,0) circle[radius=0.1] -- node[above right=-0.2em]{$4$}  ++(1,-1) circle[radius=0.1] -- node[above right=-0.2em]{$3$} ++(1,-1) circle  -- node[above right=-0.2em]{$2$} ++(1,-1) circle  -- node[above right=-0.2em]{$1$} ++(1,-1) circle -- node[above right=-0.2em]{$R$} ++(1,-1) circle;
\filldraw [](-8,-2) circle -- ++(1,1) circle -- ++(1,1) circle -- ++(1,1) circle -- ++(1,1) circle -- (-3,3) circle [radius=0.1] -- ++(1, 1) circle [radius=0.1, fill=red]-- node[above]{$R$} ++(2,0) circle[radius=0.1] -- ++(1,1) circle[radius=0.1] -- node[above right=-0.2em]{$6$} ++(1,-1) circle[radius=0.1] -- node[above right=-0.2em]{$5$} ++(1,-1) circle[radius=0.1] -- node[above right=-0.2em]{$4$}  ++(1,-1) circle[radius=0.1] -- node[above right=-0.2em]{$3$} ++(1,-1) circle  -- node[above right=-0.2em]{$2$} ++(1,-1) circle  -- node[above right=-0.2em]{$1$} ++(1,-1) circle -- node[above right=-0.2em]{$R$} ++(1,-1) circle;
\end{tikzpicture}
\end{center}
\caption{A disjoint collection $\sigma$ in $X(U_0,\dots,U_4;V_0,\dots,V_4)$ with the path weighting $W_{\mathrm{rB}}$ marked on.  The dashed lines mark the region in which  the collection can do something interesting.}
\label{figure:Extended3Collection}
\end{figure}

We now apply this theorem to the reverse Bessel polynomials, to obtain combinatorial formulae for their Hankel determinants.  However, the formulae given in the above theorem are not quite those in Theorem~\ref{thm:CombinatorialDeterminant} as they do not have the factor taken out and the end-points are incorrect.  

Let's look first at $\det[\chi_{i+j}(R)]_{i,j=0}^p$.  By Theorem~\ref{thm:CombinatorialDeterminant}, this involves a sum over disjoint collections in $X(U_0,\dots,U_p;V_0,\dots,V_p)$ weighted by $W_{\mathrm{rB}}$.  Consider the example given in Figure~\ref{figure:Extended3Collection}.  This is typical in that everything outside the marked sector is fixed, one could have a flat step from $(-1,1)$ to $(1,1)$ but as $\delta_2=0$ this would carry a weight of $0$ and thus this collection would be ignored in the sum.  It is clear from the picture that the fixed part outside the marked sector contributes a factor of $R^p\super(p)$ to the weighting of the collection.  The part inside the marked sector can be moved down so that the apex is at the origin, then it becomes precisely a $(p-1)$-collection in $X_{p-1}$, and in this new position the path weighting is precisely $W_0$.  Thus
\[
   \det\left[\chi_{i+j}(R)\right]_{i,j=0}^p
  =
  R^p\super(p)\sum_{\sigma\in X_{p-1}}W_0(\sigma, R).
\]
as required.

The case of $\det[\chi_{i+j+2}(R)]_{i,j=0}^p$ is similar but involves one subtlety.  We have
\[
\det[\chi_{i+j+2}]_{i,j=0}^k = 
  \sum_{\sigma\in X(U_1,\dots,U_{k+1};V_1,\dots,V_{k+1})} W_{\textrm{rB}}(\sigma).
\]
We are going to take the region marked by the dashed lines in Figure~\ref{figure:NumeratorCollection}.    There are two things to consider which essentially cancel out.   Firstly, there is no vertex at $(0,0)$.  In theory we could have a horizontal path from $U_1$ to $V_1$, but the weighting assigns $0$ to such flat steps at height $0$, so we can ignore these.  However, we can have a path which dips down to $(0,0)$ as in the left hand picture of Figure~\ref{figure:NumeratorCollection}.  This carries a weight of $R$ so cannot be ignored.  Secondly, any flat step from $(-1,1)$ to $(1,1)$ carries a weight of $0$ so will be ignored in the count.  However, we have an involution on the collections in $X(U_1,\dots,U_{k+1};V_1,\dots,V_{k+1})$ which replaces a dip down to $(0,0)$ by a flat step from $(-1,1)$ to $(1,1)$ and vice versa, such that the weighting $W_\textrm{rB}$ gets replaced by the weighting in which the dip is weighted by zero and the flat step from $(-1,1)$ to $(1,1)$ is weighted by $R$.  (For example, we go from the weighted collection on the left hand side of Figure~\ref{figure:NumeratorCollection} to the one on the right hand side.)  As above we can take out as a factor all of the weights appearing outside the marked sector as these will appear in all diagrams.  On the other hand, the weighting inside the marked region is $W_2$.  This gives us
\[
  \det\left[\chi_{i+j+2}(R)\right]_{i,j=0}^p
  =
  R^{p+1}\super(p)\sum_{\sigma\in X_{p+1}}W_2(\sigma, R)
\]
as required.

\begin{figure}
\begin{center}
\begin{tikzpicture}[scale=0.4]\scriptsize
    \foreach \i in {-6,...,6}
      \foreach \j in {-2,...,3}
        \draw (\i,\j) circle[radius=0.03];
\foreach \i in {1,...,3}{
  \node[below left] at (-2*\i, -2) {$U_{\i}$};
  \node[below right] at (2*\i, -2) {$V_{\i}$};
  };
\tikzstyle{every circle}=[radius=0.1]
\draw [draw=black, dashed, thin] (0,-2) -- (5,3);
\draw [draw=black, dashed, thin] (0,-2) -- (-5,3);
\filldraw [] (-2, -2) circle -- (-1,-1) circle --  node[above right=-0.2em]{$R$}  ++(1,-1) circle[radius=0.1]  --  ++(1,1) circle -- node[above right=-0.2em]{$R$} ++(1,-1) circle ;
\filldraw [] (-4,-2) circle -- ++(1,1) circle -- ++(1,1) circle -- (-1,1) circle [radius=0.1] --  node[above right=-0.2em]{$2$}  ++(1,-1) circle[radius=0.1]  --  ++(1,1) circle -- node[above right=-0.2em]{$2$} ++(1,-1) circle  -- node[above right=-0.2em]{$1$} ++(1,-1) circle -- node[above right=-0.2em]{$R$} ++(1,-1) circle ;
\filldraw []  (-6,-2) circle -- ++(1,1) circle -- ++(1,1) circle -- ++(1,1) circle -- (-2,2) circle [radius=0.1] -- node[above]{$R$} ++(2,0) circle[radius=0.1] -- ++(1, 1) circle [radius=0.1, fill=red]--   node[above right=-0.2em]{$4$}  ++(1,-1) circle[radius=0.1] -- node[above right=-0.2em]{$3$} ++(1,-1) circle  -- node[above right=-0.2em]{$2$} ++(1,-1) circle  -- node[above right=-0.2em]{$1$} ++(1,-1) circle -- node[above right=-0.2em]{$R$} ++(1,-1) circle;
\end{tikzpicture}
\quad
\begin{tikzpicture}[scale=0.4]\scriptsize
    \foreach \i in {-6,...,6}
      \foreach \j in {-2,...,3}
        \draw (\i,\j) circle[radius=0.03];
\foreach \i in {1,...,3}{
  \node[below left] at (-2*\i, -2) {$U_{\i}$};
  \node[below right] at (2*\i, -2) {$V_{\i}$};
  };
\tikzstyle{every circle}=[radius=0.1]
\draw [draw=black, dashed, thin] (0,-2) -- (5,3);
\draw [draw=black, dashed, thin] (0,-2) -- (-5,3);
\filldraw [] (0,-2) circle;
\filldraw [] (-2, -2) circle -- (-1,-1) circle --  node[above]{$R$}  ++(2,0) circle[radius=0.1]  --  node[above right=-0.2em]{$R$} ++(1,-1) circle ;
\filldraw [] (-4,-2) circle -- ++(1,1) circle -- ++(1,1) circle -- (-1,1) circle [radius=0.1] --  node[above right=-0.2em]{$2$}  ++(1,-1) circle[radius=0.1]  --  ++(1,1) circle -- node[above right=-0.2em]{$2$} ++(1,-1) circle  -- node[above right=-0.2em]{$1$} ++(1,-1) circle -- node[above right=-0.2em]{$R$} ++(1,-1) circle ;
\filldraw []  (-6,-2) circle -- ++(1,1) circle -- ++(1,1) circle -- ++(1,1) circle -- (-2,2) circle [radius=0.1] -- node[above]{$R$} ++(2,0) circle[radius=0.1] -- ++(1, 1) circle [radius=0.1, fill=red]--   node[above right=-0.2em]{$4$}  ++(1,-1) circle[radius=0.1] -- node[above right=-0.2em]{$3$} ++(1,-1) circle  -- node[above right=-0.2em]{$2$} ++(1,-1) circle  -- node[above right=-0.2em]{$1$} ++(1,-1) circle -- node[above right=-0.2em]{$R$} ++(1,-1) circle;
\end{tikzpicture}
\end{center}
\caption{On the left, a disjoint collection $\sigma$ in $X(U_1, U_2 ,U_3;V_1, V_2 ,V_3)$, which includes a dip down to the origin, with the path weighting $W_{\mathrm{rB}}$ marked on.  On the right, the dip is replaced with a flat step with the alternative weighting (essentially $W_2$) marked on it.  The dashed lines mark the region in which collections can do something non-trivial.}
\label{figure:NumeratorCollection}
\end{figure}
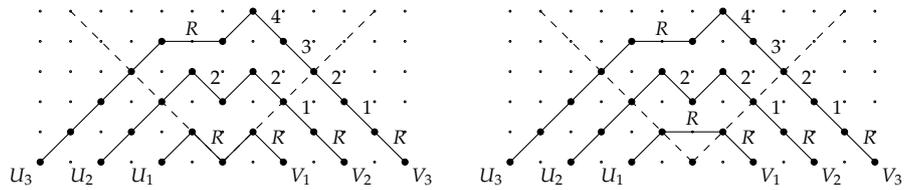

Thus Theorem~\ref{thm:CombinatorialDeterminant} is proved, and we have proved all the combinatorial facts we wanted to prove about the magnitude of odd balls.

\section*{Acknowledgments} 
I would like to thank the developers of SageMath, without which I would not have been able to guess many of the formulae that I have proved here; Sam Dolan for spotting a useful identity in the literature; Tom Leinster for many conversations; Mark Meckes for considerable help with understanding Bessel potential spaces; Dave Applebaum for a helpful chat;  Alan Sokal for finding the continued fraction expansion and letting me see his unfinished manuscript; Martin Lotz for mentioning log-concavity during a seminar; Heiko Gimperlein and Magnus Goffeng for some comments on a draft of this paper;  Michael Renardy for answering my question on MathOverflow~\cite{Renardy:Mathoverflow}; and the anonymous referee for helpful comments on the paper.

\bibliographystyle{amsplain}


\begin{dajauthors}
\begin{authorinfo}[simon]
  Simon Willerton,\\
  Senior Lecturer,\\
  School of Mathematics and Statistics,\\
  University of Sheffield,\\
  Sheffield, UK.\\
  s.willerton\imageat{}sheffield\imagedot{}ac\imagedot{}uk\\
  \url{http://www.simonwillerton.staff.shef.ac.uk/}
\end{authorinfo}
\end{dajauthors}

\end{document}